\DeclareRobustCommand{\cev}[1]{%
  {\mathpalette\do@cev{#1}}%
}
\newcommand{\do@cev}[2]{%
  \vbox{\offinterlineskip
    \sbox\z@{$\m@th#1 x$}%
    \ialign{##\cr
      \hidewidth\reflectbox{$\m@th#1\vec{}\mkern4mu$}\hidewidth\cr
      \noalign{\kern-\ht\z@}
      $\m@th#1#2$\cr
    }%
  }%
}
\theoremstyle{plain}
\newtheorem{thm}{Theorem}[section]
\newtheorem{lem}[thm]{Lemma}
\newtheorem{prop}[thm]{Proposition}
\newtheorem{cor}[thm]{Corollary}
\theoremstyle{definition}
\newtheorem{defn}[thm]{Definition}
\theoremstyle{remark}
\newtheorem{rem}[thm]{Remark}
\theoremstyle{definition}
\newtheorem{ex}[thm]{Example}
\newtheorem{nt}[thm]{Notation}
\newcommand{\ZZ}{\mathbb{Z}}
\renewcommand{\SS}{\mathbb{S}}
\newcommand{\lag}{\langle}
\newcommand{\rag}{\rangle}
\newcommand{\hh}{,\hdots,}
\newcommand{\fa}{\forall}
\newcommand{\mM}{\mathscr{M}}
\newcommand{\cG}{\mathcal{G}}
\newcommand*{\shom}{\mathscr{H}\kern -.5pt om}
\newcommand{\cF}{\mathcal{F}}
\newcommand{\op}{\oplus}
\newcommand{\cd}{\cdot}
\let \op=\oplus
\let \ov=\overline
\numberwithin{equation}{section}
\DeclareMathOperator\rk{rank}
\DeclareMathOperator\Gr{Gr}
\DeclareMathOperator\GL{GL}
\DeclareMathOperator\PGL{PGL}
\DeclareMathOperator\id{id}
\DeclareMathOperator\End{End}
\DeclareMathOperator\aut{Aut}
\DeclareMathOperator\s{span}
\DeclareMathOperator\spec{Spec}
\DeclareMathOperator\SL{SL}
\DeclareMathOperator\h{Hom}
\DeclareRobustCommand{\SkipTocEntry}[5]{}
\begin{document}
\title{Degenerations of Grassmannians via lattice configurations}
\author{Xiang He, Naizhen Zhang}
\begin{abstract}
We study degenerations of Grassmannians constructed using convex lattice configurations in Bruhat-Tits buildings. Using techniques from quiver representations, we analyze their special fibers, which are explicitly described as quiver Grassmannians. For a class of lattice configurations, called the locally linearly independent configurations, we show that our construction coincide with Mustafin degenerations, thus generalizing a result of Faltings. In such cases, our analysis of special fibers also generalizes results of Cartwright et al. As an application, we prove a smoothing criterion for limit linear series on arbitrary reducible nodal curves.
\end{abstract}
\address[Xiang He]{Einstein Institute of Mathematics, Edmond J. Safra Campus, The Hebrew University of Jerusalem,
Givat Ram. Jerusalem, 9190401, Israel}
\email{xiang.he@mail.huji.ac.il}
\thanks{Xiang He is supported by the ERC Consolidator Grant 770922 - BirNonArchGeom. Naizhen Zhang is supported by the DFG Priority Programme 2026 ``Geometry at infinity" and was supported by the Methusalem Project Pure 
Mathematics at KU Leuven during the preparation of an earlier version of this work.}
\address[Naizhen Zhang]{Institut f\"ur Differentialgeometrie, Leibniz Universit\"at Hannover, Welfengarten 1, 30167 Hannover, Germany}
\email{naizhen.zhang@math.uni-hannover.de}
\maketitle
\tableofcontents

\section{Introduction}
In this paper, we introduce a class of projective schemes called \textit{linked flag schemes} (Definition \ref{defn:linked flag schemes}). They are projective schemes representing some moduli functors defined using convex lattice configurations in Bruhat-Tits buildings and produce flat degenerations in various situations. The idea to consider such constructions dates back to Mustafin (\cite{mustafin1978}) who, based on an earlier work of Mumford (\cite{mumford1972analytic}), studied certain flat degenerations of projective spaces \textemdash\  now known as Mustafin varieties (see \cite{cartwright2011mustafin}), in order to study non-archimedean uniformization of higher-dimensional varieties. Following a result of Faltings' (\cite{faltings2001toroidal}), Mustafin varieties are special cases of linked flag schemes, and this description gives a moduli interpretation for them. Much later on, H\"abich (\cite{habich2014mustafin}) generalizes the original definition of Mustafin in order to produce flat degenerations of classical flag varieties. However, it was not known whether Faltings' moduli interpretation for Mustafin varieties generalizes to this generality. It is thus very natural for us to study the problem from the opposite direction, i.e. if we consider schemes representing the natural generalization of Faltings' functors, when do they exhibit nice properties (eg. flatness, integrality, Cohen-Macaulayness)? 

\medskip 
The current paper focuses on a special case of linked flag schemes, namely \textit{linked Grassmannians}, which will be denoted by $LG_r(\Gamma)$, where $r$ is the dimension of the subspace and $\Gamma$ is a convex lattice configuration. We restrict our base schemes to be spectra of discrete valuation rings. 

\medskip
Historically, Osserman first came up with the notion of linked Grassmannians in \cite{Olls}, where his main motivation was to provide a more functorial construction of the moduli of degenerations of linear series on smooth projective curves, a.k.a. the moduli of \textit{limit linear series}. Although formulated differently, Osserman's linked Grassmannians are relative quiver Grassmannians over a base scheme, which is allowed to be an arbitrary integral, Cohen-Macaulay scheme, with respect to some desired ambient representations of the double quiver of a type-$A_n$ quiver. Later in \cite{Ohrk}, Osserman generalized this notion by allowing arbitrary underlying connected quivers, and called the new objects \textit{pre-linked Grassmannian}. However, unlike in the original version, these objects are less well-behaved in general; for example, they are not always flat over the base.\footnote{It is also not known how to impose further restrictions on the ambient representations so that the resulting pre-linked Grassmannians become flat.} Throughout this paper, we shall refer to Osserman's original definition of linked Grassmannian as \textit{Osserman's linked Grassmannian}; the terminology \textit{pre-linked Grassmannian} will also be reserved for Osserman's construction to avoid confusion.

\medskip 
Inspired by an earlier work of Hahn and Li (\cite{hahn2020mustafin}), which connects the study of Osserman's linked Grassmannian with classical Mustafin varieties and local models of Shimura varieties, our current formulation of linked Grassmannians has a couple of advantages. First of all, one may apply ingredients in Bruhat-Tits theory and tropical geometry to talk about structures which are otherwise harder to describe. Secondly, it shows a natural connection with quiver representations and thus allows for new techniques which weren't considered by Osserman or Hahn and Li. 
More precisely, for each convex lattice configuration $\Gamma$, the special fiber $LG_r(\Gamma)_0$ of the linked Grassmannian $LG_r(\Gamma)$ is a quiver Grassmannian associated to a finite quiver $(Q(\Gamma),J_{\Gamma})$ with relations induced from $\Gamma$, which parametrizes subrepresentations of a representation $M_{\Gamma}$ of $(Q(\Gamma),J_\Gamma)$. This allows us to translate many problems about the geometry of $LG_r(\Gamma)_0$ into constructing or classifying subrepresentations of $M_{\Gamma}$, eventually generalizing various results in the literature. Our main theorem is the following:
\medskip

\begin{thm}[Proposition \ref{prop:linked degeneration stratification}, Theorem \ref{thm:linked degeneration stratification}, \ref{thm:flatness}]\label{thm:intro 1}
Let $\Gamma$ be a locally linearly independent lattice configuration (Definition \ref{defn:locally linearly independent}).  
We obtain a stratification of $LG_r(\Gamma)_0$ indexed by subrepresentations of $M_{\Gamma}$ (which we completely classify). We describe concretely the closure of each stratum, and hence classify the irreducible components of $LG_r(\Gamma)_0$.

As a result, we show that $LG_r(\Gamma)$ is Cohen-Macaulay, integral, and flat over the base with reduced fibers. In particular, it 
coincides with the Mustafin degeneration.
\end{thm}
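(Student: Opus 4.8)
The plan is to exploit the quiver-theoretic description of the special fiber $LG_r(\Gamma)_0$ as a quiver Grassmannian for $(Q(\Gamma),J_\Gamma)$ parametrizing subrepresentations of $M_\Gamma$, and to leverage local linear independence to get a very explicit combinatorial handle on these subrepresentations. First I would set up the stratification: to each point of $LG_r(\Gamma)_0$, i.e. each subrepresentation $N \subseteq M_\Gamma$, one associates the tuple of dimension vectors of $N$ at each vertex of $Q(\Gamma)$ (equivalently, the ranks of $N$ at each lattice of the configuration). The locally linearly independent hypothesis should force, at each vertex, that the subspace is spanned by a subset of a distinguished basis coming from the lattice (this is where Definition~\ref{defn:locally linearly independent} does its work — locally the incident lattices behave like coordinate lattices), so the combinatorial type of $N$ is captured by a tuple of subsets satisfying compatibility conditions dictated by the relations $J_\Gamma$. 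The strata are then the loci of fixed combinatorial type; I would show each stratum is locally closed, irreducible (it fibers over a product of honest Grassmannians or is even a torus/affine-space bundle over a point after trivializing), and compute its dimension.

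Next I would classify subrepresentations and describe closures. The key step is to show that every subrepresentation of $M_\Gamma$ deforms — within $LG_r(\Gamma)_0$ — to one of a combinatorial (``coordinate'') type, and conversely that specialization of combinatorial types is governed by a partial order on the indexing set (refinement/degeneration of the subset-tuples). Concretely, I would produce explicit one-parameter families inside $LG_r(\Gamma)_0$ realizing each covering relation in this poset, which simultaneously shows the closure of a stratum is the union of the strata below it and identifies the maximal strata, hence the irreducible components, as those combinatorial types that are maximal in the poset. From the list of maximal types one reads off that each component has the expected dimension $r(n-r)$ (or the appropriate product form), matching the generic fiber.

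For the global statement over the DVR base, I would argue as follows. The generic fiber of $LG_r(\Gamma)$ is a product of ordinary Grassmannians (all lattices become the same vector space after inverting the uniformizer), in particular smooth, irreducible, Cohen-Macaulay of the expected dimension. From the stratification I get that $LG_r(\Gamma)_0$ is reduced (each stratum is reduced and generically reduced suffices once we know there's no embedded structure — this needs the Cohen-Macaulay/$S_2$ input) and equidimensional of the same dimension as the generic fiber. Flatness over the DVR then follows from the standard criterion: a scheme over a DVR with no embedded components in the special fiber and with special fiber of the same dimension as the generic fiber is flat, provided the total space is, say, Cohen-Macaulay — so I would prove Cohen-Macaulayness of $LG_r(\Gamma)$ directly, e.g. by exhibiting it étale-locally as cut out by the expected number of equations (a regular sequence) inside a product of Grassmannian bundles, using the explicit local equations of the linked-Grassmannian conditions together with local linear independence to verify the codimension count. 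Integrality then follows: flat with integral generic fiber and reduced, connected special fiber. Finally, to conclude that $LG_r(\Gamma)$ coincides with the Mustafin degeneration, I would invoke the comparison already available in the locally linearly independent case (the generalization of Faltings' result, cf.\ the discussion in the introduction): both are flat, proper families over the DVR with the same generic fiber inside the same ambient projective bundle, and the Mustafin degeneration is defined as a flat closure, so they agree by separatedness/uniqueness of flat limits once we know $LG_r(\Gamma)$ is flat with the correct generic fiber.

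The main obstacle I anticipate is the Cohen-Macaulayness and the attendant codimension/regular-sequence count: bounding the dimension of $LG_r(\Gamma)_0$ from above by the expected value (so that the linked conditions cut down the ambient product of Grassmannians by exactly the expected codimension, with no excess components) is precisely the point where the combinatorics of locally linearly independent configurations must be used in an essential and somewhat delicate way. Everything else — constructing the strata, the deformation families realizing the poset order, reducedness of individual strata — is comparatively mechanical once the classification of subrepresentations of $M_\Gamma$ is in hand.
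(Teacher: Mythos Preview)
Your proposal has the right general shape but contains several genuine errors and one critical missing idea.

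\textbf{Stratification index.} You propose indexing strata by ``the tuple of dimension vectors of $N$ at each vertex''; but for $N\in LG_r(\Gamma)_0$ these are all equal to $r$. The actual invariant the paper uses is the tuple of ranks $(\dim f_{\vec e}(U_{s(\vec e)}))_{\vec e}$ along the oriented edges of the underlying tree, which by the classification of indecomposables (each subrepresentation decomposes as $\bigoplus_v P_v^{r_v}\oplus\bigoplus_{\vec e} R_{\vec e}^{r_{\vec e}}$) determines the isomorphism type. Relatedly, your claim that local linear independence forces each subspace to be ``spanned by a subset of a distinguished basis'' is false: a general point of a stratum is \emph{not} a coordinate subspace. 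What local linear independence does buy is (i) that $Q(\Gamma)$ is a double tree, and (ii) the decomposition of subrepresentations into the explicit indecomposables $P_v$ and $R_{\vec e}$; the strata are then isomorphism classes, not fixed-subset loci.

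\textbf{Irreducibility and the logical order.} The paper does \emph{not} prove irreducibility of $LG_r(\Gamma)$ as a consequence of Cohen--Macaulayness and flatness; the order is the reverse. The key step you are missing is that the direct-sum map
\[
\prod_{i=1}^r LG_1(\Gamma)\ \dashrightarrow\ LG_r(\Gamma)
\]
is dominant (because every $\mathbf r$-dimensional subrepresentation of $M_\Gamma$ is a direct sum of $\mathbf 1$-dimensional ones), and $LG_1(\Gamma)$ is already known to be irreducible by Faltings' theorem (Theorem~\ref{thm:Faltings}); hence $LG_r(\Gamma)$ is irreducible. Only then does one prove that the special fiber is Cohen--Macaulay by induction on $|\Gamma|$ (removing a leaf and realizing $\Gr(\mathbf r,M_\Gamma)$ as a local complete intersection in a product of two smaller quiver Grassmannians), and generically reduced (since simple/projective points are dense), hence reduced; flatness and reducedness of the total space then follow from irreducibility via a standard lemma, and Cohen--Macaulayness of the total space from that of the fiber. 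Your proposed route --- CM $\Rightarrow$ flat $\Rightarrow$ integral --- is circular at the CM step, and also misstates the generic fiber (it is a \emph{single} Grassmannian, the diagonal $\Gr(r,V)$, not a product). Finally, the ``comparison already available'' you invoke at the end does not exist: equality with the Mustafin degeneration is exactly what is being proved, and it follows only \emph{after} flatness and integrality of $LG_r(\Gamma)$ are established, since $M_r(\Gamma)$ is by definition the flat closure of the same generic fiber.
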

Our theorem generalizes Faltings' moduli interpretation for classical Mustafin varieties to a larger class of Mustafin degenerations. Moreover, our stratification of the special fiber and classification of its irreducible components extends the corresponding results in \cite{cartwright2011mustafin} for $r=1$. As a byproduct, we also obtained a classification of the irreducible components of Osserman's linked Grassmannian over an algebraically closed field (\ref{ex:olink_irr}), thus answering an open question in \cite{Olls}. As already pointed out by Hahn and Li (\cite[\textsection 2.6]{hahn2020mustafin}), the standard local model of Shimura varieties in \cite{gortz2001flatness} can be interpreted as linked Grassmannians for some specific convex lattice configurations, which are in general not locally linearly independent. 
In \cite{gortz2001flatness}, G\"ortz showed that such linked Grassmannians are flat over the base, with reduced fibers, and the irreducible components are normal with rational singularities. Although considering different lattice configurations and proved using distinct methods, our main theorem above can be seen as analogous to the main results of \textit{loc.cit.} 
\medskip

Besides the theoretical interest of its own right, our construction has direct application to the study of limit linear series. The theory of limit linear series was invented by Eisenbud and Harris \cite{EHL} to prove several results in Brill-Noether theory, while providing a simplified proof of the celebrated Brill-Noether Theorem. It turns out to be a powerful tool for studying the geometry of algebraic curves. For example, Eisenbud and Harris used it to establish the birational type of moduli spaces of curves $\mathcal M_g$ ($g\ge 24$) (\cite{harris1982kodaira,EHgen}). It was originally defined for curves of compact type, i.e. curves whose dual graph is a tree, and later generalized by Osserman (\cite{osserman2019limit}) to all nodal curves. Just as the moduli of linear series on a smooth projective curve are closed subschemes of Grassmannians, the construction of moduli of Osserman-style limit linear series involves linked Grassmannians. More precisely, Osserman defines auxiliary objects called \textit{linked linear series} (\cite{Ohrk}), whose moduli schemes we now know are (DVR-locally) closed subschemes of linked Grassmannians (Theorem \ref{thm:intro2}) and admit natural forgetful maps to moduli of limit linear series. 

\medskip
The classic smoothing theorem of Eisenbud and Harris (Theorem 3.4, \cite{EHL}) states that if the moduli space of limit linear series on a compact-type curve is dimensionally proper, then every limit linear series can be realized as the limit of linear series on nearby smooth curves. This was later extended by Osserman using his theory of linked Grassmannian, to curves of \textit{pseudo-compact type}, i.e. curves whose dual graphs  contain no cycle of length at least 3. 

\medskip
However, to generalize Osserman's result to other non-compact type curves, one has to consider more general linked Grassmannians. Very often, knowing the geometric properties (e.g. irreducibility) of the involved linked Grassmannians is crucial. 
In that regard, we have\footnote{For technical precision, see Section \ref{subsec: limit linear series}.}:
\begin{thm}[Proposition \ref{prop:moduli of lls and linked flags}, Theorem \ref{thm:smoothing of lls}]\label{thm:intro2}
For an arbitrary nodal curve, the moduli of linked linear series is a closed subscheme of a scheme projective over the base which is DVR-locally a linked Grassmannian associated to a convex lattice configuration.  

Moreover, when the aforementioned linked Grassmannians satisfy certain conditions, (e.g. when they are all locally linearly independent as in Theorem \ref{thm:intro 1}), and the moduli of limit linear series has correct dimension, all the limit linear series smooth out.  
\end{thm}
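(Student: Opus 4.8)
The plan is to prove this in two stages: first realize the moduli functor of linked linear series as a closed subscheme of a linked Grassmannian (or, in the presence of ramification data, a ``linked flag scheme''), and then run the classical Eisenbud--Harris/Osserman smoothing argument, which becomes available once Theorem~\ref{thm:intro 1} provides flatness and integrality of the ambient object. For the first stage I would fix a one-parameter smoothing $\mathcal{X}\to B=\spec R$ of the given nodal curve, with $R$ a DVR and smooth generic fibre, and consider the locally free sheaves on $B$ obtained from the twists of the relevant line bundle by the components of the special fibre. Extending the observation of Hahn--Li, these twisted sheaves, together with the natural maps induced by the sections cutting out the twisting divisors, assemble into a convex lattice configuration $\Gamma$ in the Bruhat--Tits building of $\PGL$ of the appropriate rank. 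The defining conditions of a linked linear series --- an $(r+1)$-dimensional subspace in each term compatible with all twisting maps, plus the prescribed vanishing behaviour at the nodes --- then decompose as: (i) a point of $LG_r(\Gamma)$, or of the associated linked flag scheme when ramification is imposed; and (ii) a collection of \emph{closed} incidence conditions coming from identifying section spaces across the nodes. Since $LG_r(\Gamma)$ is projective over $B$ by construction and the node-compatibility conditions are closed, the moduli of linked linear series is a closed subscheme of a scheme projective over $B$; the statement over an arbitrary base follows by the usual gluing/spreading-out, and the surjectivity of the forgetful map onto the moduli of limit linear series is part of Osserman's formalism and will be cited.

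For the second stage, assume the lattice configurations arising above are all locally linearly independent. By Theorem~\ref{thm:intro 1} each $LG_r(\Gamma)$ is integral, Cohen--Macaulay, and flat over $B$; I would first check that the ambient projective scheme $X\to B$ built from them (an iterated linked-Grassmannian construction, cut down by the flag conditions when present) inherits flatness and Cohen--Macaulayness with equidimensional geometric fibres, say of relative dimension $\delta$, so $\dim X=\delta+1$. Write $\mathcal{G}\subseteq X$ for the moduli of linked linear series. The content of the first stage is that $\mathcal{G}$ is cut out locally in $X$ by exactly $c:=\delta-\rho$ equations, where $\rho$ is the expected dimension of the moduli of limit linear series; this codimension count is the key technical input. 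By Krull's height theorem every component of $\mathcal{G}$ has dimension $\ge\delta+1-c=\rho+1$, and every component of the special fibre $\mathcal{G}_0=\mathcal{G}\cap X_0$ has dimension $\ge\delta-c=\rho$. The hypothesis that the moduli of limit linear series has its expected dimension $\rho$, together with surjectivity of the forgetful map, forces $\dim\mathcal{G}_0=\rho$; hence $\mathcal{G}$ has no component contained in $X_0$ (such a component would be a component of $\mathcal{G}_0$ of dimension $\ge\rho+1$), so $\mathcal{G}$ is flat over $B$ and every component of $\mathcal{G}_0$ lies in the closure of the generic fibre $\mathcal{G}_\eta$. Since $\mathcal{X}_\eta$ is smooth, $\mathcal{G}_\eta$ parametrizes honest linear series; pushing forward along the forgetful map, every limit linear series on the nodal curve is a limit of $\grd$'s on nearby smooth curves.

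The main obstacle is not the final dimension count --- that is the classical argument --- but the first stage together with the codimension bookkeeping in the second: one must package the limit/linked linear series data on an \emph{arbitrary} nodal curve, with its vanishing-sequence compatibilities and its node-gluing, so that (a) the line-bundle twists genuinely form a \emph{convex} lattice configuration, (b) the resulting ambient scheme is controlled by the linked Grassmannians of Theorem~\ref{thm:intro 1}, and (c) the passage from $X$ to $\mathcal{G}$ is by the expected codimension $\delta-\rho$ rather than more. Ensuring (a) and (c) simultaneously when the dual graph of the curve carries many cycles is where the genuine work lies; the locally-linearly-independent hypothesis is precisely what keeps the geometry of $X$ tame enough for the count to close.
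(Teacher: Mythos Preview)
Your overall architecture is reasonable but there are two genuine gaps, both in places you treat as routine.

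First, the convexity of the lattice configuration is not a formality. You say the twisted sheaves ``assemble into a convex lattice configuration $\Gamma$'' by extending Hahn--Li, but for an arbitrary nodal curve the set of relevant multidegrees is $V(\overline G(w_0))$, and one must prove that the corresponding lattices $\{\Gamma(X,L_w)\}_{w\in V(\overline G(w_0))}$ form a convex set in the building. The paper does this by showing (Proposition~\ref{prop:tropical convexity of vgw0}) that $V(\overline G(w_0))$ is \emph{integrally tropically convex} in $\mathbf{TP}^{|V(G)|-1}$, after identifying multidegrees with lattice points there; convexity of $\Gamma_s$ then follows because intersection of the lattices $\Gamma(X,L_{w_0}(\sum a_vZ_v))$ corresponds to taking coordinatewise minima of the $a_v$. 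This tropical step is the content behind your claim (a), and it does not fall out of the Hahn--Li picture for compact-type curves.

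Second, and more seriously, the surjectivity of the forgetful map from linked to limit linear series is \emph{not} part of Osserman's formalism and cannot simply be cited. In the paper's notation this is condition~(1) of Theorem~\ref{thm:smoothing of lls}, surjectivity of $\tilde\pi\colon\widetilde{\mathcal G}^2\to\mathcal G^2$: given subspaces $V_v\subset\Gamma(X_0,\mathscr L_{w_v})$ satisfying the limit linear series kernel condition~(\ref{eq:limit linear series}), one must produce compatible subspaces $V_w$ at \emph{all} $w\in V(\overline G(w_0))$. This is exactly Lemma~\ref{lem:lifting limit linear series}, and its proof uses the locally linearly independent hypothesis in an essential way. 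Without it your final dimension count does not transfer from linked to limit linear series.

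Finally, the paper's smoothing argument differs structurally from yours: rather than using Cohen--Macaulayness of the linked Grassmannian to run Krull's height theorem on $\widetilde{\mathcal G}$, the paper uses only \emph{irreducibility} of $LG_{r+1}(\Gamma_s)$ (together with surjectivity of $\tilde\pi$) to deduce that $\mathcal G^2$ is irreducible of dimension $d'+1$, and then cuts $\mathcal G$ out of $\mathcal G^2$ by $\sum_v(r+1)d_v$ determinantal equations. Your Cohen--Macaulay route may also close, but the paper's is cleaner because it avoids verifying that the full ambient inherits Cohen--Macaulayness.
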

It is worth-mentioning that linked linear series have proved to be useful in studying more difficult question in Brill-Noether theory, such as the strong maximal rank conjecture (see \cite{FSMRC}). In \cite{liu2018strong}, it played an essential role in verifying the conjecture in certain cases which are recently shown to imply that the moduli spaces of curves $\mathcal M_{22}$ and $\mathcal M_{23}$ are of general type (\cite{farkas2020kodaira}). Since the study of linked linear series is very closely related to the study of linked Grassmannians, it provides extra motivation for further development of the current project.  

\medskip
On the other hand, recently the authors (joint with E. Cotterill) constructed the moduli of inclusion of limit linear series (\cite{cotterill2020secant}), which roughly considers degenerations of two-term flags of linear series on curves where the sub-series has fixed amount of base-points. (Equivalently, they can be seen as linear series on curves, whose images are exceptional with respect to their secant planes.) The notion of \textit{linked chains of flags} naturally arose in the main construction of \textit{loc. cit.} We believe that an in-depth study of (two-term) linked flag schemes will both simplify and generalize the existing constructions. 

\medskip 
In another related direction, our theory also provide potential tools for lifting divisors on the graph $G$ associated to a regular smoothing family $X$ (Definition \ref{defn:smoothing family}) to divisors on the generic fiber $X_\eta$ of the same rank. More precisely, this can be achieved by showing that the moduli space of limit linear series has expected dimension (\cite{osserman2016dimension,liu2018limit,he2018brill}),  lifting the divisor on $G$ to a limit linear series on the special fiber $X_0$, and applying the corresponding smoothing theorem of limit linear series. A similar approach can be found in \cite[\S 5]{he2019smoothing}. See also \cite[\S 10]{baker2016degeneration} for a survey on this problem, and \cite{he2018lifting} for results of lifting divisors while preserving both the rank and ramifications. We expect to extend the lifting results to graphs $G$ beyond those mentioned in \cite{baker2016degeneration} by applying our new smoothing criterion.



\subsection*{Roadmap}\addtocontents{toc}{\protect\setcounter{tocdepth}{1}}The plan for the remainder of this paper is as follows. In Section \ref{sec:mus} we first give the functor of points (\ref{defn:mustafin degeneration functor}) of the linked Grassmannian $LG_r(\Gamma)$ over a DVR associated to any finite convex lattice configuration $\Gamma$ (the notion also generalizes to linked flag schemes). We then establish the quiver representation-theoretic foundation for analyzing the topology and geometry of $LG_r(\Gamma)$. More concretely, we associate to $\Gamma$ a finite quiver with relation $(Q(\Gamma),J_{\Gamma})$ (\ref{defn:associated quiver}, \ref{lem:well-defined}) and describe its bound quiver algebra $A_{\Gamma}$ (\ref{prop:ambient representation 2}). Moreover, we obtain a representation $M_{\Gamma}$ of $A_{\Gamma}$ (\ref{prop:quiver representation and special fiber}, \ref{prop:ambient representation}, \ref{prop:ambient representation 2}), encoding the topology and geometry of the special fiber of $LG_r(\Gamma)$. The definition of \textit{locally linearly independent configurations} is given in \ref{defn:locally linearly independent}. We conclude Section \ref{sec:mus} with motivating examples including Osserman's linked Grassmannian (\ref{subsubsec:linked grassmannian}), the standard local model of certain Shimura varieties (\ref{subsubsec:local model}) and the construction of moduli of limit linear series (\ref{subsubsec:degenerate linear series}). 

\medskip
In Section \ref{sec:geometry of linked flag schemes}, we prove that when $\Gamma$ is locally linearly independent, $LG_r(\Gamma)$ is integral, flat over the base, and Cohen-Macaulay  with reduced fibers (\ref{thm:flatness}). As a result, it coincides with the Mustafin degeneration $M_r(\Gamma)$, for which the Cohen Macaulayness and reducedness of the special fiber was unknown. The main effort was devoted to proving the irreducibility, which involves classifying subrepresentations of $M_{\Gamma}$ (\ref{lem:decomp},  \ref{prop:indecomp of loc linear indep}), as well as the description of the irreducible components of the special fiber $LG_r(\Gamma)_0$ and their general points (\ref{thm:linked degeneration stratification}).  As a byproduct, we give a concrete stratification of $LG_r(\Gamma)_0$ (\ref{thm:linked degeneration stratification}), and provide a way to  calculate the dimensions of the strata using standard techniques from quiver representations (\ref{prop:quiver stratification}, \ref{lem:hom_dim}). The proof of the rest of our main theorem is by a similar argument as in the case of Osserman's linked Grassmannian (\cite{helm2008flatness}).



\medskip

In Section \ref{sec:connection to lls}, we establish a smoothing criterion for limit linear series on arbitrary reducible nodal curves (\ref{thm:smoothing of lls}). This is done in the following way: the moduli scheme $\mathcal G$ of such limit linear series on a family of curves is known to be a determinantal locus inside a particular projective scheme $\cG^2$. Since the data of multidegrees of a limit linear series give a tropically convex set (\ref {prop:tropical convexity of vgw0}), $\mathcal G^2$ admits a natural morphism $\tilde \pi$ from a projective scheme which is DVR-locally a linked Grassmannian (\ref{prop:moduli of lls and linked flags}). 
When $\tilde \pi$ is surjective and those linked Grassmannians are all irreducible, one can conclude that $\mathcal G$ has relative dimension no less than expected over the base of the family, and hence conclude the smoothing theorem as classically stated. We end by giving two examples (\ref{subsubsec:two component case}, \ref{subsubsec:cyclic curve}) where the surjectivity and irreducibility are verified to show the validity of our smoothing criterion, the second of which considers curves not of pseudo-compact type. 


\medskip
After the appearance of the first version of the paper on arxiv, we were kindly informed by Ulrich G\"ortz that the equational description of Mustafin degenerations we cited from \cite{habich2014mustafin} is flawed. This called for a significant re-writing of the paper, which led to the present version. We find it helpful to include G\"ortz's counterexample to the aforementioned description so that the curious readers may be aware of the issue. 
\subsection*{Notations and conventions}\addtocontents{toc}{\protect\setcounter{tocdepth}{1}}
\begin{nt}
Throughout the paper, $\kappa$ will always be an algebraically closed field, $R$ will always be a discrete valuation ring with residue field $\kappa$ and fraction field $K$, and $\pi$ will always be a uniformizer of $R$. We do not assume $\kappa$ is of characteristic zero unless otherwise stated. In Section \ref{sec:connection to lls}, we will assume $R$ to be also complete, but we do not make extra assumption on $R$ elsewhere. 
\end{nt}

\begin{nt}
We use $V$ to denote a $d$ dimensional vector space over $K$. We denote $\mathfrak B_d$ for the Bruhat-Tits building associated to $\PGL(V)$ (\cite[\S 6.9]{abramenko2008buildings}. See also \cite[\S 2]{cartwright2011mustafin}.). We also denote $\mathfrak B^0_d$ to be the set of homothety classes of lattices in $V$.\footnote{Recall that the affine building for $\PGL_n$ is also the affine building for $\SL_n$. } 
\end{nt}

\begin{nt}
For a graph $G$, we denote by $V(G)$ and $E(G)$ the set of vertices and edges of $G$, respectively. If $\ell$ is a path (e.g. a directed edge) in $G$, we denote $s(\ell)$, $t(\ell)$ for its source and target respectively. By a cycle we mean a path $\ell$ such that $s(\ell)=t(\ell)$ is the only repeating vertex. 
\end{nt}

\begin{nt}
Let $n$ be any positive integer. We denote $\mathbf n$ to be the dimension vector $(n\hh n)$ of representations of a given quiver $Q$.  We denote by $[n]$ the set $\{1,...,n\}$.
\end{nt}



\subsection*{Convention on quiver representations}\addtocontents{toc}{\protect\setcounter{tocdepth}{1}}
We refer the readers to \cite{assem2006elements} for basic theory of quiver representations. Throughout the paper, quiver Grassmannians are defined over 
 $\kappa$. Assume that we are given a representation $M$ of a quiver $Q=(Q_0,Q_1)$ with the data $(f_{\ell})_{\ell\in Q_1}$ of linear maps between the underlying vector spaces $(M_i)_{i\in Q_0}$. 
 A \textit{subrepresentation} of $M$ is represented by a tuple of vector spaces $(U_i)_{i\in Q_0}$ such that $U_i\subset M_i$ and $f_{\ell}(U_{s(\ell)})\subset U_{t(\ell)}$, $\fa \ell\in Q_1$. For a path $\ell'=\ell_n\cdots \ell_1$ in $Q$ where $\ell_i\in Q_1$ we denote $f_{\ell'}$ the compositions of all $f_{\ell_i}$. 
 
\subsection*{Convention on the path algebra}\addtocontents{toc}{\protect\setcounter{tocdepth}{1}}
Let $\kappa Q$ be the path algebra of a quiver $Q=(Q_0,Q_1)$. We adopt the right-to-left convention when describing multiplication in $\kappa Q$: let $\ell_1,\ell_2$ be two paths in $Q$. $\ell_2\cd \ell_1$ will be the element in $\kappa Q$ corresponding to the concatenation of the two paths if $t(\ell_1)=s(\ell_2)$, and 0 otherwise. Note that this is opposite to the convention in \cite{assem2006elements}. In particular, our path algebra $\kappa Q$ is the opposite ring of the corresponding definition in \textit{loc. cit.}, and therefore when citing results thereof, ``right modules'' are replaced by ``left modules''. We adopt this convention in consistency with our convention for composition of morphisms. Moreover, we shall denote $\epsilon_i$ for the idempotent element corresponding to a vertex $i\in Q_0$ and $R_Q$ to be the ideal of $\kappa Q$ generated by all the elements in $Q_1$, i.e. the \textit{arrow ideal} of $\kappa Q$ (\cite[Definition II.1.9]{assem2006elements}).

\subsection*{Acknowledgement}\addtocontents{toc}{\protect\setcounter{tocdepth}{1}}
The authors would like to thank Brian Osserman for useful conversations on the subject of the current paper as well as tireless instructions during their PhD studies. Special thanks to Ulrich G\"ortz for pointing out a mistake in a cited result in the first version of this paper and providing us with a counter-example to that result (see Appendix \ref{app:counter example}) as well as useful suggestions via personal correspondence. We would also like to thank the anonymous referees, whose suggestions both helped improve the quality of exposition and pointed us to a related research article which we were previously unaware of.

\section{Linked Grassmannian: Definition and Basic Properties}\label{sec:mus}
In this section, we start by giving the general definition of a linked flag scheme via the moduli functor it represents. Afterwards and throughout the rest of the paper, we shall focus on the special case of linked Grassmannians, and mostly on the cases corresponding to locally linearly independent lattice configurations (Definition \ref{defn:locally linearly independent}). This section also contains the representation-theoretic preliminaries needed in the rest of the paper. 



\subsection{Definition and basic properties}\addtocontents{toc}{\protect\setcounter{tocdepth}{2}}\label{subsec:defn and prop}
Recall that $R$ is a discrete valuation ring with fraction field $K$ and algebraically closed residue field $\kappa$. Fix a uniformizer $\pi$ of $R$. Let $V$ be a vector space of dimension $d$ over $K$. Let $\Gamma=\{[L_i]\}_{i\in I}\subset\mathfrak B^0_d$ be a \textit{convex} collection of homothety classes of lattices in $V$. This means that for any two lattices $L_1,L_2$ such that $[L_1],[L_2]\in\Gamma$, we have $[L_1\cap L_2]\in\Gamma$. Let $\underline d=(d_1,...,d_m)$ where $0<d_m<\cdots<d_1<d$ are positive integers.

\begin{nt}\label{nt:morphisms in a configuration}
Fix a set of representatives $\{L_i\}_{i\in I}$ of $\Gamma$. For each pair $(i,j)\in I^2$ let $n_{i,j}$ be the minimal integer such that $\pi^{n_{i,j}}L_i\subset L_j$. Denote by $F_{i,j}$ the map from $L_i$ to $L_j$ induced by multiplying with $\pi^{n_{i,j}}$. For each $i,j\in I$ denote $\ov L_i=L_i/\pi L_i$ and $f_{i,j}\colon \ov L_i\rightarrow \ov L_j$ the map induced by $F_{i,j}$.
\end{nt}

\begin{defn}\label{defn:mustafin degeneration functor}
Let $\mathcal{LF}_{\underline d}(\Gamma)$ be the functor on $R$-schemes $T$ such that a $T$-valued point of $\mathcal{LF}_{{\underline d}}(\Gamma)$ is a collection of rank-${\underline d}$ subbundles $\mathscr E^m_i\hookrightarrow \cdots \hookrightarrow\mathscr E^1_i$ of $L_i\otimes \mathcal O_T$, one for each $i\in I$, such that 
for each possible inclusion $\pi^k\colon L_a\hookrightarrow L_b$ where $k\in\ZZ$, the induced morphism $L_a\otimes \mathcal O_T\rightarrow L_b\otimes \mathcal O_T$ maps  $\mathscr E^j_a$ to $\mathscr E^j_b$ for all $j$. If $m=1$, we denote the functor by $\mathcal{LG}_{d_1}(\Gamma)$ instead.
\end{defn}


\begin{prop}\label{prop:mustafin degeneration functor representable}
The functor $\mathcal{LF}_{{\underline d}}(\Gamma)$ is represented by a scheme $LF_{\underline d}(\Gamma)$ projective over $R$ which is independent of the choice of representatives $L_i$.
\end{prop}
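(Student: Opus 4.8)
The plan is to realize $\mathcal{LF}_{\underline d}(\Gamma)$ as a closed subfunctor of a finite product of relative flag schemes, which are known to be representable and projective over $R$. First I would observe that forgetting the compatibility conditions, a $T$-point of $\mathcal{LF}_{\underline d}(\Gamma)$ amounts to a point of $\prod_{i\in I}\mathrm{Flag}_{\underline d}(L_i\otimes\mathcal O_T)$, where $\mathrm{Flag}_{\underline d}(L_i)$ is the relative flag scheme of rank-$\underline d$ subbundle flags of the free module $L_i$; each of these is projective over $R$ by standard representability of flag functors (e.g. via the Plücker embedding into a product of Grassmannians, themselves projective over $R$). Since $I$ is finite (the configuration $\Gamma$ is finite), the product $P:=\prod_{i\in I}\mathrm{Flag}_{\underline d}(L_i)$ is again projective over $R$. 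It then suffices to show that the compatibility conditions — that for every pair $(a,b)$ and every integer $k$ with $\pi^k\colon L_a\hookrightarrow L_b$, the induced map sends $\mathscr E^j_a$ into $\mathscr E^j_b$ for all $j$ — cut out a closed subscheme of $P$, and that this closed subscheme is independent of the choice of lattice representatives.

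For the closedness, I would argue one condition at a time. Fix $a,b,k$ with $\pi^kL_a\subset L_b$, and let $\varphi\colon L_a\otimes\mathcal O_P\to L_b\otimes\mathcal O_P$ be the corresponding map of trivial bundles over $P$; let $\mathscr E^j_a\subset L_a\otimes\mathcal O_P$ and $\mathscr E^j_b\subset L_b\otimes\mathcal O_P$ be the tautological subbundles pulled back from the respective factors. The locus where $\varphi(\mathscr E^j_a)\subset\mathscr E^j_b$ is the locus where the composite $\mathscr E^j_a\hookrightarrow L_a\otimes\mathcal O_P\xrightarrow{\varphi} L_b\otimes\mathcal O_P\twoheadrightarrow (L_b\otimes\mathcal O_P)/\mathscr E^j_b$ vanishes; since the target is a vector bundle (as $\mathscr E^j_b$ is a subbundle, i.e. locally a direct summand) and the source is a vector bundle, the vanishing locus of this map of vector bundles is closed, being cut out locally by the entries of a matrix. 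Intersecting these finitely many closed conditions over all $(a,b,k)$ with $\pi^kL_a\subset L_b$ and all $j\in\{1,\dots,m\}$ — note it is enough to take, for each ordered pair $(a,b)$, the minimal such $k$, i.e. $k=n_{a,b}$ in the notation of \ref{nt:morphisms in a configuration}, since any other valid $\pi^{k'}$ factors through $\pi^{n_{a,b}}$ followed by multiplication by a power of $\pi$, and such scaling preserves the subbundle conditions — yields a closed subscheme $LF_{\underline d}(\Gamma)\subset P$. By construction its $T$-points are exactly the $T$-points of $\mathcal{LF}_{\underline d}(\Gamma)$, and being closed in a scheme projective over $R$, it is itself projective over $R$.

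Finally, for independence of the representatives: a different choice of representatives replaces each $L_i$ by $\pi^{c_i}L_i$ for integers $c_i$, and multiplication by $\pi^{c_i}$ gives a canonical isomorphism $L_i\otimes\mathcal O_T\xrightarrow{\sim}\pi^{c_i}L_i\otimes\mathcal O_T$ carrying rank-$\underline d$ subbundle flags to rank-$\underline d$ subbundle flags; one checks that the transition maps $\pi^k$ transform compatibly (the exponents shift by $c_b-c_a$), so the compatibility conditions are preserved, giving a natural isomorphism of functors and hence a canonical isomorphism of the representing schemes. I expect the main obstacle to be purely bookkeeping rather than conceptual: namely, checking carefully that it suffices to impose the compatibility only for the minimal exponents $n_{a,b}$ (so that one genuinely has finitely many closed conditions) and that the subbundle — as opposed to merely subsheaf — hypothesis is what makes each condition closed; the representability of the ambient flag schemes over $R$ is standard and can be cited.
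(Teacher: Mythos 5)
Your proposal is correct and follows essentially the same route as the paper: realize $LF_{\underline d}(\Gamma)$ as the closed subscheme of the $R$-fiber product $\prod_{i\in I}\mathrm{Flag}_{\underline d}(L_i)$ cut out by the vanishing of the composites $\mathcal E^j_{i_1}\hookrightarrow L_{i_1}\otimes\mathcal O\xrightarrow{F_{i_1,i_2}}L_{i_2}\otimes\mathcal O\to (L_{i_2}\otimes\mathcal O)/\mathcal E^j_{i_2}$, and handle the choice of representatives by the scaling isomorphisms. Your additional remarks (that only the minimal exponents $n_{a,b}$ need be imposed, and that the subbundle hypothesis is what makes each vanishing condition closed) are correct refinements of details the paper leaves implicit.
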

\begin{proof}
Let $\mathrm{Flag}_{\underline d}(L_i)$ be the flag scheme of $L_i$ over $\spec(R)$ with universal flag 
$$\mathcal E^m_i\hookrightarrow \cdots \hookrightarrow\mathcal E^1_i\hookrightarrow\mathcal O_i:= L_i\otimes \mathcal O_{\mathrm{Flag}_{\underline d}(L_i)}.$$
Then $LF_{\underline d}(\Gamma)$ is the closed subscheme of the  $R$-fiber product $\prod_{i\in I}\mathrm{Flag}_{\underline d}(L_i)$ which is the intersection of the vanishing loci of the composition of the morphisms
$$\mathcal E^j_{i_1}\hookrightarrow\mathcal O_{i_1}\xrightarrow{F_{i_1,i_2}}\mathcal O_{i_2}\rightarrow \mathcal O_{i_2}/\mathcal E^j_{i_2}$$
for all $1\leq j\leq m$ and $(i_1,i_2)\in I^2$. Hence $LF_{\underline d}(\Gamma)$ is projective.  Moreover, scaling $L_i$ gives an isomorphism between $LF_{\underline d}(\Gamma)$s with respect to different choices of representatives of lattice classes in $\Gamma$.
\end{proof}

\begin{defn}\label{defn:linked flag schemes}
We shall call $LF_{\underline d}(\Gamma)$ the \textit{linked flag scheme of index} ${\underline d}$ associated to $\Gamma$. We shall call it a \textit{linked Grassmannian} and denote by $LG_{d_1}(\Gamma)$ whenever $m=1$. 
\end{defn}
By definition, $LF_{\underline d}(\Gamma)$ is a closed subscheme of the $R$-fibered product  $ \prod_{i\in I}\mathrm{Flag}_{\underline d}(L_i)$
whose generic fiber is the ``diagonal" $\mathrm{Flag}_{\underline d}(V)$. A closely related notion is the \textit{Mustafin degenerations} introduced by H\"abich  \cite{habich2014mustafin}, which generalizes the notion of Mustafin varieties (\cite{mustafin1978}) and provides flat degenerations of flag varieties.

\begin{defn}{\cite[Definition 2.1]{habich2014mustafin}\footnote{In \cite{habich2014mustafin} the author gave an ``equational description" of the Mustafin degenerations below Definition 2.1. This is not necessarily true due to a counter-example provided to us by Ulrich G\"oertz. See Appendix \ref{app:counter example}.}}
The \textit{Mustafin degeneration} associated to $\Gamma$ is the scheme theoretic image of the natural morphism 
$\mathrm{Flag}_{\underline d}(V)\to \prod_{i\in I}\mathrm{Flag}_{\underline d}(L_i)$, where the product on the right is fibered over $R$.
We shall denote it by $M_{\underline d} (\Gamma)$ in general and $M_{d_1}(\Gamma)$ when $m=1$. 
\end{defn}

Note that Mustafin varieties are just the special case of Mustafin degenerations where $m=d_1=1$. By construction, we have a natural inclusion $\iota:M_{{\underline d}}(\Gamma)\to LF_{\underline d}(\Gamma)$ realizing the Mustafin degeneration as a closed subscheme of the corresponding linked flag scheme. When $m=d_1=1$, it is the well-known that $\iota$ is an isomorphism (cf. \cite{hahn2020mustafin}), and this is proved by Faltings \cite[\S  5]{faltings2001toroidal}:

\begin{thm}\label{thm:Faltings}\label{thm:mustafin variety and linked flags scheme}
We have $LG_1(\Gamma)=M_1(\Gamma)$ as schemes.
\end{thm}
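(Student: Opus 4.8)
The plan is to show that the natural closed immersion $\iota\colon M_1(\Gamma)\hookrightarrow LG_1(\Gamma)$ is an isomorphism, and since $\iota$ is already a closed immersion between projective $R$-schemes, it suffices to check it is surjective and that $LG_1(\Gamma)$ is reduced (or, more robustly, that $LG_1(\Gamma)$ is flat over $R$ with the same generic fiber as $M_1(\Gamma)$). First I would recall that over the generic point both schemes are the diagonal $\mathbb P(V)=\Gr_{d_1}(V)$, so $\iota$ is an isomorphism on generic fibers and $M_1(\Gamma)$ is by definition the closure of that diagonal; hence it is enough to prove that $LG_1(\Gamma)$ is $R$-flat with no embedded or extra components supported on the special fiber, equivalently that $LG_1(\Gamma)$ is integral with the diagonal $\mathbb P(V)$ as its (dense) generic fiber. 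Since $LG_1(\Gamma)$ is the closed subscheme of $\prod_{i\in I}\mathbb P(L_i)$ cut out by the incidence conditions of Proposition \ref{prop:mustafin degeneration functor representable}, this reduces to a statement about the special fiber $LG_1(\Gamma)_0$.

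The key step is then to analyze $LG_1(\Gamma)_0$ directly. By the representation-theoretic setup promised in the introduction, a $\kappa$-point of $LG_1(\Gamma)_0$ is a tuple $(U_i\subset \overline L_i)_{i\in I}$ of lines (or rank-$d_1$ subspaces) with $f_{i,j}(U_i)\subset U_j$ for all $i,j$, i.e. a subrepresentation of the ambient representation $M_\Gamma$ of the quiver $(Q(\Gamma),J_\Gamma)$ of the prescribed dimension vector. I would classify these: given any such tuple, let $i_0$ be an index for which the composite maps $f_{i_0,j}$ are ``as injective as possible'' on $U_{i_0}$ (this uses convexity of $\Gamma$ — one can always refine to a common lower bound $[L_{i_0}]\le$ all classes involved, after possibly enlarging, or argue on the finite poset structure of $\Gamma$). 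One shows that $U_j$ is determined by $U_{i_0}$ as $U_j=f_{i_0,j}(U_{i_0})$ whenever $f_{i_0,j}$ is injective on $U_{i_0}$, and more generally each irreducible component of $LG_1(\Gamma)_0$ is the closure of the locus where a fixed ``primitive'' index governs the configuration; these loci are the images of $\mathbb P(\overline L_{i_0})$ (or a product of such over the quotient building data), hence irreducible of the expected dimension, and they glue along the incidence relations exactly as in Cartwright–Habich–Sturmfels \cite{cartwright2011mustafin}. Counting dimensions shows every component has dimension equal to $\dim \mathbb P(V)=d-1$ (resp. $\dim\Gr_{d_1}$), which forces $LG_1(\Gamma)$ to be $R$-flat by the fiber-dimension criterion (all fibers equidimensional of the right dimension, total space with irreducible generic fiber).

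Once flatness and reducedness of the special fiber are in hand, the closed immersion $\iota$ must be an isomorphism: $M_1(\Gamma)$ is by construction the scheme-theoretic closure of the diagonal, $LG_1(\Gamma)$ is flat over the DVR hence equals the closure of its own generic fiber (no $\pi$-torsion), and the two generic fibers agree, so $M_1(\Gamma)=\overline{\mathbb P(V)}=LG_1(\Gamma)$ inside $\prod_i\mathbb P(L_i)$. The main obstacle I anticipate is the classification of subrepresentations of $M_\Gamma$ and the proof that the resulting strata have the expected dimension and that their union is everything — in other words, proving there are no ``unexpected'' subrepresentations of the prescribed dimension vector living over the special fiber. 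This is precisely where the convexity of $\Gamma$ is essential (it controls the poset of lattice classes and the compositions $f_{i,j}$), and it is the technical heart that the later sections generalize; for the case $r=d_1=1$ treated here one can lean on Faltings' argument in \cite[\S 5]{faltings2001toroidal} and the explicit local model computations, reducing everything to the well-understood geometry of a single $\mathbb P(\overline L_{i_0})$ and its images under the transition maps.
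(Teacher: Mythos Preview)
The paper does not give its own proof of this theorem: it is quoted as a result of Faltings \cite[\S 5]{faltings2001toroidal} (see the sentence immediately preceding the statement). In fact the paper's proof of Theorem~\ref{thm:flatness} for locally linearly independent $\Gamma$ \emph{uses} Theorem~\ref{thm:Faltings} as input: irreducibility of $LG_r(\Gamma)$ is deduced there from a dominant rational map out of a product of copies of $LG_1(\Gamma)$, whose irreducibility is supplied by Faltings. So there is no in-paper argument to compare against beyond the citation.

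That said, your sketch has two genuine gaps. First, the strategy you outline --- classify dimension-$\mathbf 1$ subrepresentations of $M_\Gamma$, show each stratum has the expected dimension, and deduce flatness --- is precisely what the paper carries out in Section~\ref{sec:geometry of linked flag schemes}, but only under the \emph{locally linearly independent} hypothesis. Theorem~\ref{thm:Faltings} is stated for an arbitrary convex configuration $\Gamma$, and in that generality the classification of indecomposable subrepresentations of $M_\Gamma$ is neither provided in the paper nor straightforward (cf.\ the local-model example in \S\ref{subsubsec:local model}, which is convex but not locally linearly independent). Your heuristic that ``a fixed primitive index governs the configuration'' does not, on convexity alone, pin down the components of the special fiber.

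Second, the step ``counting dimensions shows every component has dimension $d-1$, which forces $LG_1(\Gamma)$ to be $R$-flat by the fiber-dimension criterion'' is not correct as stated. Equidimensionality of the special fiber does not by itself rule out $\pi$-torsion in the structure sheaf; over a DVR you need in addition that $LG_1(\Gamma)$ has no embedded components (e.g.\ via Cohen--Macaulayness), or directly that the total space is reduced/integral. The paper establishes exactly this in the locally linearly independent case (Theorem~\ref{thm:flatness}) via a Cohen--Macaulayness argument --- again using Theorem~\ref{thm:Faltings} as the base case. Finally, your closing sentence that ``one can lean on Faltings' argument'' is circular: that \emph{is} the theorem under discussion.
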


From now on, we shall focus on the case $m=1$, i.e., linked Grassmannians, and replace $d_1$ with $r$.
We shall see that for certain class of $\Gamma$, $\iota$ is also an isomorphism of schemes in this case (Section \ref{sec:geometry of linked flag schemes}). In order to do so, we will first analyze the topology of the special fiber $LG_r(\Gamma)_0$ of $LG_r(\Gamma)$ via the technique of quiver representations. The latter turns out to be useful, because one can naturally associate a quiver with relations to any convex collection of homothety classes of lattices. Following Notation \ref{nt:morphisms in a configuration}, we define the $\Gamma$-\textit{weight} of a sequence $(i_1,...,i_s)$ in $I$ to be $\sum_{k=1}^{s-1} n_{i_k,i_{k+1}}$. We will simply call it the weight if the context is clear.

\begin{defn}\label{defn:associated quiver}
Following Notation \ref{nt:morphisms in a configuration}, the  quiver associated to a convex configuration $\Gamma$ is defined to be a pair $(Q(\Gamma),J_{\Gamma})$, where $Q(\Gamma)$ is a finite quiver and $J_{\Gamma}$ is an ideal of $\kappa Q(\Gamma)$ such that 
\begin{itemize}
    \item Let $Q(\Gamma)'=(Q(\Gamma)'_0,Q(\Gamma)'_1)$ be the quiver such that $Q(\Gamma)'_0=I$ and $Q(\Gamma)'_1=\{(i,j)\in I^2\mid i\neq j\}$, where $(i,j)$ represents an arrow with source $i$ and target $j$. 
    Then, $Q(\Gamma)$ is obtained from $Q(\Gamma)'$ by removing all arrows $(i,j)$ such that there exists a path $i=i_1,i_2,...,i_s=j$ with length at least 2 in $Q'(\Gamma)$ that has the same weight as $(i,j)$.
    \item $J_{\Gamma}$ is the two-sided ideal of $\kappa Q(\Gamma)$ generated by all paths $(i_1,...,i_s)$ which fail to obtain the minimal weight among all paths with same head and tail, together with the differences of any two paths with same head and tail obtaining the minimal weight.    
\end{itemize}
\end{defn}

\begin{rem}
See Figure \ref{fig: quiver linked grassmannian} for examples of quivers associated to convex lattice configurations. Intuitively, one can think of the arrow $(i,j)$ in $Q(\Gamma)'$ as indicating the injection $F_{i,j}\colon L_{i}\rightarrow L_{j}$. In this way, Definition \ref{defn:associated quiver} can be seen as removing from $Q(\Gamma)'$ such inclusions that can be realized as a composition of two or more other such inclusions. 
\end{rem}

\begin{lem}\label{lem:well-defined}
$(Q(\Gamma),J_{\Gamma})$ is independent of the choices of representatives for lattice classes in $\Gamma$. Moreover, $J_{\Gamma}$ is an admissible ideal of $\kappa Q(\Gamma)$. 
\end{lem}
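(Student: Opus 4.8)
The plan is to verify the two assertions separately, handling independence of representatives first and admissibility second.

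\textbf{Independence of representatives.} First I would observe that changing the representative $L_i$ to $\pi^{c_i}L_i$ for integers $c_i$ replaces each $n_{i,j}$ by $n_{i,j} + c_i - c_j$. Consequently, for any sequence $(i_1,\dots,i_s)$, the $\Gamma$-weight $\sum_{k=1}^{s-1} n_{i_k,i_{k+1}}$ changes by $c_{i_1} - c_{i_s}$, a quantity depending only on the endpoints. Therefore the \emph{difference} of weights between two paths with the same head and tail is invariant under rescaling, and so is the comparison ``has the same weight as the arrow $(i,j)$'' and ``attains the minimal weight among paths with the same endpoints.'' This immediately shows that the set of arrows removed in the construction of $Q(\Gamma)$ from $Q(\Gamma)'$ is the same for any choice of representatives, so $Q(\Gamma)$ is well-defined; and the generators of $J_\Gamma$ (non-minimal-weight paths, and differences of minimal-weight paths with common endpoints) are likewise unchanged, so $J_\Gamma$ is well-defined. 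I would also note that the maps $F_{i,j}$ themselves rescale compatibly, which is the conceptual reason this works, but for the combinatorial statement the weight bookkeeping suffices.

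\textbf{Admissibility.} Recall (\cite[Definition II.2.1]{assem2006elements}) that an ideal $J \subseteq \kappa Q$ is admissible if $R_Q^N \subseteq J \subseteq R_Q^2$ for some $N$, where $R_Q$ is the arrow ideal. The containment $J_\Gamma \subseteq R_Q^2$ is visible from the generating set: a non-minimal-weight path $(i_1,\dots,i_s)$ must have length $s-1 \geq 2$ (a length-one path, i.e. an arrow, is the unique path on its endpoints among arrows, and by the construction of $Q(\Gamma)$ surviving arrows attain the minimal weight — here I should double-check that a surviving arrow $(i,j)$ indeed has minimal weight among \emph{all} paths on $i,j$, not just has no length-$\geq 2$ competitor of equal weight; this follows because if some path had strictly smaller weight than the arrow, then... — actually the cleanest argument is that any path of length $\geq 2$ factors through arrows, each of whose weight is $\leq$ the weight of the corresponding sub-path, so weights can only go down under refinement, hence no path beats an arrow; I would spell this monotonicity out), and a difference of two distinct minimal-weight paths on common endpoints: if one of them were an arrow it would be the unique minimal path, contradicting distinctness, so both have length $\geq 2$. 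Hence every generator lies in $R_Q^2$.

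\textbf{Nilpotency of the quotient.} For $R_Q^N \subseteq J_\Gamma$ with $N$ large, the key point is that $Q(\Gamma)$ admits a function on vertices making all arrows strictly weight-decreasing in a suitable sense — more precisely, I would use that there are only finitely many vertices and that any sufficiently long path repeats a vertex, hence contains a cycle; I then claim any cycle lies in $J_\Gamma$. A cycle $(i_1, \dots, i_s = i_1)$ has some weight $w \geq 0$; if $w = 0$ then all $n_{i_k,i_{k+1}} = 0$, meaning all inclusions $L_{i_k} \subseteq L_{i_{k+1}}$ hold with equality up to the cycle closing, forcing $L_{i_1} = L_{i_1}$ through a chain of equalities — so the cycle and the trivial path $\epsilon_{i_1}$ are two paths with the same endpoints, but the cycle has length $\geq 2 > 0 = $ length of $\epsilon_{i_1}$; the trivial path attains weight $0$, so if the cycle also attains weight $0$ it is a non-trivial minimal-weight path equal in weight to $\epsilon_{i_1}$, hence its difference with $\epsilon_{i_1}$ — wait, $\epsilon_{i_1}$ is not in $R_Q$, so instead I argue the cycle is itself a generator only if it is non-minimal; so I must rule out a genuine weight-zero cycle of positive length. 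But a weight-zero cycle would give $L_{i_1} \subseteq L_{i_2} \subseteq \cdots \subseteq L_{i_1}$, all equalities, so $i_1 = i_2 = \cdots$ as homothety classes, contradicting that a path in a quiver (hence its vertices, except the closing one) — here one uses that $Q(\Gamma)'_1$ only has arrows between \emph{distinct} $i \neq j$, so a cycle of length $\geq 2$ visits at least two distinct classes, contradiction. If $w \geq 1$, then iterating: any path of length $> |I| \cdot (1 + \max_{i,j} n_{i,j})$, say, must contain a cycle of positive weight, and then — hmm, this needs that positive-weight cycles eventually push the path out of $J_\Gamma^c$; the clean statement is that a path of weight exceeding $\max\{\,$minimal weights over all vertex-pairs$\,\}$ is automatically non-minimal hence in $J_\Gamma$, and weight grows without bound along long paths because positive-weight cycles accumulate while zero-weight cycles are impossible. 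So I would take $N$ to be, e.g., $|I| + \max_{i,j}(\text{minimal weight from } i \text{ to } j) + 1$ and argue every path of length $\geq N$ is non-minimal.

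\textbf{Main obstacle.} I expect the genuinely delicate part to be the last one: proving $R_Q^N \subseteq J_\Gamma$, i.e. that long paths are forced into $J_\Gamma$. The subtlety is bounding weights along paths and ruling out positive-length zero-weight cycles; the latter is really a convexity/lattice fact (there is no nontrivial chain of lattice classes each contained in the next and closing up), which should follow from the discreteness of the valuation, but it needs to be stated carefully. The representative-independence part, by contrast, is a routine weight computation.
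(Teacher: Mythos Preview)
Your approach matches the paper's: independence follows from the observation that rescaling representatives shifts path weights by a quantity depending only on the endpoints, and admissibility splits into $J_\Gamma \subseteq R_Q^2$ (arrows surviving in $Q(\Gamma)$ are the unique minimal paths on their endpoints) and $R_Q^N \subseteq J_\Gamma$ (long paths contain cycles, and cycles have strictly positive weight since the lattice classes are distinct).

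Where you overcomplicate is the nilpotency bound. You do not need any weight-based estimate like $N = |I| + \max_{i,j}(\text{minimal weight}) + 1$; indeed that bound does not obviously work, since individual arrow weights $n_{i,j}$ can be zero or negative, so path \emph{length} does not directly control path \emph{weight}. The clean argument, which the paper gives, is simply $N = |I|$: any path of length $\geq |I|$ repeats a vertex by pigeonhole, hence contains a nontrivial cycle; that cycle has positive weight (as you correctly argue, a weight-zero cycle would force the involved lattice classes to coincide), so it is a non-minimal path from a vertex to itself and therefore a generator of $J_\Gamma$; since $J_\Gamma$ is an ideal, any path factoring through it lies in $J_\Gamma$. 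Equivalently, deleting the cycle from the long path produces a path on the same endpoints with strictly smaller weight, so the long path is itself non-minimal. Either way, $R_{Q(\Gamma)}^{|I|} \subseteq J_\Gamma$ and you are done.
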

\begin{proof}
First, we check that $(Q(\Gamma),J_{\Gamma})$ is independent of the choice of representatives. It suffices to check that changing $L_i$ with $\pi^aL_i$ does not change the definition: this change has the effect of changing the weight of any path starting and not ending (resp. ending and not starting) at $i\in I=Q(\Gamma)_0$ by $-a$ (resp. by $a$). Since the removal of arrows from $Q(\Gamma)'$ is made based on weight comparison between paths with same head and tail, $Q(\Gamma)$ is unaltered under a change of representatives. Same can be said for the definition of $J_{\Gamma}$.   

Next, the weight $w$ along an oriented cycle with source $L_i$ is strictly positive, since $\pi^w L_i\subset L_i$ and the vertices correspond to non-homothetic lattices. It follows that $J_{\Gamma}$ contains all paths containing an oriented cycle. 
On the other hand, since there are only $|I|$ vertices in $Q(\Gamma)$, any path consisting of at least $|I|$ arrows must contain an oriented cycle, hence is in $J_{\Gamma}$. In other words, $R^{|I|}_{Q(\Gamma)}\subset J_{\Gamma}$, where $R_{Q(\Gamma)}$ represents the arrow ideal. It is clear that $J_{\Gamma}\subset R^2_{Q(\Gamma)}$, because by the definition of $Q(\Gamma)$, any arrow from $i$ to $j$ in $Q(\Gamma)$ is the unique path in $Q(\Gamma)$ between these two vertices obtaining the minimal weight (hence not contained in $J_\Gamma$) and there are no loops in $Q(\Gamma)$. Thus, $R^{|I|}_{Q}\subset J_{\Gamma}\subset R^2_Q$. 
\end{proof}

Consider all $f_{i,j}$'s as in Notation \ref{nt:morphisms in a configuration} such that $(i,j)\in Q(\Gamma)_1$. This gives a representation of $Q(\Gamma)$ over $\kappa$ of dimension $\mathbf d$ enjoying special relations. This is the motivation behind the definition of $J_{\Gamma}$ and is summarized in the next two propositions.

\begin{prop}\label{prop:quiver representation and special fiber}
The maps $f_{i,j}$ induce a representation $M_\Gamma$ of $Q(\Gamma)$ of dimension $\mathbf d$. The underlying vector spaces of $M_\Gamma$ are $(\ov L_i)_{i\in Q(\Gamma)_0}$.
Furthermore, the set of closed points of $LG_r(\Gamma)_0$ is identified with the set of subrepresentations of $M_\Gamma$ of dimension $\mathbf r$.
\end{prop}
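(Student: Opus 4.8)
The plan is to unwind both sides of the claimed identification and check they are literally the same set. First I would recall from Proposition \ref{prop:mustafin degeneration functor representable} (and its proof) that $LG_r(\Gamma)$ is the closed subscheme of $\prod_{i\in I}\Gr_r(L_i)$ cut out, over $R$, by the vanishing of the composites $\mathcal E_{i_1}\hookrightarrow \mathcal O_{i_1}\xrightarrow{F_{i_1,i_2}}\mathcal O_{i_2}\to \mathcal O_{i_2}/\mathcal E_{i_2}$; so a $\kappa$-point of $LG_r(\Gamma)_0=LG_r(\Gamma)\times_R\kappa$ is exactly a tuple $(E_i)_{i\in I}$ with $E_i\subset \ov L_i$ a $\kappa$-subspace of dimension $r$, such that $f_{i_1,i_2}(E_{i_1})\subset E_{i_2}$ for every pair $(i_1,i_2)\in I^2$ — here $f_{i,j}$ is the reduction mod $\pi$ of $F_{i,j}$, following Notation \ref{nt:morphisms in a configuration}. (One must be a little careful that the defining equations in the proof of Proposition \ref{prop:mustafin degeneration functor representable} are phrased for \emph{all} pairs $(i_1,i_2)$, not just edges of $Q(\Gamma)$, but this only makes the condition on a $\kappa$-point \emph{a priori} stronger.)

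Next I would spell out the right-hand side: a subrepresentation of $M_\Gamma$ of dimension vector $\mathbf r$ is, by the convention on quiver representations recalled in the introduction, a tuple $(U_i)_{i\in Q(\Gamma)_0}$ with $U_i\subset (M_\Gamma)_i=\ov L_i$ of dimension $r$ and $f_{i,j}(U_i)\subset U_j$ for every \emph{arrow} $(i,j)\in Q(\Gamma)_1$. Since $Q(\Gamma)_0=I$ and the underlying spaces of $M_\Gamma$ are the $\ov L_i$ (Proposition \ref{prop:quiver representation and special fiber}, first two sentences, which I may assume), the only gap between the two descriptions is that $LG_r(\Gamma)_0$ asks $f_{a,b}(E_a)\subset E_b$ for \emph{all} pairs $(a,b)\in I^2$, whereas a subrepresentation only sees the arrows surviving in $Q(\Gamma)$. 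So the crux is to show these two conditions on a tuple $(E_i)$ are equivalent.

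The key step, then, is the following compatibility statement, which I would prove from the construction of $(Q(\Gamma),J_\Gamma)$ in Definition \ref{defn:associated quiver}: for any pair $(a,b)\in I^2$ the reduced map $f_{a,b}\colon \ov L_a\to \ov L_b$ factors as a composite of maps $f_{\ell}$ along arrows $\ell$ of $Q(\Gamma)$. Indeed, if $(a,b)$ is itself an arrow of $Q(\Gamma)$ there is nothing to do; otherwise, by Definition \ref{defn:associated quiver} the arrow $(a,b)$ of $Q(\Gamma)'$ was removed precisely because there is a path $a=i_1,\dots,i_s=b$ of length $\geq 2$ in $Q(\Gamma)'$ with the same $\Gamma$-weight $n_{a,b}$. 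Compatibility of the $F$-maps gives $F_{i_{s-1},i_s}\circ\cdots\circ F_{i_1,i_2}=\pi^{(\sum n_{i_k,i_{k+1}})-n_{a,b}}F_{a,b}$ on $L_a$; when the weights agree the exponent is $0$, so $F_{a,b}$ equals this composite on the nose, hence so does $f_{a,b}$ after reduction mod $\pi$. Repeating on each sub-path $(i_k,i_{k+1})$ not yet an arrow of $Q(\Gamma)$ — this process terminates because each replacement strictly increases the length while keeping the weight fixed, and lengths of weight-$n_{a,b}$ paths are bounded by $|I|$ — one lands on a factorization of $f_{a,b}$ through arrows of $Q(\Gamma)$. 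With this, the "all pairs" condition follows from the "arrows only" condition by composition, and the reverse implication is trivial; I would also note there is no issue with loops, since $Q(\Gamma)$ has none (Lemma \ref{lem:well-defined}), and the diagonal pairs $(a,a)$ give $f_{a,a}=\id$. Finally, since $LG_r(\Gamma)$ is projective over $R$ hence of finite type and $\kappa$ is algebraically closed, the closed points of $LG_r(\Gamma)_0$ are exactly its $\kappa$-points, completing the identification.

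\textbf{Main obstacle.} The only real content is the factorization lemma above — i.e.\ checking that deleting arrows according to the weight-comparison rule in Definition \ref{defn:associated quiver} loses no information at the level of the reduced maps $f_{i,j}$, and in particular that the relevant composites of $F$-maps carry \emph{no} power of $\pi$ when the weights match (so that nothing dies on passing to the special fiber). Everything else is bookkeeping with Notation \ref{nt:morphisms in a configuration} and the definition of a subrepresentation.
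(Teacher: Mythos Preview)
Your proposal is correct and in spirit matches the paper's approach. The paper's own proof is the single line ``Follows directly from construction,'' and what you have written is precisely the unwinding of that construction. The one piece of genuine content you isolate --- that for any pair $(a,b)\in I^2$ the map $f_{a,b}$ factors as a composite of $f_\ell$'s along arrows of $Q(\Gamma)$ --- is exactly what the paper records (and proves by the same weight/longest-path argument) in the immediately following Proposition~\ref{prop:ambient representation}(1). So the paper evidently regards this factorization as part of ``the construction'' and defers its justification by one proposition, whereas you fold it into the proof of Proposition~\ref{prop:quiver representation and special fiber} itself; either way the logic is the same.
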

\begin{proof}
Follows directly from construction.
\end{proof}

When verifying properties of $M_\Gamma$, including that it is a representation of $(Q(\Gamma), J_\Gamma)$, the convexity of $\Gamma$ plays an important role. In particular, one can take the convex hull of any two elements:

\begin{lem}\label{lem:convex hull of two points}
Let $\{[L_0],...,[L_a]\}\subset \Gamma$ be the convex hull of $[L_0]$ and $[L_a]$ such that $[L_i]$ is adjacent to $[L_{i+1}]$. Then
\begin{enumerate}
\item $f_{0,a}=f_{a-1,a}\circ\cdots\circ f_{0,1}$ and $f_{a,0}= f_{1,0}\circ\cdots\circ f_{a,a-1}$, in other words, $n_{0,a}=\sum_{i=0}^{a-1}n_{i,i+1}$ and $n_{a,0}=\sum_{i=0}^{a-1}n_{i+1,i}$;
    \item $\ker f_{i,i+1}=\mathrm{Im}f_{i+1,i}$ and $\ker f_ {i+1,i}=\mathrm{Im}f_{i,i+1}$ for $0\leq i\leq a-1$;
    \item $\ker f_{i,i+1}\cap\ker f_{i,i-1}=0$ for $1\leq i\leq a-1$.
\end{enumerate}
\end{lem}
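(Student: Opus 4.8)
The plan is to work entirely over the discrete valuation ring $R$ with the lattices $L_0,\dots,L_a$ scaled to representatives for which the chain is "tight," and to reduce all three statements to elementary facts about one-step inclusions of adjacent lattices. First I would fix representatives so that $L_{i+1}\subset L_i$ for all $i$ with $n_{i,i+1}=0$; concretely, since $[L_0],\dots,[L_a]$ is a geodesic segment in $\mathfrak{B}^0_d$, I can choose representatives with $\pi L_i\subsetneq L_{i+1}\subsetneq L_i$, so that $F_{i,i+1}\colon L_i\hookrightarrow L_{i+1}$ is the inclusion (i.e. $n_{i,i+1}=0$) and $F_{i+1,i}\colon L_{i+1}\to L_i$ is multiplication by $\pi$ (i.e. $n_{i+1,i}=1$), because $\pi L_i\subset L_{i+1}$. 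This is exactly the standard picture of an apartment segment; the key input is that adjacency of lattice classes means the quotient $L_i/L_{i+1}$ is a nonzero proper $\kappa$-vector space and $\pi L_i \subset L_{i+1}\subset L_i$.

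For part (1), with these representatives the composite $F_{a-1,a}\circ\cdots\circ F_{0,1}$ is the inclusion $L_0\hookrightarrow L_a$, which is visibly the minimal power of $\pi$ carrying $L_0$ into $L_a$ (no $\pi$ is needed and $L_0\not\subset \pi$-multiples would fail), so $n_{0,a}=0=\sum n_{i,i+1}$; dually the composite in the other direction is multiplication by $\pi^a$, and one checks $\pi^{a-1}L_a\not\subset L_0$ by looking at lengths of the cyclic quotient, giving $n_{a,0}=a=\sum n_{i+1,i}$. The general (unscaled) statement then follows since both sides of each weight identity transform the same way under rescaling, as in Lemma \ref{lem:well-defined}. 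For parts (2) and (3), I would pass to residues: $\overline{L_i}=L_i/\pi L_i$, and $f_{i,i+1},f_{i+1,i}$ are the induced maps. Using the scaled picture, $f_{i,i+1}$ is the reduction of the inclusion $L_i\hookrightarrow L_{i+1}$ and $f_{i+1,i}$ is the reduction of multiplication by $\pi$. A direct computation with the filtration $\pi L_i\subset L_{i+1}\subset L_i$ (and $\pi L_{i+1}\subset \pi L_i$) shows $\ker f_{i,i+1}=\pi L_{i+1}/\pi L_i = \mathrm{Im}\, f_{i+1,i}$ inside $\overline{L_i}$, and symmetrically $\ker f_{i+1,i}=L_{i+1}$-image appropriately, i.e. $=\mathrm{Im}\, f_{i,i+1}$ inside $\overline{L_{i+1}}$; this is just the snake/length bookkeeping for the short chain. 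For part (3), an element of $\overline{L_i}$ killed by both $f_{i,i+1}$ and $f_{i,i-1}$ lifts to some $x\in L_i$ with $x\in \pi L_{i+1}$ (from $\ker f_{i,i+1}=\pi L_{i+1}/\pi L_i$, using the scaling where $L_{i+1}\subset L_i$) and with its image in $\overline{L_{i-1}}$ zero; unwinding the second condition with the appropriate scaling at $i-1$ forces $x\in \pi L_i$, i.e. the class is $0$. The cleanest way to organize this is to note $L_{i-1}\supset L_i\supset L_{i+1}\supset \pi L_{i-1}$ after uniform rescaling, so that $\overline{L_i}=L_i/\pi L_i$ receives $\pi L_{i+1}/\pi L_i$ (the kernel toward $i{+}1$) and $L_{i+1}/\pi L_i$ has image $\pi L_{i-1}/\pi L_i\ne$ appropriate piece toward $i{-}1$; intersecting the two kernels inside $L_i/\pi L_i$ gives $\pi L_{i+1}/\pi L_i \cap (\text{preimage of }0) = \pi L_i/\pi L_i = 0$ by a length count.

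The main obstacle I anticipate is purely notational: choosing one global rescaling of $L_0,\dots,L_a$ that simultaneously makes every adjacent pair into the "$\pi L_i \subset L_{i+1}\subset L_i$" normal form is impossible along a long segment (the weights $n_{i,i+1}$ alternate or accumulate), so I will instead argue \emph{locally} at each consecutive triple $(L_{i-1},L_i,L_{i+1})$, rescaling $L_{i-1}$ and $L_{i+1}$ relative to $L_i$ only for the purpose of computing kernels and images at $\overline{L_i}$, and then invoking the scaling-invariance already established to transfer back. Keeping track of which representative is in force at each step is the only real bookkeeping; the algebra itself is elementary (finite-length $R$-modules, the filtration $\pi L_i\subset L_{i+1}\subset L_i$, and snake-lemma counting), and convexity of $\Gamma$ is used precisely to guarantee that the convex hull of $[L_0]$ and $[L_a]$ is this geodesic chain of pairwise-adjacent classes so that these local triples make sense.
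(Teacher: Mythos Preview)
Your approach is genuinely different from the paper's. The paper does not argue locally with adjacent pairs; it diagonalizes $L_0$ and $L_a$ simultaneously via the invariant-factor theorem, writing $L_a=\langle e_1,\dots,e_d\rangle$ and $L_0=\langle \pi^{a_1}e_1,\dots,\pi^{a_d}e_d\rangle$ with $a=a_1\ge\cdots\ge a_d=0$, and then observes that the convex hull is explicitly $L_i=\langle \pi^{\max(a_j-i,0)}e_j\rangle_j$. All three parts then become one-line computations in these coordinates. In particular, a single global rescaling with $L_0\subset L_1\subset\cdots\subset L_a$ and $\pi L_{i+1}\subset L_i$ \emph{does} exist; your claim that this is ``impossible along a long segment'' is incorrect and is the source of the unnecessary local bookkeeping (and of the sign confusion between $L_{i+1}\subset L_i$ and $L_i\subset L_{i+1}$ in your write-up).

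There is also a genuine gap in your argument for (3). You assert ``$L_{i-1}\supset L_i\supset L_{i+1}\supset \pi L_{i-1}$ after uniform rescaling,'' but the last containment fails in general: on a geodesic the classes $[L_{i-1}]$ and $[L_{i+1}]$ are at distance $2$, not adjacent, so (in the increasing normalization) $\pi L_{i+1}\not\subset L_{i-1}$ whenever some invariant factor satisfies $a_j\ge i+1$. What your computation actually needs, after identifying $\ker f_{i,i+1}=\pi L_{i+1}/\pi L_i$ and $\ker f_{i,i-1}=L_{i-1}/\pi L_i$, is the equality $\pi L_{i+1}\cap L_{i-1}=\pi L_i$, equivalently $L_{i+1}\cap \pi^{-1}L_{i-1}=L_i$. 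This is precisely the statement that $[L_i]$ lies on the geodesic between $[L_{i-1}]$ and $[L_{i+1}]$, and it does not follow from adjacency of the two pairs alone; it is exactly the content of ``convex hull'' that you have not yet used. The paper's coordinate description makes this identity immediate. If you want to keep a basis-free proof, you must state and prove $L_{i+1}\cap \pi^{-1}L_{i-1}=L_i$ directly from the hypothesis that $\{[L_0],\dots,[L_a]\}$ is the convex hull; otherwise your ``length count'' has nothing to count against.
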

\begin{proof}
Up to a suitable choice of basis of $V$ and scaling the representatives, we may assume that $L_a=\mathrm{span}\{e_1,...,e_d\}$ and $L_0=\mathrm{span}\{\pi^{a_1}e_1,...,\pi^{a_d}e_d\}$ where $a=a_1\geq \cdots \geq a_d=0$. Moreover,   $L_i=\mathrm{span}\{\pi^{a_{i,j}}e_j\}_{1\leq j\leq d}$ and $a_{i.j}=\max(a_j-i,0)$ for $0\leq i\leq a$. 

It follows that $n_{0,a}=n_{a,0}=a$ and $n_{i,i+1}=n_{i+1,i}=1$ for $0\leq i\leq n-1$, this gives (1). On the other hand, straightforward calculation shows that, for all possible $i$, 
$$f_{i+1,i}(\overline L_{i+1})=\mathrm{span}\{\overline {\pi^{a_{i,j}}e_j}|a_j>  i\} \mathrm{\ and\ }f_{i-1,i}(\ov L_{i-1})=\mathrm{span}\{\overline {\pi^{a_{i,j}}e_j}|a_j< i\}.$$
Hence $$\ker f_{i+1,i}=\mathrm{span}\{\overline {\pi^{a_{i+1,j}}e_j}|a_j\leq   i\} \mathrm{\ and\ }\ker f_{i-1,i}=\mathrm{span}\{\overline {\pi^{a_{i-1,j}}e_j}|a_j\geq  i\}.$$
This gives (2) and (3).
\end{proof}

\begin{prop}\label{prop:ambient representation} Let $M_\Gamma$ be as in Proposition \ref{prop:quiver representation and special fiber}. We have
\begin{enumerate}
\item For any $i,j\in Q(\Gamma)_0:=I$, there exists a path $\ell$ in $Q(\Gamma)$ such that $f_\ell=f_{i,j}$;
    \item for any two paths $\ell_1$, $\ell_2$ in $Q(\Gamma)'$ such that $s(\ell_1)=s(\ell_2)$, $t(\ell_1)=t(\ell_2)$ and $f_{\ell_i}\neq 0$ for both $i$, we have $f_{\ell_1}=f_{\ell_2}$;
    \item for any non-trivial path $\ell$ in $Q(\Gamma)'$ not of minimal weight (e.g. a cycle), we have $f_{\ell}=0$;
     \item if $\ell\in Q(\Gamma)_1$, then $[L_{s(\ell)}]$ is adjacent to $[L_{t(\ell)}]$. 
\end{enumerate}
In particular, $M_{\Gamma}$ is a representation of $(Q(\Gamma),J_{\Gamma})$.
\end{prop}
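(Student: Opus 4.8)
The plan is to isolate one elementary computation and then read everything off from it. Fix a path $\ell=(i_0,\dots,i_s)$ in $Q(\Gamma)'$ of $\Gamma$-weight $w=\sum_{k=0}^{s-1}n_{i_k,i_{k+1}}$. By Notation \ref{nt:morphisms in a configuration} the composite $F_\ell=F_{i_{s-1},i_s}\circ\cdots\circ F_{i_0,i_1}\colon L_{i_0}\to L_{i_s}$ is simply multiplication by $\pi^{w}$; since $\pi^{w}L_{i_0}\subset L_{i_s}$ and $n_{i_0,i_s}$ is the least exponent with this property we get $w\ge n_{i_0,i_s}$, and moreover $F_\ell=\pi^{\,w-n_{i_0,i_s}}F_{i_0,i_s}$. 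Because the single arrow $(i_0,i_s)$ of $Q(\Gamma)'$ already has weight $n_{i_0,i_s}$, the minimal weight of a path from $i_0$ to $i_s$ is exactly $n_{i_0,i_s}$. Reducing mod $\pi$ then yields the dichotomy: $f_\ell=f_{i_0,i_s}$ when $\ell$ has minimal weight, and $f_\ell=0$ otherwise. This dichotomy already gives (3) (its second case), and it yields (2) immediately: if $f_{\ell_1},f_{\ell_2}\ne 0$ then both paths have minimal weight, so both equal $f_{s(\ell_i),t(\ell_i)}$.

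For (1) I would pass to a longest minimal-weight representative. First, a minimal-weight path from $i$ to $j$ cannot repeat a vertex: if it did, excising the resulting oriented cycle would shorten it and decrease its weight by the weight of that cycle, which is strictly positive (as in the proof of Lemma \ref{lem:well-defined}) — contradicting minimality. Hence such paths have at most $|I|-1$ arrows, so there are finitely many of them, and I can choose one, $\ell^{\ast}$, with the largest number of arrows; at worst $\ell^{\ast}$ is the arrow $(i,j)$ itself (the case $i=j$ being trivial via the constant path $\epsilon_i$). If some arrow $(a,b)$ appearing in $\ell^{\ast}$ were not in $Q(\Gamma)$, then by Definition \ref{defn:associated quiver} it was deleted because some path $\rho$ from $a$ to $b$ of length $\ge 2$ has the same weight $n_{a,b}$; replacing $(a,b)$ by $\rho$ inside $\ell^{\ast}$ yields a path from $i$ to $j$ with the same total weight but strictly more arrows, contradicting the choice of $\ell^{\ast}$. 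So every arrow of $\ell^{\ast}$ lies in $Q(\Gamma)$, and $f_{\ell^{\ast}}=f_{i,j}$ by the dichotomy.

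For (4) I would argue by contradiction: suppose $(a,b)\in Q(\Gamma)_1$ while $[L_a]$ and $[L_b]$ are not adjacent. By convexity the convex hull of $[L_a]$ and $[L_b]$ lies in $\Gamma$, and by Lemma \ref{lem:convex hull of two points} it is a chain $[L_a]=[M_0],[M_1],\dots,[M_t]=[L_b]$ of consecutively adjacent classes, necessarily with $t\ge 2$ since $[L_a],[L_b]$ are not adjacent; part (1) of that lemma gives $n_{a,b}=\sum_{k=0}^{t-1}n_{M_k,M_{k+1}}$, so $(M_0,\dots,M_t)$ is a path of length $\ge 2$ in $Q(\Gamma)'$ with the same weight as the arrow $(a,b)$. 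By Definition \ref{defn:associated quiver} that arrow should therefore have been removed — a contradiction. Finally, to conclude that $M_\Gamma$ is a representation of $(Q(\Gamma),J_\Gamma)$ it suffices to check that the generators of $J_\Gamma$ act by zero: by (1) the minimal weight among paths of $Q(\Gamma)$ with source $i$ and target $j$ is again $n_{i,j}$, so a generating path that fails to be of minimal weight has weight $>n_{i,j}$ and acts by $0$ by (3), while a generator $\ell_1-\ell_2$ with $\ell_1,\ell_2$ of minimal weight acts by $0$ by (2); since these generate the two-sided ideal $J_\Gamma$, all of $J_\Gamma$ annihilates $M_\Gamma$.

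The step I expect to be the real work is (1): one has to set up the finiteness/termination correctly — that minimal-weight paths have no repeated vertex and hence bounded length, so that a longest one exists — and the substitution step needs the routine bookkeeping that sub-paths of minimal-weight paths, and minimal-weight paths inserted in place of an arrow of a minimal-weight path, are themselves of minimal weight (via additivity of weight along concatenation together with $n_{a,c}\le n_{a,b}+n_{b,c}$). Parts (2), (3) and the final assembly are then formal consequences of the opening computation.
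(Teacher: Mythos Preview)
Your proof is correct and follows essentially the same approach as the paper: the opening dichotomy $f_\ell=f_{i_0,i_s}$ or $f_\ell=0$ according as the weight is minimal, the ``longest minimal-weight path'' argument for (1), and the convex-hull chain for (4) all match the paper's proof. You supply more detail than the paper does in justifying why a longest minimal-weight path exists (no repeated vertices, hence bounded length), which is a welcome addition; the extra bookkeeping you flag in the final paragraph about subpaths of minimal-weight paths is in fact not needed, since the substitution of $\rho$ for $(a,b)$ preserves the total weight directly by additivity.
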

\begin{proof}
(1) Take a longest path $\ell=(i_1,...,i_s)$ in $Q'(\Gamma)$ with the same source, tail and weight as the arrow $(i,j)$. Then $f_\ell\colon \ov L_i\rightarrow\ov L_j$ is induced by $F_{i_{s-1},i_s}\circ\cdots\circ F_{i_1,i_2}=F_{i,j}$. Hence $f_\ell=f_{i,j}$. Note that the weight of $(i,j)$ is the minimal among all paths from $i$ to $j$. It follows that all arrows in $\ell$ are preserved in $Q(\Gamma)$.

(2) Clearly, $f_{\ell_1}=f_{s(\ell_1),t(\ell_1)}=f_{s(\ell_2),t(\ell_2)}=f_{\ell_2}$.

(3) Again, let $\ell=(i_1,...,i_s)$. Then $F_{i_{s-1},i_s}\circ\cdots\circ F_{i_1,i_2}(L_{i_1})\subset \pi F_{i_1,i_s}(L_{i_1})\subset \pi L_{i_s}$. Thus $f_\ell=0$.

(4) According to Lemma \ref{lem:convex hull of two points} (1), the convex hull of $L_{s(\ell)}$ and $L_{t(\ell)}$ gives a path in $Q'(\Gamma)$ with the same weight as $\ell$. Since $\ell$ has length one and $\ell\in Q(\Gamma)_1$, the path given by the convex hull must also have length one. Therefore, $[L_{s(\ell)}]$ is adjacent to $[L_{t(\ell)}]$.
\end{proof}

\begin{rem}\label{rem:pre-linked}
Proposition \ref{prop:quiver representation and special fiber} and \ref{prop:ambient representation} almost imply that $LG_r(\Gamma)_0$ is a pre-linked Grassmannian over $\kappa$ in the sense of  \cite[Definition A.1.2]{Ohrk}, except potentially in the case where there exist minimal paths $\ell_1$ and $\ell_2$ in $Q(\Gamma)$ with same heads and tails such that $f_{\ell_1}=0$ while $f_{\ell_2}\neq 0$. We will see later (in Remark~\ref{rem:prelinked 2}) that $LG_r(\Gamma)_0$ is a pre-linked Grassmannian at least for certain $\Gamma$'s.
\end{rem}
The importance of the above proposition lies within the fact that the bound quiver algebra of $(Q_{\Gamma},J_{\Gamma})$ is a finite-dimensional $\kappa$-algebra. In particular, its finitely-generated modules are completely decomposable, i.e. they can be decomposed into a direct sum of indecomposable modules in a unique way (\cite[Theorem I.4.10]{assem2006elements}). Moreover, we get a complete list of indecomposable projective modules: let $A_{\Gamma}=\kappa Q(\Gamma)/J_{\Gamma}$ and let $\epsilon_i$ be the idempotent element corresponding to $i\in Q(\Gamma)_0$. We have
\begin{lem}{\cite[Lemma I.5.3(b), Corollary II.2.12]{assem2006elements}}\label{lem:indecomp_proj}
$A_{\Gamma}$ (as a left $A_{\Gamma}$-module) admits a decomposition of the form 
$A_\Gamma\cong \bigoplus_{i\in Q(\Gamma)_0} P_i$, where each $P_i:=A_{\Gamma}\cd \epsilon_i$ is a projective indecomposable $A_{\Gamma}$-module. 
Moreover, each $P_i$ corresponds to the dimension-$\mathbf 1$ representation $(\epsilon_j\cd P_i,f_{\ell})_{j\in Q(\Gamma)_0,\ell\in Q(\Gamma)_1}$ of $(Q(\Gamma),J_{\Gamma})$,
and
every projective $A_{\Gamma}$-module is a direct sum of such $P_i$'s. 
\end{lem}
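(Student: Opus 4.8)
The plan is to deduce the statement from the standard structure theory of bound quiver algebras in \cite{assem2006elements}, the only input specific to our setting being that $A_\Gamma=\kappa Q(\Gamma)/J_\Gamma$ is a finite-dimensional $\kappa$-algebra because $J_\Gamma$ is admissible (Lemma \ref{lem:well-defined}). First I would recall that the images of the vertex idempotents give a decomposition $1=\sum_{i\in Q(\Gamma)_0}\epsilon_i$ into a complete set of orthogonal idempotents of $A_\Gamma$, and that admissibility of $J_\Gamma$ (concretely, $J_\Gamma\subset R^2_{Q(\Gamma)}$) forces each $\epsilon_i$ to remain primitive, with $\epsilon_iA_\Gamma\epsilon_i$ local; this is exactly \cite[Lemma I.5.3(b)]{assem2006elements}. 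Consequently $A_\Gamma=\bigoplus_{i\in Q(\Gamma)_0}A_\Gamma\epsilon_i$ as left $A_\Gamma$-modules, each $P_i:=A_\Gamma\cdot\epsilon_i$ is a direct summand of the free module $A_\Gamma$ and hence projective, and $P_i$ is indecomposable because $\End_{A_\Gamma}(P_i)\cong\epsilon_iA_\Gamma\epsilon_i$ is local.

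Next I would identify $P_i$ with a representation of $(Q(\Gamma),J_\Gamma)$ via the standard equivalence between left $A_\Gamma$-modules and representations (\cite[Corollary II.2.12]{assem2006elements}): $P_i$ corresponds to $\bigl((\epsilon_j\cdot P_i)_{j\in Q(\Gamma)_0},(f_\ell)_{\ell\in Q(\Gamma)_1}\bigr)$ with $f_\ell$ given by left multiplication by the arrow $\ell$. The point that uses the specific shape of $J_\Gamma$ is that $\epsilon_j P_i=\epsilon_jA_\Gamma\epsilon_i$ is at most one-dimensional: modulo $J_\Gamma$ every path of non-minimal weight dies, and any two minimal-weight paths with the same source and target become equal, so $\epsilon_jA_\Gamma\epsilon_i$ is spanned by the class of a single minimal-weight path from $i$ to $j$. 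Such a path exists and survives in $A_\Gamma$: by Proposition \ref{prop:ambient representation}(1) there is a path $\ell$ in $Q(\Gamma)$ with $f_\ell=f_{i,j}$, which by the argument of that proof has minimal weight, and its class is nonzero in $A_\Gamma$ since the $\kappa$-linear map $\kappa Q(\Gamma)\to\bigoplus_{(a,b)}\kappa$ sending each minimal-weight path to the basis vector indexed by its endpoint pair and every other path to $0$ annihilates all generators of $J_\Gamma$. Hence $\dim_\kappa\epsilon_jP_i=1$ for every $j$, so $P_i$ has dimension vector $\mathbf 1$ and is precisely the representation in the statement.

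Finally, for the last assertion, any projective $A_\Gamma$-module $P$ is a direct summand of a free module $A_\Gamma^{\oplus n}\cong\bigoplus_{i}P_i^{\oplus n}$; since $A_\Gamma$ is finite-dimensional, the Krull--Schmidt theorem \cite[Theorem I.4.10]{assem2006elements} applies to finitely generated $A_\Gamma$-modules, and because each $P_i$ has local endomorphism ring, every direct summand of $\bigoplus_iP_i^{\oplus n}$ is isomorphic to a direct sum of copies of the $P_i$'s. The only real work, rather than a deep obstacle, is pinning down that $\epsilon_jA_\Gamma\epsilon_i$ is exactly one-dimensional — that the minimal-weight path genuinely survives in $A_\Gamma$ while everything else is killed — which is where the precise definition of $J_\Gamma$ and Proposition \ref{prop:ambient representation} enter; the remainder is a direct citation of \cite{assem2006elements}.
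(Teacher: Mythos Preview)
Your proof is correct and follows essentially the same approach as the paper: both invoke the cited results of \cite{assem2006elements} for the decomposition, indecomposability, and classification of projectives, and both compute $\dim_\kappa \epsilon_j P_i = 1$ by observing that $\epsilon_j A_\Gamma \epsilon_i$ is spanned by residue classes of paths from $i$ to $j$ modulo $J_\Gamma$. You are actually more careful than the paper on one point: the paper simply asserts that this space is one-dimensional ``by the definition of $J_\Gamma$'', whereas you separately verify the upper bound (all minimal-weight paths coincide, all others vanish) and the lower bound (a minimal-weight path exists by Proposition~\ref{prop:ambient representation}(1) and survives in $A_\Gamma$ via your auxiliary linear functional), which makes the argument self-contained.
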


\begin{proof}
Only the last part requires justification. Using the standard equivalence between modules of bound quiver algebras and quiver representations $F:\textrm{Mod} A_{\Gamma}\to \textrm{Rep}_{\kappa}(Q(\Gamma),J_{\Gamma})$ ( \cite[Theorem III.1.6]{assem2006elements} ), $F(P_i)=(\epsilon_j\cd P_i,f_{\ell})_{j\in Q(\Gamma)_0,\ell\in Q(\Gamma)_1}$, where $f_{\ell}(v)= \ell\cd v$. By definition, $\epsilon_j\cd P_i$ is the vector space spanned by residue classes (mod $J_{\Gamma}$) of paths from $i$ to $j$. By the definition of $J_{\Gamma}$, this vector space is 1-dimensional.  
\end{proof}
Hereafter, we shall always use $\{P_i\mid i\in Q(\Gamma)_0\}$ to denote the complete set of indecomposable projective representations of $(Q(\Gamma),J_{\Gamma})$. Note that for each $j\in Q(\Gamma)_0$, the vector space of $P_i$ on $j$ is the image of the vector space of $P_i$ on $i$.
\begin{prop}\label{prop:ambient representation 2}
$A_{\Gamma}$ satisfies the following properties: 
\begin{enumerate}
\item $\dim_{\kappa} A_{\Gamma}=|I|^2$ and can be presented as $\bigoplus_{i,j\in Q(\Gamma)_0}\kappa\cd\ell_{i,j}$, where $\ell_{i,j}$ are defined as follows: when $i=j$, $\ell_{i,j}=\epsilon_i$; when $i\neq j$, we fix a choice of a path $\ell_{i,j}$ in $Q(\Gamma)$ from $i$ to $j$ such that the induced linear map $\ov L_i\to \ov L_j$ along that path is non-zero. 
\item The algebra structure of $A_{\Gamma}$ is determined by the following rule: $\ell_{i',j}\cd \ell_{i,i'}=0$ if and only if $\ell_{i,i'}=\ell_{k,i'}\cd \ell_{i,k}$, where $k$ is the index in $I$ such that $[L_k]=[F_{i',j}(L_{i'})\cap \pi L_{j}]$ is the point in the convex hull of $\{[L_{i'}],[L_j]\}$ that is adjacent to $[L_{i'}]$. 
\item Let $M_\Gamma$ be as in Proposition \ref{prop:quiver representation and special fiber}. It corresponds to a projective $A_{\Gamma}$-module if and only if elements in $\Gamma$ belong to one apartment in $\mathfrak B_d$.
\end{enumerate}
\end{prop}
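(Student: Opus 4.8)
The plan is to prove the three assertions in order, leveraging the combinatorial description of $Q(\Gamma)$ and $J_\Gamma$ together with Lemma \ref{lem:indecomp_proj}, Lemma \ref{lem:convex hull of two points}, and Proposition \ref{prop:ambient representation}. For (1), I would use the decomposition $A_\Gamma \cong \bigoplus_{i\in Q(\Gamma)_0} P_i$ of Lemma \ref{lem:indecomp_proj}, so that $\dim_\kappa A_\Gamma = \sum_i \dim_\kappa P_i = \sum_i \sum_j \dim_\kappa(\epsilon_j\cdot P_i)$. By the proof of Lemma \ref{lem:indecomp_proj}, each $\epsilon_j\cdot P_i$ is spanned by residue classes mod $J_\Gamma$ of paths from $i$ to $j$; by the definition of $J_\Gamma$ (all paths failing to attain the minimal weight, together with differences of minimal-weight paths with the same endpoints, are killed) and by Proposition \ref{prop:ambient representation}(1) (a minimal-weight path from $i$ to $j$ always exists in $Q(\Gamma)$), this vector space is exactly $1$-dimensional for every pair $(i,j)$. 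Hence $\dim_\kappa A_\Gamma = |I|^2$, and choosing the $\ell_{i,j}$ as stated (with $\ell_{i,i}=\epsilon_i$, and for $i\neq j$ any path in $Q(\Gamma)$ along which the induced map $\ov L_i \to \ov L_j$ is nonzero — such a path being a minimal-weight path, which exists and is unique mod $J_\Gamma$) gives the asserted basis $\bigoplus_{i,j}\kappa\cdot\ell_{i,j}$.

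For (2), the point is to decide when the concatenation $\ell_{i',j}\cdot\ell_{i,i'}$ (a path from $i$ to $j$ through $i'$) lies in $J_\Gamma$, i.e. when it is \emph{not} of minimal weight. Its weight is $n_{i,i'}+n_{i',j}$, which, by Lemma \ref{lem:convex hull of two points}(1) applied twice, equals the sum of single-step weights along the convex hulls of $\{[L_i],[L_{i'}]\}$ and $\{[L_{i'}],[L_j]\}$. It is of minimal weight from $i$ to $j$ precisely when $[L_{i'}]$ lies on the geodesic (convex hull segment) from $[L_i]$ to $[L_j]$; equivalently, letting $[L_k] = [F_{i',j}(L_{i'})\cap \pi L_j]$ be the neighbor of $[L_{i'}]$ on the segment toward $[L_j]$, the path $\ell_{i,i'}$ does \emph{not} factor as $\ell_{k,i'}\cdot\ell_{i,k}$. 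I would make this precise by translating weights into the tree/building distance and comparing $n_{i,j}$ with $n_{i,i'}+n_{i',j}$, using that the weight of a path is additive along a geodesic and strictly increases when one detours off it (which follows from the strict positivity of cycle weights established in the proof of Lemma \ref{lem:well-defined}). Combined with part (1), which says $A_\Gamma$ is freely spanned by the $\ell_{i,j}$, this pins down the entire multiplication table: $\ell_{i',j}\cdot\ell_{i,i'}$ is either $0$ or equals $\ell_{i,j}$ up to the unique nonzero scalar, and the criterion for the former is exactly the stated factorization condition.

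For (3), I would show both directions using Lemma \ref{lem:indecomp_proj}, which characterizes projective $A_\Gamma$-modules as direct sums of the dimension-$\mathbf 1$ representations $P_i$. If all $[L_i]$ lie in one apartment, then after a common choice of basis all lattices are simultaneously diagonal, $L_i = \mathrm{span}\{\pi^{a_{i,j}}e_j\}_j$; decomposing $M_\Gamma$ according to the coordinate lines $\kappa\cdot\ov{e_j}$ exhibits it as a direct sum of $d$ dimension-$\mathbf 1$ subrepresentations, each of which is some $P_{i}$ (the one with $i$ the index where that coordinate's exponent is minimized), so $M_\Gamma$ is projective. Conversely, if $M_\Gamma$ is projective it is a direct sum of $P_i$'s; since $\dim M_\Gamma = \mathbf d$ and each $P_i$ has dimension vector with entries in $\{0,1\}$, we get $d$ summands, and the images of the corresponding lines in all the $\ov L_i$'s lift (by Nakayama / since the $\ov L_i$ have full rank) to a common basis of $V$ diagonalizing every $L_i$ — i.e. the $[L_i]$ share an apartment. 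The main obstacle I expect is part (2): carefully verifying that ``weight of the two-step path exceeds $n_{i,j}$'' is equivalent to the geometric statement ``$[L_{i'}]$ is off the $[L_i]$–$[L_j]$ geodesic'' and that this in turn is equivalent to the clean algebraic factorization $\ell_{i,i'} = \ell_{k,i'}\cdot\ell_{i,k}$ requires a genuinely careful bookkeeping of valuations in the building, rather than a one-line argument; everything else reduces fairly directly to the already-established lemmas.
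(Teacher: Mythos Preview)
Your plans for (1) and (3) match the paper's argument and are correct; the paper likewise deduces (1) straight from Lemma~\ref{lem:indecomp_proj}, and for (3) it carries out exactly the Nakayama lift you describe (with one extra care: rather than the ill-defined phrase ``the index where the exponent is minimized'', the paper characterizes $i_j$ as the vertex at which $f_{k,i_j}$ kills the $j$-th basis vector for every $k\neq i_j$, and then uses Lemma~\ref{lem:convex hull of two points}(2),(3) to check that $f_{i_j,k}$ never kills it).

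Your plan for (2), however, has a genuine gap. You propose the chain
\[
\ell_{i',j}\cdot\ell_{i,i'}=0 \iff n_{i,i'}+n_{i',j}>n_{i,j} \iff [L_{i'}]\notin\mathrm{conv}\{[L_i],[L_j]\} \iff \ell_{i,i'}=\ell_{k,i'}\cdot\ell_{i,k}.
\]
The middle equivalence is false in general: weight-additivity through $i'$ does \emph{not} force $[L_{i'}]$ to lie on the convex hull. Take $d=3$ and the three vertices of a $2$-simplex, say $L_1=\langle e_1,e_2,e_3\rangle$, $L_2=\langle \pi^{-1}e_1,e_2,e_3\rangle$, $L_3=\langle \pi^{-1}e_1,\pi^{-1}e_2,e_3\rangle$. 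This $\Gamma$ is convex, and one computes $n_{1,2}=n_{2,3}=n_{1,3}=0$, so the path $1\to 2\to 3$ has minimal weight; yet $[L_1]$ and $[L_3]$ are already adjacent, so $[L_2]$ is not on their convex hull. In this example your second and third conditions disagree, and your third and fourth conditions disagree as well (here $k=3$, and $\ell_{1,2}\neq\ell_{3,2}\cdot\ell_{1,3}$ since $n_{3,2}=1$), so the chain cannot be repaired by reinterpreting the geodesic condition.

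The paper sidesteps this by a direct valuation computation. The ``if'' direction is immediate: by Lemma~\ref{lem:convex hull of two points}(1) one has $\ell_{i',j}=\ell_{k,j}\cdot\ell_{i',k}$, so the assumed factorization makes $\ell_{i',j}\cdot\ell_{i,i'}$ contain the cycle $\ell_{i',k}\cdot\ell_{k,i'}\in J_\Gamma$. For ``only if'', the paper sets $F_{i,k}=\pi^a$, $F_{k,i'}=\pi^b$, $F_{i,i'}=\pi^c$, $F_{i',j}=\pi^t$, normalizes $L_k=\pi^tL_{i'}\cap\pi L_j$, and shows directly that $a=c+t$ (from $\pi^{c+t}L_i\subset L_k$ but $\pi^{c+t-1}L_i\not\subset L_k$, using $\ell_{i',j}\cdot\ell_{i,i'}=0$) and $b=-t$ (from the definition of $L_k$), whence $a+b=c$ and $\ell_{k,i'}\cdot\ell_{i,k}=\ell_{i,i'}$. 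No intermediate ``on the convex hull'' statement is needed.
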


\begin{proof}
(1) The dimension statement follows directly from the decomposition in Lemma \ref{lem:indecomp_proj} and the fact $I=Q(\Gamma)_0$. The presentation of $A_{\Gamma}$ follows as one can take $\ell_{i,j}$ to be a basis of $\epsilon_j\cd P_i$. 

(2) Notice that the existence of such a vertex $k$ follows from the convexity of $\Gamma$. By Lemma \ref{lem:convex hull of two points} (1), $\ell_{i',j}=\ell_{k,j}\cdot\ell_{i',k}$, thus the ``if" part is clear. The ``only if" part reduces to showing that $\ell_{k,i'}\cd \ell_{i,k}\neq 0$, provided that
$\ell_{i',j} \cdot \ell_{i,i'} = 0$. Suppose $F_{i,k}=\pi^a$, $F_{k,i'}=\pi^b$ and $F_{i,i'}=\pi^c$. We just need to check $a+b=c$. Further set $F_{i',j}=\pi^t$ and without loss of generality take $L_k=F_{i',j}(L_{i'})\cap \pi L_{j}=\pi^tL_{i'}\cap \pi L_j$.

Since $\ell_{i',j}\cdot \ell_{i,i'}=0$, we have $\pi^{c+t}L_i\subset\pi L_j$. Also, $\pi^{c+t}L_i\subset \pi^tL_{i'}$, hence $\pi^{c+t}L_i\subset L_k$. On the other hand, $\pi^{c-1}L_i\not\subset L_{i'}$, hence $\pi^{c+t-1}L_i\not\subset L_k$ and $F_{i,k}=\pi^{c+t}$. It then further reduces to showing $b=-t$. First of all, $\pi^{-t}L_k\subset L_{i'}$ by construction, hence $b\leq -t$. Secondly, since $\pi^tL_{i'}\subset L_j$, we have $L_k=\pi^tL_{i'}\cap \pi L_j\supset\pi^{t+1}L_{i'}$. Therefore, $L_{i'}\subsetneq \pi^{-t-1}L_k$ and $b>-t-1$. 
Thus $b=-t$.



(3) Assume first that $\Gamma$ is contained in an apartment and choose representatives such that
\[L_i=\langle \pi^{a_{i,1}}e_1,...,\pi^{a_{i,d}}e_d\rangle, a_{i,j}\in \ZZ,\ \fa i\in Q(\Gamma)_0=I.\]
Let $q_i\colon L_i\rightarrow \ov L_i$ be the quotient map, and denote $v_{i,j}=q_i(\pi^{a_{i,j}}e_j)\in \ov L_i$. 
We claim that, 
for every $1\leq j\leq d$, there exists exactly one $i_j$ such that $v_{i_j,j}\in \ov L_{i_j}$ is non-zero and the representation $R_j:=(\kappa\cdot f_{i_j,k}(v_{i_j,j}))_{k\in Q(\Gamma)_0}$ is isomorphic to $P_{i_j}$ (hence also to $(\kappa\cdot v_{k,j})_{k\in Q(\Gamma)_0}$).

Let $i_j\in Q(\Gamma)_0$ be an index such that $f_{k,i_j}(v_{k,j})=0$ for all $k\neq i_j$. Such index must exist, otherwise for every $n\in Q(\Gamma)_0$ one can find $n'\neq n$ such that $f_{n',n}(v_{n',j})=v_{n,j}$. Consequently, one gets an infinite sequence $n_1,n_2,...$ such that 
$f_{n_{k+1},n_k}(v_{n_{k+1},j})=v_{n_k,j}\neq 0.$
This would violate (3) in Proposition \ref{prop:ambient representation}. 
It remains to show that $f_{i_j,k}(v_{i_j,j})\neq 0$ for all $k\neq i_j$. Let $[L_h]$ be the point in the convex hull of $[L_{i_j}]$ and $[L_k]$ that is adjacent to $[L_{i_j}]$. Since $f_{h,i_j}(v_{h,j})=0$, by Lemma \ref{lem:convex hull of two points} (2), $v_{h,j}$ is in the image of $f_{i_j,h}$. Hence we must have $f_{i_j,h}(v_{i_j,j})=v_{h,j}\neq 0$. It follows from Lemma \ref{lem:convex hull of two points} (3) that $f_{i_j,k}(v_{i_j,j})\neq 0$, and hence $f_{i_j,k}(v_{i_j.j})=v_{k,j}$. Thus, $\ov L_k=\s\{f_{i_j,k}(v_{i_j,j})\mid j=1,...,d\}$ for all $k$ and $M_{\Gamma}=\bigoplus_{j=1}^d R_j\cong\bigoplus_{j=1}^d P_{i_j}$.

Conversely, suppose $M_{\Gamma}\cong\bigoplus_{j=1}^d P_{i_j}$ is projective. Fix $e_j\in L_{i_j}$ such that $v_j:=q_{i_j}(e_j)\in\ov L_{i_j}$ generates the direct summand $P_{i_j}$. By Nakayama's Lemma, $\ov L_k=\s\{f_{i_j,k}(v_j)\mid j=1,...,d\}$ implies $L_k=\s\langle F_{i_1,k}(e_1),...,F_{i_d,k}(e_d)\rangle=\s\langle \pi^{n_{i_1,k}}e_1,...,\pi^{n_{i_d,k}}e_d\rangle$ (Notation \ref{nt:morphisms in a configuration}). In other words $\Gamma$ is contained in one apartment. This completes the proof.
\end{proof}
\begin{rem}
We will prove in Section \ref{sec:geometry of linked flag schemes} that if $\Gamma$ is a locally linearly independent configuration (Definition \ref{defn:locally linearly independent}), $M_{\Gamma}$ is projective and hence $\Gamma$ lies within one apartment of $\mathfrak B_d$ (Proposition \ref{prop:indecomp of loc linear indep}).   
\end{rem}

\subsection{Local linear independence}\label{subsec:the local linear independence} In this subsection we introduce a special kind of configuration $\Gamma$ in $\mathfrak B^0_d$, namely the locally linearly independent configurations. It turns out not only are the corresponding quivers $Q(\Gamma)$ relatively simple, but also the subrepresentations of $M_\Gamma$ can be completely classified (see Section \ref{sec:geometry of linked flag schemes}).

\begin{defn}\label{defn:locally linearly independent}
Let $\Gamma=\{[L_i]\}_{i\in I}$ be a convex collection of lattice classes. We say that $\Gamma$ is \textit{locally linearly independent} at $[L_0]\in \Gamma$ if, letting $\{[L_i]\}_{i\in I'\subset I}$ be the set of lattice classes in $\Gamma$ that are adjacent to $[L_0]$, then the spaces $\{f_{i,0}(\overline L_i)=\ker f_{0,i}\}_{i\in I'}$ are linearly independent in $\overline L_0$. We say that $\Gamma$ is \textit{locally linearly independent} if it is so at all points. 
\end{defn}

We say a graph $G$ is a \textit{double tree} if it is obtained from a tree $T$ by adding one edge between each pair of adjacent vertices. We call $T$ the \textit{associated tree} of $G$.

\begin{lem}\label{lem:locally linearly independent underlying graph}
Let $\Gamma=\{[L_i]\}_{i\in I}$ be a locally linearly independent configuration. Then 
\begin{enumerate}
\item for any $i,j$, the arrow $(i,j)$ is in $Q(\Gamma)_1$ if and only if $[L_i]$ is adjacent to $[L_j]$;

\item the underlying (un-directed) graph $G$ of $Q(\Gamma)$ is a double tree;

\item let $T$ be the tree associated to $G$. For any $i,j\in Q(\Gamma)_0$ considered as vertices of $T$, let $i=i_1,...,i_s=j$ be the minimal path in $T$ from $i$ to $j$. Then the convex hull of $\{[L_i],[L_j]\}$ is $\{[L_{i_l}]\}_{1\leq l\leq s}$, and $f_{i,j}=f_{i_{s-1},i_s}\circ\cdots\circ f_{i_1,i_2}$;

\item If $[L]\in \Gamma$ corresponds to a leaf of $T$, then $\Gamma\backslash [L]$ is still a locally linearly independent convex configuration. 
\end{enumerate}
\end{lem}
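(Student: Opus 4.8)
The plan is to prove the four parts more or less in the order stated, since each uses the previous ones. For part (1), one direction is Proposition \ref{prop:ambient representation}(4): if $(i,j)\in Q(\Gamma)_1$ then $[L_i]$ is adjacent to $[L_j]$. For the converse, suppose $[L_i]$ is adjacent to $[L_j]$ but $(i,j)$ has been removed in passing from $Q(\Gamma)'$ to $Q(\Gamma)$; then there is a path $i=i_1,\dots,i_s=j$ of length $\ge 2$ of the same weight. After scaling we may assume $n_{i,j}=1$, so $f_{i,j}\ne 0$ has a nonzero kernel (since $[L_i]\ne[L_j]$ forces $n_{i,j}+n_{j,i}\ge 2$, so $f_{j,i}f_{i,j}=0$ but $f_{i,j}\ne 0$). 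By Proposition \ref{prop:ambient representation}(2), $f_{i,j}=f_{\ell}$ for the path $\ell=(i_1,\dots,i_s)$, and I want to locate $i_2$: the point $[L_{i_2}]$ lies in the convex hull of $[L_i]$ and $[L_j]$ by considering weights, but that convex hull is just $\{[L_i],[L_j]\}$ since they are adjacent — contradicting $s\ge 3$ unless $i_2=j$, i.e. $s=2$ with $f_{i,j}=f_{i_1,i_2}\circ\text{(length-0)}$, which is not a genuine length-$\ge 2$ path. The cleaner route: use Lemma \ref{lem:convex hull of two points}(1), which says the convex hull of $[L_i],[L_j]$ gives a path of the same weight as $(i,j)$; adjacency makes this path have length one, and by the definition of $Q(\Gamma)$ the arrow $(i,j)$ is removed only if \emph{some other} same-weight path of length $\ge 2$ exists — so I must rule that out, which is exactly where \emph{local linear independence} enters (it controls which compositions through a common neighbor can be nonzero and forces uniqueness of minimal paths near each vertex).

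For part (2): by (1), the underlying undirected graph $G$ of $Q(\Gamma)$ has, between two vertices, either a double edge (when the lattice classes are adjacent) or nothing, so it suffices to show the "adjacency graph" of $\Gamma$ — call it $T$ — is a tree. Connectedness of $T$ follows from convexity: the convex hull of any two classes is a chain of adjacent classes (Lemma \ref{lem:convex hull of two points}(1)). To see $T$ has no cycle, suppose $[L_{j_0}],[L_{j_1}],\dots,[L_{j_k}]=[L_{j_0}]$ is a cycle in $T$ with $k\ge 3$; I would look at a vertex, say $[L_{j_1}]$, with its two distinct neighbors $[L_{j_0}],[L_{j_2}]$ on the cycle. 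Local linear independence at $[L_{j_1}]$ says the kernels $\ker f_{j_1,j_0}$ and $\ker f_{j_1,j_2}$ are linearly independent (in particular sum directly). Going around the cycle, composing the maps $f_{j_i,j_{i+1}}$, and using that each such composition along a geodesic in the convex hull is nonzero while a full loop has weight $>0$ hence gives the zero map (Proposition \ref{prop:ambient representation}(3)), I would derive a contradiction with the local linear independence / direct-sum condition at one of the cycle vertices — this is the heart of the argument and the step I expect to be the main obstacle, since it requires carefully tracking images and kernels of the composed maps around the cycle and seeing that the cycle forces a nontrivial relation among the supposedly independent subspaces.

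For part (3): once $T$ is a tree, given $i,j$ with geodesic $i=i_1,\dots,i_s=j$ in $T$, I claim $\{[L_{i_l}]\}$ is exactly the convex hull of $\{[L_i],[L_j]\}$. The convex hull is a chain of adjacent classes from $[L_i]$ to $[L_j]$ (Lemma \ref{lem:convex hull of two points}(1)); such a chain is a path in $T$ from $i$ to $j$; since $T$ is a tree that path is unique, hence equals the geodesic. The formula $f_{i,j}=f_{i_{s-1},i_s}\circ\cdots\circ f_{i_1,i_2}$ is then immediate from Lemma \ref{lem:convex hull of two points}(1) applied to this convex hull. For part (4): let $[L]$ correspond to a leaf of $T$, with unique neighbor $[L']$. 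Removing $[L]$ keeps convexity — a convex hull of two remaining classes is a chain in $T$, and since $[L]$ is a leaf it cannot appear as an interior vertex of any such chain, so the convex hull stays inside $\Gamma\setminus[L]$. Local linear independence at each remaining point $[L_0]$ is inherited: either $[L_0]\ne[L']$, in which case $[L]$ was not adjacent to $[L_0]$ and the set $\{f_{i,0}(\overline L_i)\}$ is unchanged; or $[L_0]=[L']$, in which case we are just dropping one member $\ker f_{L',L}$ from a linearly independent family, which stays linearly independent. I would write parts (3) and (4) briefly as they are essentially bookkeeping once (1) and (2) are in hand.
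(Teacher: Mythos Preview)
Your overall structure matches the paper's, and Parts (3) and (4) are essentially the paper's arguments. In Part (1), your first approach fails---a same-weight path $i=i_1,\dots,i_s=j$ need not pass through the convex hull of $[L_i],[L_j]$, so $[L_{i_2}]$ has no reason to lie there---but your ``cleaner route'' is on the right track. The paper makes it precise by looking at the \emph{last} arrow of a factoring path in $Q(\Gamma)$: $[L_{i_{s-1}}]$ is adjacent to $[L_j]$ (Proposition~\ref{prop:ambient representation}(4)) and distinct from $[L_i]$ (else $f_{i,j}=0$ by Proposition~\ref{prop:ambient representation}(3)), so the containment $f_{i,j}(\overline L_i)\subset f_{i_{s-1},j}(\overline L_{i_{s-1}})$ directly contradicts local linear independence at $[L_j]$.

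The real gap is in Part (2). Knowing the full loop composes to zero does not by itself produce a contradiction; what you need is that the partial compositions $f_{i_{j-1},i_j}\circ\cdots\circ f_{i_1,i_2}$ stay \emph{nonzero} all the way to $j=s$, so that $f_{i_1,i_s}=f_{i_{s-1},i_s}\circ f_{i_1,i_{s-1}}$ and the same containment argument (now at $[L_{i_s}]$, with distinct neighbors $i_1,i_{s-1}$) finishes. The paper proves this nonvanishing by induction, and the inductive step uses Proposition~\ref{prop:ambient representation 2}(2): if the composition first vanishes at step $j$, that proposition (applied with $[L_{i_{j-1}}]$ adjacent to $[L_{i_j}]$) forces $f_{i_1,i_{j-1}}=f_{i_j,i_{j-1}}\circ f_{i_1,i_j}$, hence $f_{i_1,i_{j-1}}(\overline L_{i_1})\subset f_{i_j,i_{j-1}}(\overline L_{i_j})$, contradicting linear independence at $[L_{i_{j-1}}]$ since $i_{j-2}\ne i_j$. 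You had not identified this induction or the role of Proposition~\ref{prop:ambient representation 2}(2), and your proposed route via the vanishing of the full cycle does not obviously close.
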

\begin{proof}
(1) By Proposition \ref{prop:ambient representation} (4), it remains to show the ``if" part. 
Suppose now $(i,j)\not\in Q(\Gamma)_1$. Then, by Proposition \ref{prop:ambient representation} (1), we must have $f_{i,j}=f_{i_{s-1},i_s}\circ\cdots\circ f_{i_1,i_2}$ where $i_1=i,...,i_s=j$ is a path in $Q(\Gamma)_1$ and $s\geq 3$. Since $f_{i,j}\neq 0$, by Proposition \ref{prop:ambient representation} (3), $[L_{i_{s-1}}]\neq [L_i]$. Since $[L_{i_{s-1}}]$ is adjacent to $[L_j]$, and $f_{i,j}(\overline L_i)\subset f_{i_{s-1},j}(\overline L_{i_{s-1}})$, $[L_i]$ is not adjacent to $[L_j]$ by local linear independence at $[L_j]$.

(2) According to part (1), it suffices to show that there is no cycle in $G$ with length $\geq 3$. Suppose $(i_1,...,i_s,i_1)$ is a cycle in $G$ with  $s\geq 3$. By local linear independence and  induction, we have $f_{i_{j-1},i_j}\circ\cdots\circ f_{i_1,i_2}\neq 0$, hence $f_{i_1,i_j}=f_{i_{j-1},i_j}\circ\cdots\circ f_{i_1,i_2}$ for all $1\leq j\leq s$. Indeed, if $f_{i_{j-1},i_j}\circ\cdots\circ f_{i_1,i_2}= 0$, then by the inductive hypothesis and Proposition \ref{prop:ambient representation 2} (2) we have $$f_{i_1,i_{j-1}}=f_{i_{j-2},i_{j-1}}\circ\cdots\circ f_{i_1,i_2}= f_{i_j,i_{j-1}}\circ f_{i_1,i_j}.$$
This contradicts the local linear independence at $[L_{i_{j-1}}]$.
Now setting $j=s$ we have $f_{i_1,i_s}=f_{i_{s-1},i_s}\circ f_{i_1,i_{s-1}}$. This again contradicts with the local linear independence at $[L_s]$. 

(3) According to (1), the convex hull gives a path in $T$ (and also in $Q(\Gamma)$) from $i$ to $j$. Since $T$ is a tree, this path must be the same as $i_1,...,i_s$, which proves the first part. The expression of $f_{i,j}$ follows from Proposition \ref{prop:ambient representation} (1) since there is only one path in $Q(\Gamma)$ from $i$ to $j$.

(4) We only need to check that $\Gamma\backslash [L]$ is convex, which follows directly from (3).
\end{proof}
\begin{rem}\label{rem:prelinked 2}
Suppose $\Gamma$ is locally linearly independent. For $[L_1],[L_2]\in \Gamma$, by Lemma \ref{lem:locally linearly independent underlying graph} there is exactly one minimal path $\ell$ in $Q(\Gamma)$ from $[L_1]$ to $[L_2]$, hence $f_{\ell}=f_{1,2}\neq 0$. By Remark \ref{rem:pre-linked}, $LF_r(\Gamma)_0$ is a pre-linked Grassmannian. Note also that in this case the projective subrepresentations of $M_\Gamma$ corresponds to simple points of $LF_r(\Gamma)_0$ in the sense of Osserman (\cite[Definition A.1.4]{Ohrk}).
\end{rem}

\begin{ex}\label{ex:local linear indep}
We give a few examples of locally linearly independent configurations. See also Figure \ref{fig: quiver linked grassmannian} for their associated quiver.

(1) We say that $\Gamma$ is a \textit{convex chain} if it is the convex hull in $\mathcal B^0_d$ of two lattice classes. A convex chain is a locally linearly independent configuration by Lemma \ref{lem:convex hull of two points}.

(2) We say that $\Gamma=\{[L_i]\}_{i\in I}$ is a \textit{star-shaped configuration} if there is an $i_0\in I$ such that $L_{i_0}=\mathrm{span}\{e_1,...,e_d\}$ and  there exists disjoint subsets $J_i\subset \{1,...,d\}$ such that 
$$L_i=\mathrm{span}\{ \pi^{\epsilon_j}e_j|\epsilon_j=-1\mathrm{\ if}\ j\in 
J_i\mathrm{\ and\ }\epsilon_j=0\mathrm{\ otherwise.}\}\ \forall\ i\in I\backslash \{i_0\}.$$

A star-shaped configuration is locally linearly independent: it is straightforward to verify that $[L_i]$ is only adjacent to $[L_{i_0}]$ and
$f_{i,i_0}(\ov L_i)=\ker f_{i_0.i}=\mathrm{span}(\overline e_j)_{j\in J_i}$
for $i\in I\backslash\{i_0\}$.

(3) One can check that for any tree $T$, there is a locally linearly independent configuration $\Gamma\subset\mathfrak B^0_d$ whose associated tree as in Lemma \ref{lem:locally linearly independent underlying graph} is exactly $T$. Indeed, let $V(T)$ be the set of vertices of $T$ and $d=|V(T)|$. Pick a basis $\{e_v\}_{v\in V(T)}$ of $V$. For $u,v\in V(T)$ denote by $p_{u,v}$ the minimal path in $T$ connecting $u$ and $v$. Fix a root $u_0$ of $T$, and denote by $a_{u,v}$ the number of edges in $p_{u,u_0}\cap p_{u,v}$. Let $L_u\subset V$ be the lattice generated by $\{\pi^{a_{u,v}}e_{v}\}_{v}$. Then the configuration $\Gamma=\{[L_u]\}_{u\in V(T)}$ is convex and locally linearly independent, and its associated tree is naturally identified with $T$. This follows from the fact that for any $u,v$, the convex hull of $[L_u]$ and $[L_v]$ is the set of all $[L_w]$ such that $w$ is a vertex in $p_{u,v}$. We leave the details to the reader.
\end{ex}

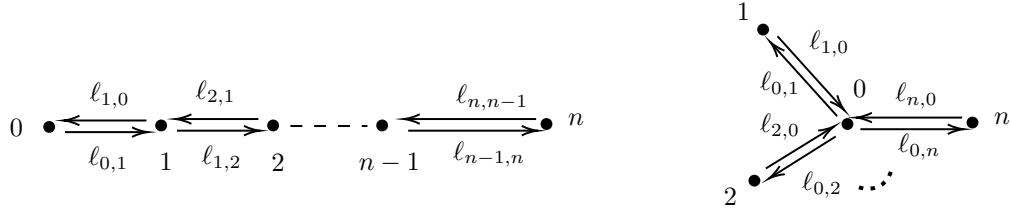
\begin{figure}[H]
\tikzset{every picture/.style={line width=0.75pt}} 

\begin{tikzpicture}[x=0.55pt,y=0.55pt,yscale=-1,xscale=1]
\draw [color={rgb, 255:red, 0; green, 0; blue, 0 }  ,draw opacity=1 ]   (188,119.2) -- (133,119.2) ;
\draw [shift={(131,119.2)}, rotate = 360] [color={rgb, 255:red, 0; green, 0; blue, 0 }  ,draw opacity=1 ][line width=0.75]    (10.93,-3.29) .. controls (6.95,-1.4) and (3.31,-0.3) .. (0,0) .. controls (3.31,0.3) and (6.95,1.4) .. (10.93,3.29)   ;
\draw  [color={rgb, 255:red, 0; green, 0; blue, 0 }  ,draw opacity=1 ][fill={rgb, 255:red, 0; green, 0; blue, 0 }  ,fill opacity=1 ] (124.5,123.61) .. controls (124.5,121.74) and (122.98,120.22) .. (121.11,120.22) .. controls (119.24,120.22) and (117.72,121.74) .. (117.72,123.61) .. controls (117.72,125.48) and (119.24,127) .. (121.11,127) .. controls (122.98,127) and (124.5,125.48) .. (124.5,123.61) -- cycle ;
\draw [color={rgb, 255:red, 0; green, 0; blue, 0 }  ,draw opacity=1 ]   (132,127.2) -- (187,127.2) ;
\draw [shift={(189,127.2)}, rotate = 180] [color={rgb, 255:red, 0; green, 0; blue, 0 }  ,draw opacity=1 ][line width=0.75]    (10.93,-3.29) .. controls (6.95,-1.4) and (3.31,-0.3) .. (0,0) .. controls (3.31,0.3) and (6.95,1.4) .. (10.93,3.29)   ;
\draw [color={rgb, 255:red, 0; green, 0; blue, 0 }  ,draw opacity=1 ]   (265,118.2) -- (210,118.2) ;
\draw [shift={(208,118.2)}, rotate = 360] [color={rgb, 255:red, 0; green, 0; blue, 0 }  ,draw opacity=1 ][line width=0.75]    (10.93,-3.29) .. controls (6.95,-1.4) and (3.31,-0.3) .. (0,0) .. controls (3.31,0.3) and (6.95,1.4) .. (10.93,3.29)   ;
\draw  [color={rgb, 255:red, 0; green, 0; blue, 0 }  ,draw opacity=1 ][fill={rgb, 255:red, 0; green, 0; blue, 0 }  ,fill opacity=1 ] (201.5,122.61) .. controls (201.5,120.74) and (199.98,119.22) .. (198.11,119.22) .. controls (196.24,119.22) and (194.72,120.74) .. (194.72,122.61) .. controls (194.72,124.48) and (196.24,126) .. (198.11,126) .. controls (199.98,126) and (201.5,124.48) .. (201.5,122.61) -- cycle ;
\draw [color={rgb, 255:red, 0; green, 0; blue, 0 }  ,draw opacity=1 ]   (209,126.2) -- (264,126.2) ;
\draw [shift={(266,126.2)}, rotate = 180] [color={rgb, 255:red, 0; green, 0; blue, 0 }  ,draw opacity=1 ][line width=0.75]    (10.93,-3.29) .. controls (6.95,-1.4) and (3.31,-0.3) .. (0,0) .. controls (3.31,0.3) and (6.95,1.4) .. (10.93,3.29)   ;
\draw [color={rgb, 255:red, 0; green, 0; blue, 0 }  ,draw opacity=1 ]   (456,118.2) -- (367,118.2) ;
\draw [shift={(365,118.2)}, rotate = 360] [color={rgb, 255:red, 0; green, 0; blue, 0 }  ,draw opacity=1 ][line width=0.75]    (10.93,-3.29) .. controls (6.95,-1.4) and (3.31,-0.3) .. (0,0) .. controls (3.31,0.3) and (6.95,1.4) .. (10.93,3.29)   ;
\draw  [color={rgb, 255:red, 0; green, 0; blue, 0 }  ,draw opacity=1 ][fill={rgb, 255:red, 0; green, 0; blue, 0 }  ,fill opacity=1 ] (353.5,122.61) .. controls (353.5,120.74) and (351.98,119.22) .. (350.11,119.22) .. controls (348.24,119.22) and (346.72,120.74) .. (346.72,122.61) .. controls (346.72,124.48) and (348.24,126) .. (350.11,126) .. controls (351.98,126) and (353.5,124.48) .. (353.5,122.61) -- cycle ;
\draw [color={rgb, 255:red, 0; green, 0; blue, 0 }  ,draw opacity=1 ]   (368,126.2) -- (455,126.2) ;
\draw [shift={(457,126.2)}, rotate = 180] [color={rgb, 255:red, 0; green, 0; blue, 0 }  ,draw opacity=1 ][line width=0.75]    (10.93,-3.29) .. controls (6.95,-1.4) and (3.31,-0.3) .. (0,0) .. controls (3.31,0.3) and (6.95,1.4) .. (10.93,3.29)   ;
\draw  [color={rgb, 255:red, 0; green, 0; blue, 0 }  ,draw opacity=1 ][fill={rgb, 255:red, 0; green, 0; blue, 0 }  ,fill opacity=1 ] (278.5,122.61) .. controls (278.5,120.74) and (276.98,119.22) .. (275.11,119.22) .. controls (273.24,119.22) and (271.72,120.74) .. (271.72,122.61) .. controls (271.72,124.48) and (273.24,126) .. (275.11,126) .. controls (276.98,126) and (278.5,124.48) .. (278.5,122.61) -- cycle ;
\draw  [color={rgb, 255:red, 0; green, 0; blue, 0 }  ,draw opacity=1 ][fill={rgb, 255:red, 0; green, 0; blue, 0 }  ,fill opacity=1 ] (466.5,121.61) .. controls (466.5,119.74) and (464.98,118.22) .. (463.11,118.22) .. controls (461.24,118.22) and (459.72,119.74) .. (459.72,121.61) .. controls (459.72,123.48) and (461.24,125) .. (463.11,125) .. controls (464.98,125) and (466.5,123.48) .. (466.5,121.61) -- cycle ;
\draw  [dash pattern={on 4.5pt off 4.5pt}]  (287,123) -- (340,123) ;
\draw [color={rgb, 255:red, 0; green, 0; blue, 0 }  ,draw opacity=1 ]   (749,117.2) -- (678,117.2) ;
\draw [shift={(676,117.2)}, rotate = 360] [color={rgb, 255:red, 0; green, 0; blue, 0 }  ,draw opacity=1 ][line width=0.75]    (10.93,-3.29) .. controls (6.95,-1.4) and (3.31,-0.3) .. (0,0) .. controls (3.31,0.3) and (6.95,1.4) .. (10.93,3.29)   ;
\draw [color={rgb, 255:red, 0; green, 0; blue, 0 }  ,draw opacity=1 ]   (679,125.2) -- (748,125.2) ;
\draw [shift={(750,125.2)}, rotate = 180] [color={rgb, 255:red, 0; green, 0; blue, 0 }  ,draw opacity=1 ][line width=0.75]    (10.93,-3.29) .. controls (6.95,-1.4) and (3.31,-0.3) .. (0,0) .. controls (3.31,0.3) and (6.95,1.4) .. (10.93,3.29)   ;
\draw  [color={rgb, 255:red, 0; green, 0; blue, 0 }  ,draw opacity=1 ][fill={rgb, 255:red, 0; green, 0; blue, 0 }  ,fill opacity=1 ] (759.5,120.61) .. controls (759.5,118.74) and (757.98,117.22) .. (756.11,117.22) .. controls (754.24,117.22) and (752.72,118.74) .. (752.72,120.61) .. controls (752.72,122.48) and (754.24,124) .. (756.11,124) .. controls (757.98,124) and (759.5,122.48) .. (759.5,120.61) -- cycle ;
\draw  [color={rgb, 255:red, 0; green, 0; blue, 0 }  ,draw opacity=1 ][fill={rgb, 255:red, 0; green, 0; blue, 0 }  ,fill opacity=1 ] (673.5,121.61) .. controls (673.5,119.74) and (671.98,118.22) .. (670.11,118.22) .. controls (668.24,118.22) and (666.72,119.74) .. (666.72,121.61) .. controls (666.72,123.48) and (668.24,125) .. (670.11,125) .. controls (671.98,125) and (673.5,123.48) .. (673.5,121.61) -- cycle ;
\draw  [color={rgb, 255:red, 0; green, 0; blue, 0 }  ,draw opacity=1 ][fill={rgb, 255:red, 0; green, 0; blue, 0 }  ,fill opacity=1 ] (609.5,160.61) .. controls (609.5,158.74) and (607.98,157.22) .. (606.11,157.22) .. controls (604.24,157.22) and (602.72,158.74) .. (602.72,160.61) .. controls (602.72,162.48) and (604.24,164) .. (606.11,164) .. controls (607.98,164) and (609.5,162.48) .. (609.5,160.61) -- cycle ;
\draw [color={rgb, 255:red, 0; green, 0; blue, 0 }  ,draw opacity=1 ]   (614,152.2) -- (659.3,124.25) ;
\draw [shift={(661,123.2)}, rotate = 508.32] [color={rgb, 255:red, 0; green, 0; blue, 0 }  ,draw opacity=1 ][line width=0.75]    (10.93,-3.29) .. controls (6.95,-1.4) and (3.31,-0.3) .. (0,0) .. controls (3.31,0.3) and (6.95,1.4) .. (10.93,3.29)   ;
\draw [color={rgb, 255:red, 0; green, 0; blue, 0 }  ,draw opacity=1 ]   (662,130.2) -- (617.69,158.13) ;
\draw [shift={(616,159.2)}, rotate = 327.77] [color={rgb, 255:red, 0; green, 0; blue, 0 }  ,draw opacity=1 ][line width=0.75]    (10.93,-3.29) .. controls (6.95,-1.4) and (3.31,-0.3) .. (0,0) .. controls (3.31,0.3) and (6.95,1.4) .. (10.93,3.29)   ;
\draw  [color={rgb, 255:red, 0; green, 0; blue, 0 }  ,draw opacity=1 ][fill={rgb, 255:red, 0; green, 0; blue, 0 }  ,fill opacity=1 ] (615.5,56.61) .. controls (615.5,54.74) and (613.98,53.22) .. (612.11,53.22) .. controls (610.24,53.22) and (608.72,54.74) .. (608.72,56.61) .. controls (608.72,58.48) and (610.24,60) .. (612.11,60) .. controls (613.98,60) and (615.5,58.48) .. (615.5,56.61) -- cycle ;
\draw [color={rgb, 255:red, 0; green, 0; blue, 0 }  ,draw opacity=1 ]   (622,61.2) -- (666.66,110.71) ;
\draw [shift={(668,112.2)}, rotate = 227.95] [color={rgb, 255:red, 0; green, 0; blue, 0 }  ,draw opacity=1 ][line width=0.75]    (10.93,-3.29) .. controls (6.95,-1.4) and (3.31,-0.3) .. (0,0) .. controls (3.31,0.3) and (6.95,1.4) .. (10.93,3.29)   ;
\draw [color={rgb, 255:red, 0; green, 0; blue, 0 }  ,draw opacity=1 ]   (663,116.2) -- (618.34,66.69) ;
\draw [shift={(617,65.2)}, rotate = 407.95] [color={rgb, 255:red, 0; green, 0; blue, 0 }  ,draw opacity=1 ][line width=0.75]    (10.93,-3.29) .. controls (6.95,-1.4) and (3.31,-0.3) .. (0,0) .. controls (3.31,0.3) and (6.95,1.4) .. (10.93,3.29)   ;
\draw [line width=1.5]  [dash pattern={on 1.69pt off 2.76pt}]  (678,165.2) .. controls (684,167.2) and (696,167.2) .. (700,154.2) ;

\draw (92,116.4) node [anchor=north west][inner sep=0.75pt]    {$0$};
\draw (195,139.4) node [anchor=north west][inner sep=0.75pt]    {$1$};
\draw (272,140.4) node [anchor=north west][inner sep=0.75pt]    {$2$};
\draw (332,139.4) node [anchor=north west][inner sep=0.75pt]    {$n-1$};
\draw (476,112.4) node [anchor=north west][inner sep=0.75pt]    {$n$};
\draw (146,135.4) node [anchor=north west][inner sep=0.75pt]    {$\ell _{0,1}$};
\draw (147,91.4) node [anchor=north west][inner sep=0.75pt]    {$\ell _{1,0}$};
\draw (220,89.4) node [anchor=north west][inner sep=0.75pt]    {$\ell _{2,1}$};
\draw (224,134.4) node [anchor=north west][inner sep=0.75pt]    {$\ell _{1,2}$};
\draw (399,92.4) node [anchor=north west][inner sep=0.75pt]    {$\ell _{n,n-1}$};
\draw (397,131.4) node [anchor=north west][inner sep=0.75pt]    {$\ell _{n-1,n}$};
\draw (769,111.4) node [anchor=north west][inner sep=0.75pt]    {$n$};
\draw (701,91.4) node [anchor=north west][inner sep=0.75pt]    {$\ell _{n,0}$};
\draw (703,126.4) node [anchor=north west][inner sep=0.75pt]    {$\ell _{0,n}$};
\draw (672,89.4) node [anchor=north west][inner sep=0.75pt]    {$0$};
\draw (637,152.4) node [anchor=north west][inner sep=0.75pt]    {$\ell _{0,2}$};
\draw (608,112.4) node [anchor=north west][inner sep=0.75pt]    {$\ell _{2,0}$};
\draw (583,163.4) node [anchor=north west][inner sep=0.75pt]    {$2$};
\draw (592,36.4) node [anchor=north west][inner sep=0.75pt]    {$1$};
\draw (642,55.4) node [anchor=north west][inner sep=0.75pt]    {$\ell _{1,0}$};
\draw (608,83.4) node [anchor=north west][inner sep=0.75pt]    {$\ell _{0,1}$};

\end{tikzpicture}
\caption{The quiver associated to a convex chain (left) and a star-shaped configuration (right). The extra relations in the path algebra of the left quiver are given by $\ell_{i,i+1}\cdot \ell_{i+1,i}=0$ for $0\leq i\leq n-1$, while the extra relations of the right quiver are given by $\ell_{i,0}\cdot \ell_{0,i}=0$ for $1\leq i\leq n$.}\label{fig: quiver linked grassmannian}
\end{figure}

\subsection{Examples of linked Grassmannians}
In this part, we show some examples of linked Grassmannians, which motivate the study of the subject. These include Osserman's linked Grassmannians, standard local models of Shimura varieties of PEL-type and moduli of linked linear series. We continue using $R,\pi,K$ as before. 

\subsubsection{Osserman's linked Grassmannian}\label{subsubsec:linked grassmannian}
We recall Osserman's notion of linked Grassmannian, which was first introduced in \cite{Olls} for the construction of a moduli scheme of limit linear series on reducible nodal curves. To have a better comparison with our notion of linked Grassmannian, we also adopt the notion of linked chain from \cite{murray2016linked}.

\begin{defn}
Let $S$ be an integral and Cohen-Macaulay scheme, $E_1,...,E_n$ be vector bundles on $S$, each of rank $d$. Suppose we are given $g_i:E_i\to E_{i+1}$ and $h_i:E_{i+1}\to E_i$ for $1\leq i\leq n-1$. 

(1) (\cite[1.1]{murray2016linked}) Let $s\in \mathscr O_S$, we say that $\mathbf E=(E_\bullet)$ is an $s$-linked chain on $S$ if:
\begin{enumerate}
\item[(i)] $g_i\circ h_i=h_i\circ g_i=s\cd\id$, for all $i$.
\item[(ii)] The closed subscheme of $S$ where $\rk(g_i)+\rk(h_i)<d$ is empty.
\item[(iii)] The subschemes of $S$ where $\rk (g_i)>\rk(g_{i+1}\circ g_i)$ and where $\rk (h_{i+1})>\rk(h_{i}\circ h_{i+1})$ are both empty.  
\end{enumerate}

(2) ({\cite[A.2-A.4]{Olls}}\label{defn:osserman linked grass})
Fix $r<d$. Let $\mathbf E$ be an $s$-linked chain. 
Let $\mathcal{OLG}_r(\mathbf E)$ be the functor associating to each $S$-scheme $T$ the set of subbundles $V_1,...,V_n$ of $E_{1,T},...,E_{n,T}$ of rank $r$ satisfying $g_{i,T}(V_i)\subset V_{i+1}$ and $h_{i,T}(V_{i+1})\subset V_i$ for all $i$. It is represented by a scheme $OLG_r(\mathbf E)$ projective over $S$, which we call an \textit{Osserman's linked Grassmannian} over $S$.
\end{defn}

\begin{prop}\label{prop:linked grass N linked chain}
$\mathrm{(1)}$ An Osserman's linked Grassmannian associated to a $\pi$-linked chain on $\spec(R)$ is a linked Grassmannian associated to a convex chain in $\mathfrak B^0_d$ and vice versa.  $\mathrm{(2)}$ An Osserman's linked Grassmannian associated to a $0$-linked chain on $\spec(\kappa)$ is the special fiber of a linked Grassmannian associated to a convex chain in $\mathfrak B^0_d$ and vice versa. 
\end{prop}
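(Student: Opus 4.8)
The plan is to build an explicit dictionary between $s$-linked chains and convex chains and then check that the two moduli functors agree. For (1), start from a $\pi$-linked chain $(E_\bullet,g_\bullet,h_\bullet)$ on $\spec(R)$. Since $g_i\ci h_i=h_i\ci g_i=\pi\cd\id$ is invertible after inverting $\pi$, each $g_i$ becomes an isomorphism over $K$; using these I would identify all the generic fibres $E_i\ot K$ with a single $d$-dimensional $K$-space $V$ so that every $g_i\ot K$ is $\id_V$, and set $L_i\sub V$ to be the image of the torsion-free $R$-module $E_i$. Then $g_i$ is the inclusion $L_i\hr L_{i+1}$ and $h_i$ is multiplication by $\pi$, so $\pi L_{i+1}\sub L_i\sub L_{i+1}$ and $\Gamma:=\{[L_i]\}$ is totally ordered by inclusion. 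In this situation condition (ii) holds automatically (one computes $\rk\ov g_i+\rk\ov h_i=d$ on the special fibre), while condition (iii) is exactly what forces $\Gamma$ to be convex, hence a convex chain in the sense of Example~\ref{ex:local linear indep}(1): it gives $L_i\cap\pi L_{i+2}=\pi L_{i+1}$, i.e.\ the chain does not ``fold back'', equivalently $n_{0,n}=\sum_i n_{i,i+1}$ and $n_{n,0}=\sum_i n_{i+1,i}$ as in Lemma~\ref{lem:convex hull of two points}(1). Conversely, given a convex chain I would choose representatives in the normal form of Lemma~\ref{lem:convex hull of two points}, put $E_i:=L_i$, $g_i:=F_{i,i+1}$, $h_i:=F_{i+1,i}$, and check (i)--(iii) (again (ii) is automatic and (iii) is the normal-form computation).

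Once the data are matched, $OLG_r(\mathbf E)=LG_r(\Gamma)$ follows by comparing functors of points: a $T$-valued point of either side is a tuple of rank-$r$ subbundles $V_i\sub L_i\ot\mO_T$, and the condition ``$g_i(V_i)\sub V_{i+1}$ and $h_i(V_{i+1})\sub V_i$'' is equivalent to the requirement of Definition~\ref{defn:mustafin degeneration functor} that each $V_i$ be preserved by all maps $\pi^k\colon L_a\to L_b$. Indeed every such $\pi^k$ is a composite of the $F_{a,b}$'s (themselves composites of the $g_i$'s and $h_i$'s along the chain) with a power of $\pi\cd\id$, and $\pi\cd\id_{L_b}=g_{b-1}\ci h_{b-1}$ (or $h_b\ci g_b$); so preservation under all $g_i,h_i$ gives preservation under all $\pi^k$, and the converse is trivial. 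Representability and independence of the chosen representatives are already covered by Proposition~\ref{prop:mustafin degeneration functor representable}.

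For (2) the cleanest route is to reduce to (1) modulo $\pi$. First, for a convex chain $\Gamma$ the representation $M_\Gamma$ of Proposition~\ref{prop:quiver representation and special fiber} is itself a $0$-linked chain on $\spec(\kappa)$: its quiver $Q(\Gamma)$ is the $A$-type double quiver with relations $f_{i,i+1}\ci f_{i+1,i}=f_{i+1,i}\ci f_{i,i+1}=0$ (Figure~\ref{fig: quiver linked grassmannian} and Lemma~\ref{lem:convex hull of two points}(2)), which is condition (i), while conditions (ii)--(iii) translate into Lemma~\ref{lem:convex hull of two points}(2),(3); since a convex chain lies in one apartment, $M_\Gamma$ is projective by Proposition~\ref{prop:ambient representation 2}(3), i.e.\ $M_\Gamma\cong\bigoplus_i P_i^{m_i}$. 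By construction $OLG_r(M_\Gamma)=LG_r(\Gamma)_0$. Conversely, given a $0$-linked chain $\mathbf E$ on $\spec(\kappa)$ I would lift it to a $\pi$-linked chain $\widetilde{\mathbf E}$ on $\spec(R)$, apply (1) to produce a convex chain $\Gamma$ with $LG_r(\Gamma)=OLG_r(\widetilde{\mathbf E})$, and pass to special fibres: the mod-$\pi$ reduction of a $\pi$-linked chain is a $0$-linked chain, and the induced isomorphism of special fibres gives $LG_r(\Gamma)_0=OLG_r(\mathbf E)$. The lift exists because, by (ii)--(iii), $\mathbf E$ decomposes as $\bigoplus_i P_i^{m_i}$, and each $P_i$ is the mod-$\pi$ reduction of an explicit rank-one $\pi$-linked chain (maps $\id$ and $\pi$ arranged so that vertex $i$ is the source); one takes the direct sum.

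I expect the main obstacle to be precisely this last structural input: that conditions (ii) and (iii) force a $0$-linked chain over $\kappa$ to be a direct sum of the indecomposable projectives $P_i$ (equivalently, to lift to a $\pi$-linked chain, equivalently to occur as $M_\Gamma$ for a convex chain). I would prove it by a ``defining-basis'' argument run vertex by vertex --- at vertex $i$, split off a complement of $\mathrm{Im}\,g_{i-1}\op\mathrm{Im}\,h_i$, which then propagates along the chain to a copy of $P_i$ --- in direct analogy with the proof of Proposition~\ref{prop:ambient representation 2}(3). The remaining ingredients (the generic-fibre identification, the verification of (i)--(iii) from Lemma~\ref{lem:convex hull of two points}, and the functor comparison) are routine.
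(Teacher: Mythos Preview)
Your proposal is correct and takes essentially the same approach as the paper. The only noteworthy difference is in part~(2): the structural input you flag as the main obstacle---that a $0$-linked chain over $\kappa$ decomposes as $\bigoplus_i P_i^{m_i}$---is dispatched in the paper by citing an external lemma of Osserman, which furnishes precisely the basis vectors $\ov W_j\subset E_j$ whose forward and backward images span every $E_i$; your defining-basis argument would amount to reproving that lemma.
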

\begin{proof}
(1)
 Given a $\pi$-linked chain $\mathbf E$ on $\spec(R)$ as in Definition \ref{defn:osserman linked grass}, we may assume that none of the $g_i$'s or $h_i$'s is an isomorphism. Then mapping all $E_i$ to $E_1$ gives an identification of $E_i$ with a lattice in $E_1\otimes K$. More  precisely, denote $L_i=h_1\circ\cdots\circ h_{i-1}(E_i)\subset E_1\subset E_1\otimes K$, then condition (i) implies that $\pi L_i\subset  L_{i+1}\subset L_i$. We claim that $\Gamma=\{[L_i]\}_{1\leq i\leq n}$ is the convex hull of $[L_1]$ and $[L_n]$. Indeed, for each $i$, we have $L_i\subset L_1\cap \pi^{-1}L_{i+1}$. If $u\in L_1\cap \pi^{-1}L_{i+1}$ is a vector that does not lie in $L_i$, then $\pi u\in L_{i+1}\subset L_i$. It follows that $h_i(\pi u)\neq 0$ and $h_1\circ\cdots\circ h_{i}(\pi u)=0$ over the closed point of $\spec(R)$, which contradicts condition (iii). Hence $L_i=L_1\cap \pi^{-1}L_{i+1}$, which implies the claim. Hence $OLG_r(\mathbf E)=LG_r(\Gamma)$.

The converse direction is basically covered in \cite[\S 3.2]{hahn2020mustafin}. We include the proof for the sake of completeness. Suppose 
$L_1\subset L_2\subset \cdots\subset L_n$ gives the convex hull of $[L_1]$ and $[L_n]$ in $\mathfrak B^0_d$, hence $[L_i]$ is adjacent to $[L_{i+1}]$. Then the morphisms 
$$F_{i,i+1}\colon L_i\hookrightarrow L_{i+1}\mathrm{\ and\ }F_{i+1,i}\colon L_{i+1}\rightarrow L_i,\mathrm{\ where\ } F_{i+1,i}(z)=\pi z,$$ give a $\pi$-linked chain $\mathbf E$ on $\spec(R)$. Indeed, note that $F_{i,i+1}$ and $F_{i+1,i}$ are
exactly the maps  constructed in Proposition \ref{prop:mustafin degeneration functor representable}.    Condition (i) in Definition \ref{defn:osserman linked grass} (1) is satisfied by construction; conditions (ii) and (iii) are just Lemma \ref{lem:convex hull of two points} (2) and (3) since $F_{i,j}$ is an isomorphism over the generic point of $S$.
Hence $LG_r(\Gamma)=OLG_r(\mathbf E)$ is an Osserman's linked Grassmannian. 

(2) According to part (1), it remains to show that an $OLG_r(\mathbf E)$ of a $0$-linked $\mathbf E$ on $\kappa$ is the special fiber of $LG_r(\Gamma)$ for a convex chain $\Gamma$. Note that the bundles $E_\bullet$ are $\kappa$-vector spaces of dimension $d$. Again, we may assume that none of the $g_i$'s or $h_i$'s is an isomorphism. For $l<i$, set 
$$g_{l,i}:=g_{i-1}\circ g_{i-2}\circ\cdots\circ g_l\mathrm{\ and\ } h_{i,l}:=h_l\circ h_{l+1}\circ\cdots\circ h_{i-1},$$ and $g_{i,i}=h_{i,i}=\mathrm{id}$.

By \cite[Lemma 2.3]{osserman2014linked}, we can find a set of linearly independent vectors $\ov W_j=\{\overline u_{j,1},...,\overline u_{j,k_j}\}\subset E_j$ for each $1\leq j\leq n$ such that $\sum_jk_j=d$ and for each $i$, the set of vectors
$$\Big(\bigcup_{1\leq j\leq i-1}g_{j,i}(\ov W_j)\Big)\cup \Big( \bigcup_{i\leq j\leq n}h_{j,i}(\ov W_j)\Big)$$
generates $E_i$. Now pick lifts $W_j\subset R^d$ of $h_{j,1}(\ov W_j)\subset E_1$, and let $L_i\subset K^d$ be the lattice generated by 
$$\Big(\bigcup_{1\leq  j\leq i-1}\pi^{-j} W_j\Big)\cup \Big( \bigcup_{i\leq j\leq n}\pi^{-i}W_j\Big).$$
It is then easy to verify that $\Gamma:=\{[L_1],...,[L_n]\}$ is the convex hull of $[L_1]$ and $[L_n]$, and the special fiber of $LG_r(\Gamma)$ is isomorphic to $OLG_r(\mathbf E)$.
\end{proof}

\subsubsection{Standard Local models}\label{subsubsec:local model}
(\cite[\S 4.1]{gortz2001flatness}, \cite[Definition 3.27]{rapoport2016period}) 
Let $e_1\hh e_d$ be the standard basis of $K^d$ and $\Gamma$ be the set of the lattices: $$L_i:=\lag\pi^{-1}e_1,...,\pi^{-1}e_i,e_{i+1},...,e_d\rag,\  0\leq i\leq n-1. $$

The standard local model, denoted $M^{\text{loc}}$, of Shimura varieties is the $R$-scheme parametrizing the functor $\mM$ from $(\text{Sch}_B)$ to $(\text{Sets})$ such that for any $B$-scheme $S$, $\mM(S)$ is the set of all isomorphism classes 
\[
\begin{tikzcd}
L_{0, S} \ar[r] & L_{1,S} \ar[r] & \cdots\ar[r]
 & L_{n-1,S} \ar[r,"\pi"] & L_{0,S} \\
\cF_0  \ar[r]\ar[hookrightarrow]{u} & \cF_1  \ar[r]\ar[hookrightarrow]{u} & \cdots\ar[r] & \cF_{n-1}
  \ar[r]\ar[hookrightarrow]{u} & \cF_0\ar[hookrightarrow]{u}
\end{tikzcd}
\]
where $\cF_i$ is a subbundle of rank $r$ of $L_{i,S}$. Notice that $\Gamma=([L_0],...,[L_{n-1}])$ clearly gives a convex collection of lattices as we have $L_0\subset L_1\subset...\subset L_{n-1}\subset \pi^{-1}L_0$. One then checks that $\mathscr{M}$ agrees with the functor $\mathcal{LG}_{r}(\Gamma)$ in Definition \ref{defn:mustafin degeneration functor}.

The geometry of the special fiber $\ov{M}^{\text{loc}}$ of the standard local model can be interpreted from the perspective of quiver representations. In \cite[\S 4.3]{gortz2001flatness}, G\"ortz concluded that $\ov{M}^{\text{loc}}$ has $\binom{d}{r}$ many irreducible components, indexed by the length-$d$ integer sequences $w(\mu)$, where $w$ is any element in $\SS_d$ and $\mu=(\underbrace{1\hh 1}_{r\text{ times}},0\hh 0)$. Moreover, let $\nu=(\nu_1,...,\nu_d)$ be such a vector, and denote $I=\{k_1\hh k_r\}$ to be the index subset such that $v_k=1$ if and only if $k\in I$. The general element in the component $S_\nu$ corresponds to a projective subrepresentation 
of $M_{\Gamma}$ isomorphic to \[P_{k_1+1}\op\cdots\op P_{k_r+1}\] 
where we set $P_{n+1}=P_1$. This representation-theoretic interpretation for irreducible components (that its general points correspond to projective representations) is similar to our conclusion for locally linearly independent configurations (see Section \ref{sec:geometry of linked flag schemes}).  
Moreover, we would also like to point out another connection to our approach, namely, the Kottowitz-Rapoport stratification of $\ov{M}^{\text{loc}}$ can also be interpreted in the context of quiver representation. The main point is that the group-theoretic data for each stratum correspond to ranks of (compositions of) linear maps viewed from the perspective of quiver representation. See \cite[\S 2]{gortz2010supersingular} for further detail.

\subsubsection{Degeneration of linear series}\label{subsubsec:degenerate linear series}
Another important example comes from studying degeneration of moduli spaces of linear series on algebraic curves. 
We only sketch the general idea here as the details will be carried out in Section \ref{sec:connection to lls}. 

We start with a relative curve $X/\spec(R)$ whose special fiber is a reducible nodal curve (Definition \ref{defn:smoothing family}). Then there is a space $\widetilde{\mathcal G}^2$ in which the moduli of linked linear series is cut out. Moreover, $\widetilde{\mathcal G}^2$ is projective over the relative Picard scheme $\mathrm{Pic}(X/B)$ of line bundles with fixed multidegrees on each fiber of $X$. For any section $s\colon B\rightarrow \mathrm{Pic}(X/B)$, the fiber product $\widetilde{\mathcal G}^2\times_{\mathrm{Pic}(X/B)} B$ is a linked Grassmannian (Proposition \ref{prop:moduli of lls and linked flags}).
It turns out that there exists a forgetful map from $\widetilde{\mathcal{G}}^2$ to another $B$-scheme $\mathcal{G}^2$ inside which one can construct the moduli scheme of limit linear series on $X/\spec(R)$. Such a description of the moduli of limit linear series eventually allows us to prove a criterion for smoothing of limit linear series over arbitrary nodal curves (Theorem \ref{thm:smoothing of lls}).

\section{Geometry of Linked Grassmannians in the Locally Linearly Independent Case}\label{sec:geometry of linked flag schemes}
Recall that one interesting feature of linked Grassmannians is that their special fibers become quiver Grassmannians for quivers with extra relations. 
In this section, we study the topological properties of linked Grassmannians in the locally linearly independent case via analysing their points as quiver representations. 


Through out this section, all schemes are assumed to be $\kappa$-schemes. $\Gamma$ will always denote a locally linearly independent lattice configuration in $\mathfrak B^0_d$, $Q(\Gamma)$ the associated quiver as in Definition \ref{defn:associated quiver}, and $T$ the induced tree of $Q(\Gamma)$ as in Lemma \ref{lem:locally linearly independent underlying graph}. As usual, $E(T)$ and $V(T)$ will denote the sets of edges and vertices respectively.
Note that $\Gamma$, as well as $Q(\Gamma)_0$, is identified with $V(T)$ and we will write $\Gamma=\{[L_v]\}_{v\in V(T)}$.  
Let $M_\Gamma=(\ov L_v:=L_v/\pi L_v)_v$ be the representation of $Q(\Gamma)$ induced by $\Gamma$ as in Proposition \ref{prop:quiver representation and special fiber}. Let $\Gr({\underline x},M_\Gamma)$ be the quiver Grassmannian of $M_\Gamma$ with dimension vector ${\underline x}\in\mathbb Z^{V(T)}_{\geq 0}$. Recall from Proposition \ref{prop:quiver representation and special fiber} that $\Gr(\mathbf r,M_\Gamma)$ is isomorphic to the special fiber $LG_r(\Gamma)_0$ of the linked Grassmannian $LG_r(\Gamma)$.

We denote by $\vec E(T)$ the set of directed edges of $T$, hence $\vec E(T)=Q(\Gamma)_1$. For each $e$, let $\vec e$ denote an orientation on $e$ and $\cev e$ the reversed orientation of $\vec e$. As usual, $s(\vec e)=t(\cev e)$ and $t(\vec e)=s(\cev e)$ will denote the source and target of $\vec e$ respectively. 
For each vertex $v$, denote by $\vec E_v$ the set of edges containing $v$ which are oriented outwards from $v$, and $\cev E_v$ the set of oriented edges obtained by reversing the edges in $\vec E_v$. For each edge $\vec e$, let $A_{\vec e}$ be the set of vertices $v$ such that the minimal path from $v$ to $s(\vec e)$ does not pass through $t(\vec e)$. Then $A_{\cev e}=V(T)\backslash A_{\vec e}$. See Figure \ref{fig:graph} for an illustration.

\tikzset{every picture/.style={line width=0.75pt}}
\begin{figure}[ht]	
\begin{tikzpicture}[x=0.45pt,y=0.45pt,yscale=-0.9,xscale=0.9]
\import{./}{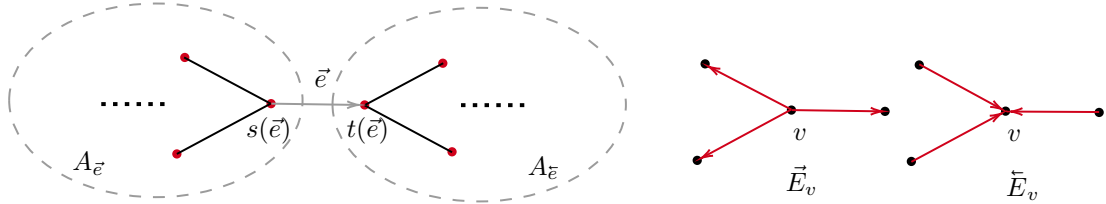}
\end{tikzpicture}		
\caption{The left part is the set of vertices (in red) contained in $A_{\vec e}$ and $A_{\cev e}$. The right part is the set of oriented edges (in red) contained in $\vec E_v$ and $\cev E_v$.}
\label{fig:graph}
\end{figure}

Recall from Notation \ref{nt:morphisms in a configuration} that we have maps $f_{v,v'}\colon \ov L_v\rightarrow \ov L_{v'}$ defining $M_\Gamma$ induced by the inclusion $L_v\xrightarrow{\pi^n} L_{v'}$, where $n$ is the minimal number such that such inclusion exists. According to our convention on quiver representations, the map $f_{s(\vec e),t(\vec e)}$ will also be denoted by $f_{\vec e}$. 

We will consider the following representations of $Q(\Gamma)$: 
for $u\in V(T)$ let $P_u$ be the dimension-$\mathbf 1$ projective representation associated to $u$ as in Lemma \ref{lem:indecomp_proj}. In other words, for $\vec e\in \vec E(T)$, we set $f_{\vec e}=\mathrm{id}$ if $\vec e$ is pointing outwards from $u$ and $f_{\vec e}=0$ otherwise. 
Take a directed edge $\vec \iota$. We construct a representation $R_{\vec \iota}=(U_v)_{v\in V(T)}$ as follows:  we set $U_v=\kappa$ if $v\in A_{\vec \iota}$ and $U_v=0$ if $v\in A_{\cev \iota}$. For $\vec e\in \vec E(T)$, if $s(\vec e)\in A_{\vec \iota}$ and $\vec e$ is pointing outwards from $s(\vec \iota)$, we set $f_{\vec e}=\mathrm{id}$, otherwise $f_{\vec e}=0$. See Figure \ref{fig:local indep indecomp} for an example.

\tikzset{every picture/.style={line width=0.75pt}}
\begin{figure}[ht]	
\begin{tikzpicture}[x=0.5pt,y=0.5pt,yscale=-0.9,xscale=0.9]
\import{./}{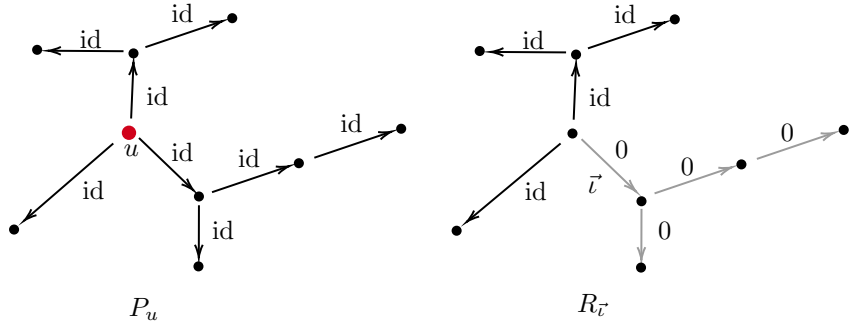}
\end{tikzpicture}		
\caption{Two types of representations of $Q(\Gamma)$, where $\Gamma$ is locally linearly independent. The missing arrows are assumed to be all zero.}
\label{fig:local indep indecomp}
\end{figure}

We would like to comment that, in this section, $R_{\vec \iota}$ will often occur in pairs with $R_{\cev\iota}$. More specifically, if $u$ is a vertex of the edge $\iota$, then $R_{\vec \iota}\oplus R_{\cev\iota}$ can be realized as a specialization of $P_u$. See Proposition~\ref{prop:linked degeneration stratification} and Example~\ref{ex:r=1} for an illustration of the later fact.

Finally, for all $n\in \mathbb Z_{\geq 0}$ recall that we denote by $[n]$ the set $\{1,...,n\}$.

\subsection{Subrepresentations of $M_\Gamma$}
\addtocontents{toc}{\protect\setcounter{tocdepth}{2}}
In this subsection we consider subrepresentations of $M_\Gamma$ (of arbitrary dimension). It turns out that their decompositions are quite simple. This provides a very efficient way via quiver representation for analyzing the geometry of $LG_r(\Gamma)_0=\Gr(\mathbf r,M_\Gamma)$.
\begin{lem}\label{lem:decomp} 
Any subrepresentation of $M_{\Gamma}$ 
decomposes as a direct sum of subrepresentations of dimension $\leq \mathbf 1$. 
\end{lem}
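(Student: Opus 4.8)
The plan is to prove the decomposition by induction on the number of vertices of the tree $T$, peeling off leaves one at a time. Let $N=(U_v)_{v\in V(T)}\subset M_\Gamma$ be a subrepresentation. Pick a leaf $w$ of $T$, let $w'$ be its unique neighbor, and let $\vec e$ be the edge oriented from $w$ to $w'$ (so $\cev e$ points from $w'$ to $w$). By Lemma \ref{lem:locally linearly independent underlying graph}(4), $\Gamma\backslash[L_w]$ is again a locally linearly independent convex configuration, and its associated tree is $T$ with $w$ removed; write $\Gamma'$ for it and $M_{\Gamma'}$ for the corresponding representation. The restriction $N':=(U_v)_{v\neq w}$ is a subrepresentation of $M_{\Gamma'}$ (the relations only involve the maps $f_{\vec\iota}$ for $\vec\iota$ not touching $w$, which are unchanged). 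By the inductive hypothesis, $N'=\bigoplus_k N'_k$ with each $N'_k$ of dimension $\leq\mathbf 1$.

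The key local step is to understand $N$ in terms of $N'$ and the single vertex $w$. The only constraints linking $U_w$ to the rest are $f_{\vec e}(U_w)\subset U_{w'}$ and $f_{\cev e}(U_{w'})\subset U_w$. By Lemma \ref{lem:convex hull of two points}(2) applied to the adjacent pair $[L_w],[L_{w'}]$ (they are adjacent since $w,w'$ are joined by an edge of $T$, using Lemma \ref{lem:locally linearly independent underlying graph}(1)), we have $\ker f_{\vec e}=\operatorname{Im}f_{\cev e}$ and $\ker f_{\cev e}=\operatorname{Im}f_{\vec e}$; in particular $f_{\vec e}$ and $f_{\cev e}$ restrict to mutually inverse isomorphisms between $\operatorname{Im}f_{\cev e}=\ker f_{\vec e}\subset\ov L_w$ and $\operatorname{Im}f_{\vec e}=\ker f_{\cev e}\subset\ov L_{w'}$, and $\ov L_w=\ker f_{\vec e}\oplus C_w$, $\ov L_{w'}=\ker f_{\cev e}\oplus C_{w'}$ for complements on which the maps are isomorphisms onto the respective kernels. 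Using this, one decomposes $U_w$ as follows: let $U_w^0=U_w\cap\ker f_{\vec e}$, which contributes a direct summand of $N$ supported only at $w$ (dimension $\leq\mathbf 1$ after splitting into lines — but since $\ker f_{\vec e}=\operatorname{Im}f_{\cev e}$ and local linear independence at $w'$... actually $w$ is a leaf so this is automatic); and the image $f_{\vec e}(U_w)\subset U_{w'}$ is a subspace of $U_{w'}=\bigoplus_k U'_{k,w'}$ which is a sum of some of the lines $U'_{k,w'}$, for each of which there is a unique preimage line in $C_w$, and attaching that preimage line to the summand $N'_k$ turns $N'_k$ into a $\leq\mathbf 1$-dimensional summand of $N$. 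The remaining part of $U_w$ — a complement inside $U_w$ of $U_w^0\oplus f_{\cev e}(f_{\vec e}(U_w))$ — splits into lines in $\ker f_{\vec e}$, wait, that should be arranged to land in $\ker f_{\vec e}$ or to map isomorphically; in either case each such line is its own summand. The bookkeeping must be done carefully so that the pieces are genuinely complementary and their direct sum is all of $N$.

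The main obstacle is precisely this bookkeeping: making sure that the ``extra'' directions in $U_w$ (beyond what is forced by $U_{w'}$) can be chosen to sit inside either $\ker f_{\vec e}$ or a complement mapping isomorphically to its image, so that each becomes a dimension-$\leq\mathbf 1$ subrepresentation, and that one does not double-count the lines $U'_{k,w'}\subset U_{w'}$ that already ``see'' a preimage. I expect the cleanest way is: first replace each $N'_k$ with $U'_{k,w'}\neq 0$ by the summand $N'_k\oplus(\text{unique line in }C_w\text{ over }U'_{k,w'})$; then observe that what is unaccounted for in $U_w$ is a subspace $W\subset U_w$ with $f_{\vec e}(W)=0$, i.e. $W\subset\ker f_{\vec e}$, and also $W$ lies in a complement of $f_{\cev e}(U_{w'})$, so $W$ is a subspace of $\ov L_w$ on which all arrows out of $w$ vanish (the only arrow out of $w$ is $\vec e$), hence any splitting of $W$ into lines gives $\leq\mathbf 1$-dimensional summands supported at $w$ alone; and no other $U_v$ receives anything from these. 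Collecting the summands of $N'$ that were not modified, the modified ones, and the line summands from $W$, one gets $N=\bigoplus(\cdots)$ with every factor of dimension $\leq\mathbf 1$. The base case $|V(T)|=1$ is trivial since then $M_\Gamma$ is a single vector space with no arrows and every subspace splits into lines.
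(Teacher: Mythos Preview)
Your overall plan (induct on $|V(T)|$, peel off a leaf $w$, decompose the restriction $N'$ by induction, then re-attach the vertex $w$) is natural, but the re-attachment step has a genuine gap.

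The specific problem is the assertion that $f_{\vec e}(U_w)\subset U_{w'}=\bigoplus_k U'_{k,w'}$ ``is a sum of some of the lines $U'_{k,w'}$''. There is no reason the image should respect the particular decomposition of $U_{w'}$ handed to you by the inductive hypothesis. Relatedly, your extension ``attach to $N'_k$ the unique line in $C_w$ over $U'_{k,w'}$'' ignores the back map $f_{\cev e}$: if $U'_{k,w'}\not\subset\ker f_{\cev e}$ then $f_{\cev e}(U'_{k,w'})\neq 0$ must lie in the $w$-component of the extended summand, and this image sits in $\ker f_{\vec e}$, not in your complement $C_w$. Moreover the preimage line in $C_w$ need not lie in $U_w$ at all. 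A two-vertex example already breaks the procedure: with $\ov L_w=\ov L_{w'}=\kappa^2$, $f_{\vec e}=\mathrm{diag}(1,0)$, $f_{\cev e}=\mathrm{diag}(0,1)$, take $U_w=\langle e_2\rangle$ and $U_{w'}=\langle e_1+e_2\rangle$. Then $N'=U_{w'}$ is a single line, but it is not contained in $\operatorname{Im}f_{\vec e}=\langle e_1\rangle$, so there is no ``preimage line in $C_w$''; the correct $w$-component is $f_{\cev e}(U_{w'})=\langle e_2\rangle$, and $N\cong P_{w'}$.

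The paper avoids this by running a \emph{double} induction on $|\Gamma|$ and on $|\underline x|=\sum_v\dim U_v$. In three of the four cases it does not decompose $N'$ at all: it locates a single vector (at the leaf $u$ or at its neighbor $u'$) generating a dimension-$\leq\mathbf 1$ summand of $N$, explicitly builds a complementary subrepresentation using local linear independence, and reduces $|\underline x|$. Only in the remaining case, where $f_{u,u'}(U_u)=0$ and $f_{u',u}\colon U_{u'}\to U_u$ is an isomorphism, does it invoke the decomposition of $N'$ and transport it to $u$ via $f_{u',u}$; in that case the extension is forced and automatically compatible. Your argument could be repaired by proving a stronger inductive statement (that one may choose the decomposition of $N'$ compatibly with any prescribed subspace of $U_{w'}$ lying in $\ker f_{\cev e}$ and with $f_{\cev e}$), but as written the compatibility is simply assumed.
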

\begin{proof} Let $M=(U_v)_{v\in V(T)}$ be a subrepresentation of $M_\Gamma$ with dimension ${\underline x}=(x_v)_{v\in V(T)}$. 
We prove the theorem by induction on $|\Gamma|$ and $|{\underline x}|=\sum_vx_v$, which we will refer to as the \textit{value} of ${\underline x}$ in this proof. The base case $|{\underline x}|=0$ or $|\Gamma|=1$ is trivial, hence we may assume $|{\underline x}|\geq 1$ and $|\Gamma|\geq 2$. Let $u$ be a leaf of $T$ adjacent to $u'$. Note that $\Gamma\backslash [L_u]$ is still convex and locally linearly independent by Lemma \ref{lem:locally linearly independent underlying graph} (4). If $U_u=0$, then any decomposition of $M$ as a representation of $Q(\Gamma\backslash [L_u])$ extends to a decomposition of $M$ as a representation of $Q(\Gamma)$. Hence by induction on $|\Gamma|$, we are done. We next assume $U_u\neq 0$.

1) If $f_{u,u'}(U_u)\neq 0$, take $a_u\in U_u$ such that $f_{u,u'}(a_u)\neq 0$ and denote $a_v=f_{u,v}(a_u)$ for $v\neq u$. Note that $a_v\neq 0$ by Lemma \ref{lem:locally linearly independent underlying graph} (3) and local linear independence. Take $V_v\subset U_v$ inductively with respect to the distance between $v$ and $u$ for all $v\in V(T)$ such that 

(i) $U_v=V_v\oplus \langle a_v\rangle$; 

(ii) if $\vec \iota\in\cev E_v$ is the (unique) edge such that $u\in A_{\vec \iota}$, then $V_v$ contains $f_{\vec \iota}(V_{s(\vec \iota)})$; and

(iii) for all $\vec e\in \vec E_v\backslash \cev\iota$, $V_{v}$ contains $\ker f_{\vec e}\cap U_v$. 

\tikzset{every picture/.style={line width=0.75pt}}
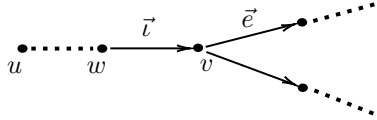
\begin{figure}[ht]	
\begin{tikzpicture}[x=0.5pt,y=0.4pt,yscale=-0.9,xscale=0.9]

\draw  [color={rgb, 255:red, 0; green, 0; blue, 0 }  ,draw opacity=1 ][fill={rgb, 255:red, 0; green, 0; blue, 0 }  ,fill opacity=1 ] (378.89,186.81) .. controls (378.89,184.94) and (377.37,183.42) .. (375.5,183.42) .. controls (373.63,183.42) and (372.11,184.94) .. (372.11,186.81) .. controls (372.11,188.68) and (373.63,190.2) .. (375.5,190.2) .. controls (377.37,190.2) and (378.89,188.68) .. (378.89,186.81) -- cycle ;
\draw  [color={rgb, 255:red, 0; green, 0; blue, 0 }  ,draw opacity=1 ][fill={rgb, 255:red, 0; green, 0; blue, 0 }  ,fill opacity=1 ] (379.5,255.61) .. controls (379.5,253.74) and (377.98,252.22) .. (376.11,252.22) .. controls (374.24,252.22) and (372.72,253.74) .. (372.72,255.61) .. controls (372.72,257.48) and (374.24,259) .. (376.11,259) .. controls (377.98,259) and (379.5,257.48) .. (379.5,255.61) -- cycle ;
\draw  [color={rgb, 255:red, 0; green, 0; blue, 0 }  ,draw opacity=1 ][fill={rgb, 255:red, 0; green, 0; blue, 0 }  ,fill opacity=1 ] (210.89,214.42) .. controls (210.89,212.55) and (209.37,211.04) .. (207.5,211.04) .. controls (205.63,211.04) and (204.11,212.55) .. (204.11,214.42) .. controls (204.11,216.3) and (205.63,217.81) .. (207.5,217.81) .. controls (209.37,217.81) and (210.89,216.3) .. (210.89,214.42) -- cycle ;
\draw  [color={rgb, 255:red, 0; green, 0; blue, 0 }  ,draw opacity=1 ][fill={rgb, 255:red, 0; green, 0; blue, 0 }  ,fill opacity=1 ] (291.5,213.61) .. controls (291.5,211.74) and (289.98,210.22) .. (288.11,210.22) .. controls (286.24,210.22) and (284.72,211.74) .. (284.72,213.61) .. controls (284.72,215.48) and (286.24,217) .. (288.11,217) .. controls (289.98,217) and (291.5,215.48) .. (291.5,213.61) -- cycle ;
\draw [color={rgb, 255:red, 0; green, 0; blue, 0 }  ,draw opacity=1 ]   (294.5,212.61) -- (367.21,189.42) ;
\draw [shift={(369.11,188.81)}, rotate = 522.31] [color={rgb, 255:red, 0; green, 0; blue, 0 }  ,draw opacity=1 ][line width=0.75]    (10.93,-3.29) .. controls (6.95,-1.4) and (3.31,-0.3) .. (0,0) .. controls (3.31,0.3) and (6.95,1.4) .. (10.93,3.29)   ;
\draw [color={rgb, 255:red, 0; green, 0; blue, 0 }  ,draw opacity=1 ]   (214.3,214.2) -- (279.3,214.2) ;
\draw [shift={(281.3,214.2)}, rotate = 180] [color={rgb, 255:red, 0; green, 0; blue, 0 }  ,draw opacity=1 ][line width=0.75]    (10.93,-3.29) .. controls (6.95,-1.4) and (3.31,-0.3) .. (0,0) .. controls (3.31,0.3) and (6.95,1.4) .. (10.93,3.29)   ;
\draw [color={rgb, 255:red, 0; green, 0; blue, 0 }  ,draw opacity=1 ]   (295.11,218) -- (367.91,251.77) ;
\draw [shift={(369.72,252.61)}, rotate = 204.89] [color={rgb, 255:red, 0; green, 0; blue, 0 }  ,draw opacity=1 ][line width=0.75]    (10.93,-3.29) .. controls (6.95,-1.4) and (3.31,-0.3) .. (0,0) .. controls (3.31,0.3) and (6.95,1.4) .. (10.93,3.29)   ;
\draw  [color={rgb, 255:red, 0; green, 0; blue, 0 }  ,draw opacity=1 ][fill={rgb, 255:red, 0; green, 0; blue, 0 }  ,fill opacity=1 ] (143.89,214.42) .. controls (143.89,212.55) and (142.37,211.04) .. (140.5,211.04) .. controls (138.63,211.04) and (137.11,212.55) .. (137.11,214.42) .. controls (137.11,216.3) and (138.63,217.81) .. (140.5,217.81) .. controls (142.37,217.81) and (143.89,216.3) .. (143.89,214.42) -- cycle ;
\draw [line width=1.5]  [dash pattern={on 1.69pt off 2.76pt}]  (147.89,214.42) -- (199.89,214.42) ;
\draw [line width=1.5]  [dash pattern={on 1.69pt off 2.76pt}]  (384.89,185.42) -- (438.3,167.2) ;
\draw [line width=1.5]  [dash pattern={on 1.69pt off 2.76pt}]  (385.89,258.42) -- (438.3,284.2) ;

\draw (287,223.4) node [anchor=north west][inner sep=0.75pt]    {$v$};
\draw (192,225.4) node [anchor=north west][inner sep=0.75pt]    {$w$};
\draw (125,225.4) node [anchor=north west][inner sep=0.75pt]    {$u$};
\draw (238,178.4) node [anchor=north west][inner sep=0.75pt]    {$\vec{\iota }$};
\draw (322,168.4) node [anchor=north west][inner sep=0.75pt]    {$\vec{e}$};

\end{tikzpicture}		
\caption{Case 1 of the proof of Lemma \ref{lem:decomp}}
\label{fig:proof of decomposition}
\end{figure}

We verify the existence of $V_v$, see Figure \ref{fig:proof of decomposition}. Since $u\in A_{\vec\iota}$, we can apply the inductive hypothesis for $w:=s(\vec\iota)$: $V_w$ contains $\ker f_{\vec \iota}\cap U_w$ and does not contain $a_w$. It follows that 
$a_v=f_{\vec \iota}(a_w)\not\in f_{\vec\iota}(V_w)$. By the local linear independence of $\Gamma$ at $[L_v]$, the spaces $f_{\vec \iota}(U_w)$ and  $\{\ker f_{\vec e}\cap U_v\}_{\vec e\in\vec E_v\backslash \cev \iota}$ are linearly independent; this verifies the existence of $V_v$. Since $V_v$ contains $f_{\vec\iota}(V_w)$ as well as $\ker f_{\vec e}\cap U_v$, which contains $f_{\cev e}(U_{s(\cev e)})$ for $\vec e\in \vec E_v\backslash\cev\iota$, we get a subrepresentation $(V_v)_{v\in V(T)}$ of $M_\Gamma$ of dimension ${\underline x}-\mathbf 1$. Therefore, $M=(\langle a_v\rangle )_v\oplus (V_v)_v$ and we apply the inductive hypothesis on the representation $(V_v)_v$, whose dimension vector has value $|{\underline x}|-|V(T)|$.

2) Suppose $f_{u,u'}(U_u)=0$. If $f_{u',u}(U_{u'})\neq U_u$, take $a_u\in U_u\backslash f_{u',u}(U_{u'})$ and $V_u\subset U_u$ such that $U_u=V_u\oplus \langle a_u \rangle$ and $f_{u',u}(U_{u'})\subset V_u$. Set $V_v=U_v$ and $a_v=0$ for $v\neq u$, then $M=(\langle a_v\rangle )_v\oplus (V_v)_v$ and we use the inductive hypothesis on $(V_v)_v$, whose dimension vector has value $|{\underline x}|-1$.

3) Suppose $f_{u,u'}(U_u)=0$ and $f_{u',u}(U_{u'})= U_u$. If $x_u<x_{u'}$, then we can take $0\neq a_{u'}\in U_{u'}$ such that $f_{u',u}(a_{u'})=0$, hence $f_{u',v}(a_{u'})\neq 0$ for all $v\neq u$ by the local linear independence of $\Gamma$ at $[L_{u'}]$. Take $V_{u'}\subset U_{u'}$ such that $U_{u'}=\langle a_{u'}\rangle \oplus V_{u'}$ and $V_{u'}$ contains $\ker f_{\vec e}\cap U_{u'}$ for all $\vec e\in \vec E_{u'}$ such that $t(\vec e)\neq u$.  Argue as in 1) for all branches of $T$ at $u'$ but the one containing $u$, we get subspaces $V_v\subset U_v$ for $v\neq u$ such that $(U_v)_{v\neq u}=(\langle a_v\rangle )_{v\neq u}\oplus (V_v)_{v\neq u}$ as representations of $Q(\Gamma\backslash [L_u])$. Set $a_u=0$ and $V_u=U_u$, then $M=(\langle a_v\rangle )_v\oplus (V_v)_v$ and we apply the inductive hypothesis on $(V_v)_v$, whose dimension vector has value $|{\underline x}|-|V(T)|+1$.

4) Suppose $f_{u,u'}(U_u)=0$, $f_{u',u}(U_{u'})= U_u$ and $x_u=x_{u'}$. Then $f_{u',u}$ is an isomorphism. By induction on $|\Gamma|$, we have $(U_v)_{v\neq u}=\bigoplus_k(U_{v,k})_{v\neq u}$ as representations of $Q(\Gamma\backslash [L_u])$, where each representation  $(U_{v,k})_{v\neq u}$ has dimension $\leq \mathbf 1$. Set $U_{u,k}=f_{u',u}(U_{u',k})$; then $M=(U_v)_{v\in V(T)}=\bigoplus_k(U_{v,k})_v$ where each direct summand $(U_{v,k})_v$ has dimension $\leq \mathbf 1$.
\end{proof}

Knowing that each subrepresentation of $M_\Gamma$ has a decomposition as in Lemma \ref{lem:decomp}, it is now very easy to classify all the indecomposable subrepresentations of $M_\Gamma$, and also calculate the decompositions of the decomposable ones.

\begin{prop}\label{prop:indecomp of loc linear indep}
Suppose $\Gamma$ is a locally linearly independent configuration.
\begin{enumerate}
\item All indecomposable subrepresentations of $M_\Gamma$ are of the form $P_v$ or $R_{\vec e}$.

\item If $M=(U_v)_v$ is a subrepresentation of $M_\Gamma$, then we have the decomposition
$$M\cong \big( \bigoplus_{v\in V(T)} P^{r_v}_v\big)\op \big( \bigoplus_{\vec e\in \vec E(T)} R_{\vec e}^{r_{\vec e}}\big),$$ where
\[r_v=\dim U_v-\sum_{\vec e\in \vec E_v}\dim (\ker f_{\vec e}\cap U_v) \mathrm{\ and\ }r_{\vec e}=\dim (\ker f_{\vec e}\cap U_{s(\vec e)})-\dim f_{\cev e}(U_{t(\vec e)}).\]
In particular, $M$ is projective if and only if $r_{\vec e}=0$ for all $\vec e$, and 
$$M_\Gamma\cong \bigoplus_{v\in V(T)} P^{d_v}_v, \mathrm{\ where\ }d_v=d-\sum_{\vec e\in \vec E_v}\dim \ker f_{\vec e}\mathrm{\ and\ } \sum_{v\in V(T)}d_v=d.$$

\item If $M$ has dimension $\mathbf r$ then $r_{\vec e}=r_{\cev e}$ for all $\vec e$, and $M$ can be decomposed as a direct sum of subrepresentations of dimension $\mathbf 1$.

\item $\Gamma$ is contained in an apartment.
\end{enumerate}
\end{prop}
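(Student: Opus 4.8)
The plan is to establish the four parts in order, the substance being in (1); parts (2) and (3) are bookkeeping on top of (1), and (4) is a one-line consequence of (2). For (1), I would start from Lemma~\ref{lem:decomp}: every subrepresentation of $M_\Gamma$ is a direct sum of subrepresentations of dimension $\leq\mathbf1$, so together with the Krull--Schmidt property of the finite-dimensional algebra $A_\Gamma$ the task reduces to identifying the indecomposable subrepresentations $U=(U_v)_v$ of dimension $\leq\mathbf1$. For such a $U$ with support $S=\{v:U_v\neq0\}$, I would first note that $S$ is connected in $T$: otherwise the unique edge of $T$ separating two components of $S$ has both structure maps zero on $U$, and splitting $U$ along it contradicts indecomposability. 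Next, for every edge $e$ internal to $S$, Lemma~\ref{lem:convex hull of two points}(2) applied to the length-one convex hull gives $f_{\vec e}\circ f_{\cev e}=0=f_{\cev e}\circ f_{\vec e}$, which forces at least one of $f_{\vec e}|_U,f_{\cev e}|_U$ to vanish, while indecomposability forbids both; this orients every internal edge of $S$. Local linear independence at a vertex $v$ — which makes the subspaces $\ker f_{v,x}=\mathrm{Im}f_{x,v}$, $x$ adjacent to $v$, linearly independent in $\overline L_v$ — then forbids any vertex of $S$ from having two incoming edges, since its line $U_v$ would lie in $\mathrm{Im}f_{v_1,v}\cap\mathrm{Im}f_{v_2,v}=\ker f_{v,v_1}\cap\ker f_{v,v_2}=0$. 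Counting (a subtree on $|S|$ vertices has $|S|-1$ edges), the orientation on $S$ has a unique source $v_0$, all edges point away from it, and $U|_S$ is the projective representation $P_{v_0}$ of the sub-double-tree on $S$.

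The heart of the argument — and the step I expect to be the main obstacle — is to show that $S$ is either all of $V(T)$ (whence $U\cong P_{v_0}$) or of the form $A_{\vec\iota}$ for a single directed edge $\vec\iota$ with $s(\vec\iota)=v_0$ (whence $U\cong R_{\vec\iota}$). I would prove that $v_0$ is the only vertex of $S$ that can have a neighbour outside $S$. If $v\in S\setminus\{v_0\}$ and $v'\in S$ is its parent relative to $v_0$ (the source of its unique incoming edge), then $U_v=f_{v',v}(U_{v'})\subseteq\mathrm{Im}f_{v',v}=\ker f_{v,v'}$ by Lemma~\ref{lem:convex hull of two points}(2); a neighbour $w\notin S$ would force $U_v\subseteq\ker f_{v,w}$, and local linear independence at $[L_v]$ gives $\ker f_{v,v'}\cap\ker f_{v,w}=0$, hence $U_v=0$, a contradiction. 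The same intersection argument at $v_0$ (which has no incoming edge) shows $v_0$ has at most one neighbour outside $S$. Consequently, if $S\neq V(T)$ there is exactly one edge $\iota=v_0w$ leaving $S$; removing it disconnects $T$ into two pieces, and $S$ is forced to be the piece containing $v_0$, i.e.\ $S=A_{\vec\iota}$ with $\vec\iota$ oriented from $v_0$ to $w$; the orientation found on $S$ is then exactly ``away from $s(\vec\iota)$'', so $U\cong R_{\vec\iota}$ (the specific lines at the vertices being irrelevant up to isomorphism). This completes (1).

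For (2), part (1) plus Krull--Schmidt gives $M\cong\bigoplus_{v}P_v^{r_v}\oplus\bigoplus_{\vec e}R_{\vec e}^{r_{\vec e}}$ with unique multiplicities; the stated formulas for $r_v$ and $r_{\vec e}$ follow from a dimension count — for fixed $v$ and $\vec e\in\vec E_v$, compare the contributions of the summands $P_w$ and $R_{\vec\jmath}$ to $\dim U_v$ and to $\dim(\ker f_{\vec e}\cap U_v)$, using that $\vec e$ is the only directed edge with $s(\vec e)\in A_{\vec\jmath}$ and $t(\vec e)\notin A_{\vec\jmath}$ — or, alternatively, by threading this bookkeeping through the induction proving Lemma~\ref{lem:decomp}. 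Applied to $M=M_\Gamma$: local linear independence and Lemma~\ref{lem:convex hull of two points}(2) give $f_{\cev e}(\overline L_{t(\vec e)})=\mathrm{Im}f_{t(\vec e),s(\vec e)}=\ker f_{\vec e}=\ker f_{\vec e}\cap\overline L_{s(\vec e)}$, so each $r_{\vec e}=0$ and $M_\Gamma\cong\bigoplus_v P_v^{d_v}$ with $d_v=d-\sum_{\vec e\in\vec E_v}\dim\ker f_{\vec e}$; comparing total dimensions (each $P_v$ has total dimension $|V(T)|$) yields $\sum_v d_v=d$. In particular $M_\Gamma$ is a projective $A_\Gamma$-module, so Proposition~\ref{prop:ambient representation 2}(3) immediately gives (4): $\Gamma$ is contained in one apartment of $\mathfrak B_d$.

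Finally, for (3): if $\dim U_v=r$ for all $v$, rank--nullity gives $r=\dim(\ker f_{\vec e}\cap U_{s(\vec e)})+\dim f_{\vec e}(U_{s(\vec e)})$ and, symmetrically, $r=\dim(\ker f_{\cev e}\cap U_{t(\vec e)})+\dim f_{\cev e}(U_{t(\vec e)})$, which together rewrite the defining expression for $r_{\vec e}$ as $\dim(\ker f_{\cev e}\cap U_{t(\vec e)})-\dim f_{\vec e}(U_{s(\vec e)})$, i.e.\ $r_{\vec e}=r_{\cev e}$. Since $A_{\vec e}\sqcup A_{\cev e}=V(T)$, the representation $R_{\vec e}\oplus R_{\cev e}$ has dimension $\mathbf1$; pairing each $R_{\vec e}$-summand with an $R_{\cev e}$-summand then exhibits $M$ as a direct sum of the $P_v$'s and the $R_{\vec e}\oplus R_{\cev e}$'s, i.e.\ of subrepresentations of dimension $\mathbf1$.
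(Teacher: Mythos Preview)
Your proof is correct. Parts (2)--(4) track the paper's argument closely, but your proof of (1) takes a genuinely different route. The paper argues by induction on $|\Gamma|$: it deletes a leaf $u$ of $T$, observes that if $f_{u,u'}(U_u)\neq 0$ then $U\cong P_u$ outright, and otherwise shows that the restriction $U|_{Q(\Gamma\setminus\{[L_u]\})}$ is again indecomposable and invokes the inductive hypothesis. You instead analyze the support $S$ of an indecomposable $U$ of dimension $\leq\mathbf1$ directly: connectedness of $S$, a forced orientation on each internal edge, the in-degree count locating a unique source $v_0$, and then local linear independence pinning the boundary of $S$ to at most one edge at $v_0$, so that $S$ is either $V(T)$ or a single $A_{\vec\iota}$. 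Both arguments are short; yours is more self-contained and makes explicit \emph{why} no other support shapes can occur, while the paper's induction hides this combinatorics inside the leaf-deletion step. One cosmetic point: your phrase ``the unique edge of $T$ separating two components of $S$'' should read ``an edge'' --- there may be several --- but the splitting works with any such edge, so nothing is affected.
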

\medskip

\begin{proof}
(1) It is easy to verify that both $P_v$ and $R_{\vec e}$ are indecomposable. Now let $R=(U_v)_v$ be an indecomposable subrepresentation of $M_\Gamma$ of dimension $\leq \mathbf 1$. We prove by induction on $|\Gamma|$ that $R$ must either be of the form $P_v$ or $R_{\vec e}$. Let $u$ be a leaf of  $T$ and $\Gamma'=\Gamma\backslash \{[L_u]\}$, and $u'$ the unique vertex adjacent to $u$. If $f_{u,u'}(U_u)\neq 0$ then $R=P_u$ and we are done. We next assume $f_{u,u'}(U_u)=0$. Since $R$ is indecomposable, $f_{u',u}(U_{u'})=U_u$.

By Lemma \ref{lem:locally linearly independent underlying graph} (4) we know that $\Gamma'$ is still locally linearly independent.  If the restriction $R'$ of $R$ on $Q(\Gamma')$ can be decomposed as $R_1\oplus R_2$, where $R_1=(U^1_v)_{v\neq u}$ and $R_2=(U^2_v)_{v\neq u}$, such that $\dim U^1_{u'}\geq \dim U^2_{u'}$, then $(U_u,R_1)\oplus (0,R_2)$ gives a decomposition of $R$. Hence we may assume that $R'$ is also indecomposable. By induction, $R'$ is either $P'_{v}$ or $R'_{\vec e}$ as subrepresentations of $M_{\Gamma'}$. Hence so is $R$ as a subrepresentation of $M_\Gamma$.

(2) By part (1), $M$ can be represented as a direct sum of $P_v$'s and $R_{\vec e}$'s. Let us denote by $W_v$ (resp. $W_{\vec e}$) the $v$-th (resp. $s(\vec e)$-th) component of $P_v^{r_v}$ (resp. $R_{\vec e}^{r_{\vec e}}$). It is then easy to verify that $$\ker f_{\vec e}\cap U_v=W_{\vec e}\oplus f_{\cev e}(U_{t(\vec e)})$$ for all $\vec e\in \vec E_v$,
which gives $r_{\vec e}$. It also follows that 
$$U_v=W_v\oplus \big( \bigoplus_{\vec e \in \vec E_v} (\ker f_{\vec e}\cap U_v)\big),$$ which gives $r_v$. The fact that $r_{\vec e}=0$ for all $\vec e$ is equivalent to the projectivity of $M$ follows from Lemma~\ref{lem:indecomp_proj}; and the
decomposition of $M_\Gamma$ follows from Lemma \ref{lem:convex hull of two points} (2).

(3) We have $r_{\vec e}=r-\dim f_{\vec e}(U_{s(\vec e )})-\dim f_{\cev e}(U_{s(\cev e)})=r_{\cev e}$. By (2), we have $$M\cong\Big(\bigoplus_{v\in V(T)}P^{r_v}_v\Big)\op\Big(\bigoplus_{e\in E(T)}(R_{\vec e}\oplus R_{\cev e})^{r_{\vec e}}\Big),$$ which is a direct sum of subrepresentations of dimension $\mathbf 1$.

(4) Follows from Proposition \ref{prop:ambient representation 2} (3) and part (2).
\end{proof}

We end this subsection with a lemma that will be used to analyse the smoothing property of limit linear series (see Theorem \ref{thm:smoothing of lls}).

\begin{lem}\label{lem:lifting limit linear series}
Let $\Gamma$ be locally linearly independent. Given a non-empty subset $I\subset V(T)$ and $r$-dimensional vector spaces $V_v\subset \ov L_v$ for $v\in I$. Suppose for all $u\in \Gamma$, the vector space 
$$W_u:=\{x\in \ov L_u|f_{u,v}(x)\in V_v\mathrm{\ for\ all\ }v\in I\}$$ has dimension at least $r$. Then there is an $\mathbf r$-dimensional subrepresentation $M=(U_v)_v$ of $M_\Gamma$ such that $U_v=V_v$ for all $v\in I$.
\end{lem}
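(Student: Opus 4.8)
The plan is to prove the statement by induction on $|\Gamma| = |V(T)|$, peeling off a leaf of the tree $T$ at each stage, exactly as in the proof of Lemma \ref{lem:decomp}. The base case $|V(T)| = 1$ is trivial: then $I = \{v_0\}$, $W_{v_0} = \ov L_{v_0}$, and we just take $U_{v_0} = V_{v_0}$. For the inductive step, let $u$ be a leaf of $T$ adjacent to $u'$, and set $\Gamma' = \Gamma \backslash [L_u]$, which is again convex and locally linearly independent by Lemma \ref{lem:locally linearly independent underlying graph}(4), with associated tree $T' = T \backslash \{u\}$. I would like to reduce to $\Gamma'$ and then extend the resulting subrepresentation across the edge $\{u, u'\}$, so there are two cases depending on whether $u \in I$.

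\textbf{Case 1: $u \notin I$.} Here I apply the inductive hypothesis to $\Gamma'$ with the \emph{same} data $\{V_v\}_{v \in I}$, provided $I \cap V(T') = I \neq \varnothing$, which holds since $u \notin I$. I must check the hypothesis transfers: for $w \in V(T')$, the space $W'_w = \{x \in \ov L_w \mid f_{w,v}(x) \in V_v \ \forall v \in I\}$ computed inside $\Gamma'$ equals $W_w$, since $I \subset V(T')$ and the maps $f_{w,v}$ for $v \in I$ only involve minimal paths in $T$ that, because $u$ is a leaf not in $I$, stay inside $T'$ (Lemma \ref{lem:locally linearly independent underlying graph}(3)). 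So $\dim W'_w = \dim W_w \ge r$ and induction gives an $\mathbf{r}$-dimensional subrepresentation $(U_v)_{v \in V(T')}$ of $M_{\Gamma'}$ with $U_v = V_v$ for $v \in I$. Now I extend to $u$: I need $U_u \subset \ov L_u$ of dimension $r$ with $f_{u,u'}(U_u) \subset U_{u'}$ and $f_{u',u}(U_{u'}) \subset U_u$. Using Lemma \ref{lem:convex hull of two points}(2) ($\ker f_{u,u'} = \operatorname{Im} f_{u',u}$ and vice versa, since $u, u'$ are adjacent), one checks that $f_{u',u}(U_{u'})$ has dimension $r - \dim(\ker f_{u',u} \cap U_{u'})$, and the subspace $f_{u',u}^{-1}(\,\text{stuff}\,)$ argument shows there is room: set $U_u$ to be any $r$-dimensional space containing $f_{u',u}(U_{u'})$ and contained in $f_{u,u'}^{-1}(U_{u'})$; such a space exists because $\dim f_{u,u'}^{-1}(U_{u'}) = \dim U_{u'} + \dim \ker f_{u,u'} = r + \dim \operatorname{Im} f_{u',u} \ge r + \dim f_{u',u}(U_{u'})$, and it automatically contains $\ker f_{u,u'} = \operatorname{Im} f_{u',u} \supset f_{u',u}(U_{u'})$, so both compatibility conditions hold.

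\textbf{Case 2: $u \in I$.} Now I cannot simply drop $u$; instead I first use $V_u$ to constrain $V_{u'}$. If $u' \in I$ already, good; otherwise I want to \emph{augment} $I$ to $I \cup \{u'\}$ by choosing an appropriate $r$-dimensional $V_{u'} \subset W_{u'}$ (which has dimension $\ge r$ by hypothesis), chosen so that $V_{u'} \supset f_{u,u'}(V_u)$ and $V_{u'} \cap \ker f_{u',u} \supset$ (whatever is forced). Actually the cleanest approach: since $u \in I$, replace $I$ by $I' = I \backslash \{u\}$ if $I' \neq \varnothing$ and record the constraint $f_{u,u'}(V_u) \subset V_{u'}$, $V_{u'} \subset f_{u,u'}^{-1}(V_u) \cap \ldots$; this requires care because the constraint from $u$ must be propagated. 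The key computation, which is the main obstacle, is to verify that when we restrict to $\Gamma'$ with the enlarged data, the hypothesis ``$\dim W'_w \ge r$ for all $w$'' still holds — the new constraint from insisting $f_{u',\bullet}$-images lie in a fixed $V_{u'}$ could in principle shrink some $W'_w$ below $r$. Here I expect to need local linear independence at $[L_{u'}]$ crucially: the subspaces $\ker f_{u',u}$ (the ``branch toward $u$'') and the others are independent, so pulling back along $f_{u',w}$ the condition at $u$ decouples from the conditions at the other $v \in I'$, and one can choose $V_{u'}$ compatibly. If $I = \{u\}$ (so $I' = \varnothing$), I instead build the representation directly: pick $V_{u'} \subset W_{u'}$ of dimension $r$ with $f_{u,u'}(V_u) \subset V_{u'}$, then extend outward from $u'$ by the explicit recipe in case 1) of the proof of Lemma \ref{lem:decomp} (choosing, at each vertex $v$, an $r$-dimensional $U_v$ containing $f_{\vec\iota}(U_{s(\vec\iota)})$ and $\ker f_{\vec e} \cap \ov L_v$ for the outgoing edges, using local linear independence to guarantee the needed dimension bound), and finally set $U_u = V_u$, checking $f_{u,u'}(U_u) \subset U_{u'}$ and $f_{u',u}(U_{u'}) \subset U_u$ — the latter because $f_{u',u}(U_{u'}) \subset \ker f_{u,u'} \cap$ (image), which lands in $U_u = V_u = W_u$ since $W_u \supset \ker f_{u,u'}$ (as every $x \in \ker f_{u,u'}$ trivially satisfies $f_{u,u}(x) = x \in W_u$... this needs rechecking) — more carefully, $f_{u',u}(U_{u'})$ has dimension $\le r$, so it embeds in $V_u$ iff it is contained in $V_u$, which one arranges by the compatibility between the choice of $V_u$ and the construction.

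\textbf{Main obstacle.} The delicate point throughout is the bookkeeping in Case 2: showing that after imposing the ``boundary condition at $u$'' by choosing $V_{u'}$, the reduced problem on $\Gamma'$ still satisfies the hypothesis $\dim W'_w \ge r$ for every $w$. The resolution is that local linear independence at $[L_{u'}]$ makes the constraint contributed by $u$ \emph{independent} (as a subspace of $\ov L_{u'}$, after pulling back) from the constraints contributed by the other indices in $I$, so the intersection defining $W'_w$ does not drop in dimension below $r$. This is the same mechanism (linear independence of the kernels $\ker f_{u',\vec e}$) that drives the existence of the $V_v$'s in case 1) of the proof of Lemma \ref{lem:decomp}, and I would model the argument closely on that proof, inducting simultaneously on $|\Gamma|$.
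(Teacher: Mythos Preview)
Your approach differs from the paper's, and Case 2 has a genuine gap.

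The paper's proof rests on one observation you missed: $W = (W_u)_{u \in V(T)}$ is \emph{already} a subrepresentation of $M_\Gamma$, with $W_v = V_v$ for all $v \in I$. (For an arrow $\vec e$ and $x \in W_{s(\vec e)}$, the composite $f_{t(\vec e),v} \circ f_{\vec e}$ is either $f_{s(\vec e),v}$ or zero, so $f_{\vec e}(x) \in W_{t(\vec e)}$; and for $v \in I$ the condition at $v$ itself forces $W_v \subset V_v$, hence equality by the dimension hypothesis.) The paper then inducts on $|\underline x| = \sum_u \dim W_u$: pick adjacent $u_1, u_2$ with $\dim W_{u_1} = r < \dim W_{u_2}$, let $\vec\iota$ be the edge from $u_1$ to $u_2$, and use local linear independence at $u_2$ to map $\bigoplus_{\vec e \in \cev E_{u_2} \setminus \vec\iota} f_{\vec e}(W_{s(\vec e)})$ injectively via $f_{\cev\iota}$ into $\ker f_{\vec\iota} \cap W_{u_1}$; this shows the span $\widetilde W_{u_2}$ of all incoming images has dimension $\le r$. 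Replace $W_{u_2}$ by any $r$-dimensional subspace containing $\widetilde W_{u_2}$; since $u_2 \notin I$ the boundary data are untouched, and $|\underline x|$ strictly decreases.

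Your Case 2 is not complete, and the obstacle you flag is real. When $u \in I$ and $u' \notin I$, you propose to fix some $r$-dimensional $V_{u'} \subset W_{u'}$ and pass to $\Gamma'$ with index set $(I \setminus \{u\}) \cup \{u'\}$. But for $w \in V(T')$ the new constraint $f_{w,u'}(x) \in V_{u'}$ is strictly stronger than the old $f_{w,u}(x) \in V_u$ (the latter says only $f_{w,u'}(x) \in f_{u',u}^{-1}(V_u) \supsetneq V_{u'}$), so $W''_w \subset W_w$ can be proper and there is no mechanism guaranteeing $\dim W''_w \ge r$ for \emph{all} $w$ simultaneously --- the potential drop occurs at vertices far from $u'$, so local linear independence at $u'$ alone does not control it. The paper sidesteps this entirely because it never alters $I$ or the data $\{V_v\}_{v \in I}$; it only shrinks $W$ at vertices outside $I$. (Separately, in Case 1 your formula $\dim f_{u,u'}^{-1}(U_{u'}) = r + \dim \ker f_{u,u'}$ is incorrect unless $U_{u'} \subset \operatorname{Im} f_{u,u'}$; the correct count $\dim \ker f_{u,u'} + \dim(U_{u'} \cap \operatorname{Im} f_{u,u'})$ is nevertheless $\ge r$ by the dimension bound $\dim(U_{u'} \cap \operatorname{Im} f_{u,u'}) \ge r + \dim\operatorname{Im} f_{u,u'} - d$, so your conclusion there survives.)
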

\begin{proof}
We use a similar proof as in \cite[Proposition A.6]{osserman2019limit}. Note that $W=(W_v)_v$ is a subrepresentation of $M_\Gamma$ of dimension ${\underline x}\geq \mathbf r$ and $W_v=V_v$ for all $v\in I$. We proceed by induction on $|{\underline x}|$. The base case ${\underline x}=\mathbf r$ is trivial, so we assume $|{\underline x}|>|\mathbf r|$. Then there is a pair of adjacent vertices $u_1$ and $u_2$ such that $\dim W_{u_1}=r$ and $\dim W_{u_2}>r$. Denote $\widetilde W_{u_2}=\bigoplus\limits_{\vec e\in \cev E_{u_2}}f_{\vec e}(W_{s(\vec e)})$. This is the vector space generated by the images from all $W_u$ for $u\neq u_2$. We claim that $\dim\widetilde W_{u_2}\leq r$. 

Indeed, denote by $\vec \iota$ the directed edge from $u_1$ to $u_2$. By local linear independence at $[L_{u_2}]$ we have an injection
$$\widetilde W'_{u_2}:=\bigoplus_{\vec e\in \cev E_{u_2}\backslash\vec \iota}f_{\vec e}(W_{s(\vec e)})\xhookrightarrow{f_{\cev\iota}} \ker f_{\vec\iota}\cap W_{u_1}\subset W_{u_1}.$$
It follows that 
$$\dim\widetilde W_{u_2}= \dim\widetilde W'_{u_2}+\dim f_{\vec \iota}(W_{u_1})\leq \dim (\ker f_{\vec\iota}\cap W_{u_1})+ \dim f_{\vec\iota}(W_{u_1}) =r.$$

Now we can replace $W_{u_2}$ with any $r$-dimensional subspace that contains $\widetilde W_{u_2}$ while keeping the other $W_i$'s. This gives a subrepresentation $W'=(W'_i)$ of $M_\Gamma$ with dimension no less than $\mathbf r$ and strictly less than $\mathbf r'$. Since $u_2\not\in I$, we still have $W'_{v}=V_v$ for all $v\in I$. Hence by induction we are done.
\end{proof}

\subsection{The stratification of the quiver Grassmannians of $M_\Gamma$.}\label{subsec:stratification} Given a dimension vector $\underline x=(x_v)_v\in\mathbb Z^{V(T)}_{\geq 0}$. Let $M\in \Gr(\underline x,M_\Gamma)$ be a subrepresentation. We denote by $\mathcal S_M$ the set of all dimension-$\underline x$ subrepresentations of $M_\Gamma$ that are isomorphic to $M$ and $\mathcal S^c_M$ its closure. 
This induces a stratification $(\mathcal S_M)_{[M]}$ of $\Gr(\underline x,M_\Gamma)$, where $M$ runs through all isomorphic classes of dimension-$\underline x$ subrepresentations of $M_\Gamma$. We associate a preorder ``$\prec$" on $\Gr(\underline x,M_\Gamma)$ where $M\prec M'$ if $M\in \mathcal S^c_{M'}$.
On the other hand, we define a map $\Phi_{\underline x}\colon \Gr({\underline x},M_\Gamma)\rightarrow\mathbb Z^{\vec E(T)}_{\geq 0}$ such that 
$$M:=(U_v)_{v\in V(T)}\mapsto (\dim f_{\vec e}(U_{s(\vec e)}))_{\vec e\in \vec E(T)}.$$ 
We also associate a partial order on $Z^{\vec E(T)}_{\geq 0}$ where $(d_{\vec e})_{\vec e}\leq (d'_{\vec e})_{\vec e}$ if $d_{\vec e}\leq d'_{\vec e}$ for all $\vec e\in \vec E(T)$. Then it follows from construction that $\Phi_{\underline x}$ is order-preserving: if $M\prec M'$ then $\Phi_{\underline x}(M)\leq \Phi_{\underline x}(M')$. We will see in Proposition~\ref{prop:linked degeneration stratification} (2) that the converse is also true. Moreover, following directly from Proposition \ref{prop:indecomp of loc linear indep}, we have

\begin{lem}\label{lem:stratification map}
The fiber of $\Phi_{\underline x}$ at $(d_{\vec e})_{\vec e}\in\mathbb Z^{\vec E(T)}_{\geq 0}$ is, if non-empty, $\mathcal S_M$, where
\[M\cong\Big(\bigoplus_{v\in V(T)} P^{r_v}_{v}\Big)\op \Big(\bigoplus_{\vec e\in\vec E(T)} R^{r_{\vec e}}_{\vec e}\Big)\mathrm{\ with\ }r_v=x_v-\sum_{\vec e\in \vec E_v}(x_v-d_{\vec e})\mathrm{\ and\ }r_{\vec e}=x_{s(\vec e)}-d_{\vec e}-d_{\cev e}.
\]
\end{lem}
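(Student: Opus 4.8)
The plan is to compute, for a given subrepresentation $M=(U_v)_v\in\Gr(\underline x,M_\Gamma)$, what data determine its isomorphism class, and to show that this is exactly recorded by $\Phi_{\underline x}(M)$. First I would invoke Proposition~\ref{prop:indecomp of loc linear indep}~(2): any subrepresentation of $M_\Gamma$ decomposes as $M\cong\big(\bigoplus_{v}P_v^{r_v}\big)\op\big(\bigoplus_{\vec e}R_{\vec e}^{r_{\vec e}}\big)$, and since the $P_v$ and $R_{\vec e}$ are pairwise non-isomorphic indecomposables (they have distinct dimension vectors, $P_v$ being supported everywhere with value $1$ and $R_{\vec e}$ supported on $A_{\vec e}$), the Krull--Schmidt theorem guarantees that the multiplicities $(r_v)_v$ and $(r_{\vec e})_{\vec e}$ are intrinsic invariants of the isomorphism class. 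So the fiber of $\Phi_{\underline x}$ over any point of its image, if nonempty, is a single isomorphism stratum $\mathcal S_M$; it remains to express $r_v$ and $r_{\vec e}$ in terms of $(d_{\vec e})_{\vec e}=(\dim f_{\vec e}(U_{s(\vec e)}))_{\vec e}$ and the fixed dimension vector $\underline x$.

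The key computation is the rank-nullity bookkeeping already carried out inside the proof of Proposition~\ref{prop:indecomp of loc linear indep}~(2). There it is shown that $\ker f_{\vec e}\cap U_v = W_{\vec e}\oplus f_{\cev e}(U_{t(\vec e)})$ for $\vec e\in\vec E_v$, where $W_{\vec e}$ is the $s(\vec e)$-component of the summand $R_{\vec e}^{r_{\vec e}}$, so $\dim W_{\vec e}=r_{\vec e}$; and that $U_v = W_v\oplus\bigoplus_{\vec e\in\vec E_v}(\ker f_{\vec e}\cap U_v)$ with $\dim W_v = r_v$. From the first identity, $\dim(\ker f_{\vec e}\cap U_{s(\vec e)}) = r_{\vec e} + \dim f_{\cev e}(U_{t(\vec e)})$. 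Now apply rank-nullity to $f_{\vec e}$ on $U_{s(\vec e)}$: $\dim(\ker f_{\vec e}\cap U_{s(\vec e)}) = x_{s(\vec e)} - d_{\vec e}$, and likewise $\dim f_{\cev e}(U_{t(\vec e)}) = d_{\cev e}$ by definition of $\Phi$. Hence
\[
r_{\vec e} = (x_{s(\vec e)} - d_{\vec e}) - d_{\cev e} = x_{s(\vec e)} - d_{\vec e} - d_{\cev e},
\]
which is the stated formula. For $r_v$, the second identity gives $x_v = r_v + \sum_{\vec e\in\vec E_v}\dim(\ker f_{\vec e}\cap U_v) = r_v + \sum_{\vec e\in\vec E_v}(x_v - d_{\vec e})$ — using once more rank-nullity for each $f_{\vec e}$ with $\vec e\in\vec E_v$ — so $r_v = x_v - \sum_{\vec e\in\vec E_v}(x_v - d_{\vec e})$, again as claimed.

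There is essentially no obstacle here; the content is entirely bundled into Proposition~\ref{prop:indecomp of loc linear indep}. The only point requiring a word of care is that the multiplicities really are determined by the isomorphism class (so that the fiber is a \emph{single} stratum rather than a union of several), which is the Krull--Schmidt uniqueness applied to the finite-dimensional bound quiver algebra $A_\Gamma$ (cited earlier from \cite[Theorem I.4.10]{assem2006elements}), together with the observation that the $P_v$'s and $R_{\vec e}$'s are pairwise non-isomorphic. I would state these two sentences explicitly and then present the rank-nullity identities above; the proof is short.
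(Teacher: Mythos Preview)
Your proposal is correct and is precisely the argument the paper has in mind: the paper's proof consists of the single line ``Follows directly from Proposition~\ref{prop:indecomp of loc linear indep},'' and you have simply unpacked that reference by substituting $\dim U_v=x_v$, $\dim(\ker f_{\vec e}\cap U_{s(\vec e)})=x_{s(\vec e)}-d_{\vec e}$, and $\dim f_{\cev e}(U_{t(\vec e)})=d_{\cev e}$ into the formulas for $r_v$ and $r_{\vec e}$ given there. Your remark about Krull--Schmidt ensuring that the fiber is a single stratum is a welcome clarification, though the paper treats it as implicit.
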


We next show that the stratification $\Gr({\underline x},M_\Gamma)=\cup_{[M]}\mathcal S_M$ is well-behaved, i.e., this is a stratification by locally closed irreducible subsets.  
In principle, at least for characteristic zero, the conclusion should follow from a standard argument of stratifications of quiver Grassmannians (for acyclic quivers in characteristic zero see e.g.  \cite[\S 2]{cerulli2012quiver}). However, to avoid unnecessary reference checking, we include the proof.


\begin{prop}\label{prop:quiver stratification}
$\mathcal{S}_M$ is an irreducible locally closed subset of $\Gr({\underline x},M_\Gamma)$ of dimension $$\dim \h(M,M_\Gamma)-\dim\End(M).$$ 
\end{prop}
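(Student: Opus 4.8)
The plan is to realize $\mathcal S_M$ as the image of a morphism from an irreducible homogeneous-type variety and to compute its dimension via the fibers of that morphism. Concretely, I would fix the decomposition type of $M$ given by Lemma~\ref{lem:stratification map}, namely $M\cong\big(\bigoplus_v P_v^{r_v}\big)\oplus\big(\bigoplus_{\vec e}R_{\vec e}^{r_{\vec e}}\big)$, and consider the variety $H$ of injective homomorphisms of representations $\iota\colon M\hookrightarrow M_\Gamma$ (equivalently, those $\iota\in\h(M,M_\Gamma)$ whose image is a subrepresentation of dimension $\underline x$). The space $\h(M,M_\Gamma)$ is a finite-dimensional $\kappa$-vector space, and the injectivity condition is open in it; since $\mathcal S_M$ is non-empty, $H$ is a non-empty open subset of an affine space, hence irreducible of dimension $\dim\h(M,M_\Gamma)$. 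There is a natural map $H\to\Gr(\underline x,M_\Gamma)$ sending $\iota$ to its image, and by construction its image is exactly $\mathcal S_M$ (any subrepresentation isomorphic to $M$ arises as such an image, and any image is isomorphic to $M$). Thus $\mathcal S_M$ is irreducible, being the image of an irreducible variety.

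For local closedness, I would argue that $\mathcal S_M$ coincides with a fiber of the map $\Phi_{\underline x}$ of the previous subsection: by Lemma~\ref{lem:stratification map} the fiber of $\Phi_{\underline x}$ over the tuple $(d_{\vec e})_{\vec e}$ corresponding to $M$ is precisely $\mathcal S_M$. Now $\Phi_{\underline x}$ has the feature that each coordinate $M'=(U_v)\mapsto \dim f_{\vec e}(U_{s(\vec e)})$ is upper semicontinuous in the usual sense (rank of a map of vector bundles jumps up on closed sets / drops on open sets) when we view the universal subbundle over $\Gr(\underline x,M_\Gamma)$; hence the locus where $\dim f_{\vec e}(U_{s(\vec e)})\ge d_{\vec e}$ is open and the locus where it is $\le d_{\vec e}$ is closed, so each fiber of $\Phi_{\underline x}$ is locally closed. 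Combined with the previous paragraph this gives that $\mathcal S_M$ is an irreducible locally closed subset.

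For the dimension formula, I would compute the fibers of the morphism $p\colon H\to\mathcal S_M$. The group $\aut(M)$ acts on $H$ by precomposition, freely (since the $\iota$'s are injective), and two injections have the same image if and only if they differ by an element of $\aut(M)$; hence the fibers of $p$ are exactly the $\aut(M)$-orbits, each of dimension $\dim\aut(M)=\dim\End(M)$ because $\aut(M)$ is a dense open subset of the affine space $\End(M)$ (an endomorphism of a finite-dimensional module is an automorphism iff it is invertible, an open condition, and this open set is non-empty). Since $p$ is a surjective morphism of varieties with all fibers of the same dimension $\dim\End(M)$ onto the irreducible $\mathcal S_M$, we get
\[
\dim\mathcal S_M=\dim H-\dim\End(M)=\dim\h(M,M_\Gamma)-\dim\End(M),
\]
as claimed.

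The main obstacle I anticipate is making the fiber-dimension argument rigorous in positive characteristic: one must be careful that $p$ is (at least generically, or better everywhere) flat or at least that the fiber-dimension theorem applies, and that the $\aut(M)$-orbits are closed in $H$ of the expected dimension rather than merely of dimension $\le\dim\End(M)$. Here the freeness of the $\aut(M)$-action on injective homomorphisms is the key point that forces every orbit to have dimension exactly $\dim\End(M)$; then the theorem on dimension of fibers of a dominant morphism between irreducible varieties, applied to $p$ restricted to a suitable open set and then spread out using that all fibers are orbits of the same dimension, yields the formula. A secondary technical point is verifying that $H$ is non-empty precisely when $\mathcal S_M$ is non-empty and that $H$ is genuinely open (not just constructible) in $\h(M,M_\Gamma)$, which follows since injectivity of $\iota$ is equivalent to $\iota$ being injective on each vertex space $M_v$, an open condition.
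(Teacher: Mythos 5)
Your proof is correct, and it reaches the dimension formula by essentially the same mechanism as the paper, but your irreducibility argument takes a genuinely different (and simpler) route. The paper first proves irreducibility of the strata of indecomposable type: for $\mathcal S_{P_u}$ it connects any two points by the pencil of generators $t\mapsto tx+(1-t)x'$, argues similarly for $\mathcal S_{R_{\vec e}}$, and then realizes $\mathcal S_M$ as the image of a dominant rational map from $\prod_v\mathcal S_{P_v}^{r_v}\times\prod_{\vec e}\mathcal S_{R_{\vec e}}^{r_{\vec e}}$ given by direct sum; this leans on the classification of subrepresentations in Proposition \ref{prop:indecomp of loc linear indep}. You instead take the image of the irreducible open set $H\subset\h(M,M_\Gamma)$ of injections under $\iota\mapsto\im(\iota)$, which is shorter and does not use local linear independence or the decomposition of $M$ at all, so it would apply to an arbitrary ambient representation. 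For the dimension count the two arguments coincide in substance: the paper follows the incidence variety $X$ of \cite{cerulli2012quiver} with its free $\GL_{{\underline x}}$-action and projections $p_1,p_2$, and the fiber of $p_1$ over a point of the orbit $\mathcal O_M$ is exactly your $H$; quotienting $H$ directly by the free $\aut(M)$-action by precomposition just removes the detour through the representation variety $Y$. Your characteristic-independent justification is sound: each fiber of $H\to\mathcal S_M$ is the intersection of $H$ with the affine subspace $\iota\circ\End(M)$, which has dimension exactly $\dim\End(M)$ because $\phi\mapsto\iota\circ\phi$ is an injective linear map, so the fiber-dimension theorem for a dominant morphism of irreducible varieties gives the formula. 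The only slip is terminological: the rank of $f_{\vec e}$ on the universal subbundle is \emph{lower} (not upper) semicontinuous, i.e.\ the locus where it is $\geq d_{\vec e}$ is open and where it is $\leq d_{\vec e}$ is closed --- which is exactly what you then use, so local closedness of the fibers of $\Phi_{\underline x}$ goes through as in the paper.
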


\begin{proof}
Since the fibers of $\Phi_{\underline x}$ are locally closed, so is $\mathcal S_M$. For the irreducibility, note that, if $M\cong M'\cong P_u$, then $M$ (resp. $M'$) is generated by a vector $x\in \overline L_u$ (resp. $x'\in \overline L_u$) such that $0\neq f_{u,v}(x)\in \ov L_v$ (resp. $0\neq f_{u,v}(x')\in \ov L_v$) for all $v\in V(T)$. Hence for a general choice of $t\in\kappa$, the vector $tx+(1-t)x'\in \overline L_u$ has non-zero image in $\overline L_v$ for all $v$. This gives rise to a subrepresentation $M_t$ of $M_\Gamma$ isomorphic to $P_u$, and hence a rational map  $\mathbb A^1_\kappa\dashrightarrow \mathcal S_{P_u}$ where $t\mapsto M_t$. Here $\mathcal S_{P_u}$ is considered as a stratum of $\Gr(\mathbf 1,M_\Gamma)$. 
The image of this map contains $M$ and $M'$, hence they are contained in the same irreducible component of $\mathcal S_{P_u}$. Therefore, $\mathcal S_{P_u}$ is irreducible. Similarly, $\mathcal S_{R_{\vec e}}$ is irreducible as a stratum of $\Gr(\dim R_{\vec e},M_\Gamma)$. In general, for $M\cong \bigoplus_vP_v^{r_v}\op\bigoplus_{\vec e}R^{r_{\vec e}}_{\vec e}$ we have a rational dominant map
$$\prod_{v\in V(T)}\mathcal S_{P_v}^{r_v}\times\prod_{\vec e\in\vec E(T)}\mathcal S_{R_{\vec e}}^{r_{\vec e}}\dashrightarrow \mathcal S_M$$
given by taking the direct sum. Since the source is irreducible, so is $\mathcal S_M$.

We now compute $\dim \mathcal S_M$ following the proof of \cite[Lemma 2.4]{cerulli2012quiver}. Let $X$ denote the quasi-affine subvariety of $$\Big(\prod_{\vec e\in \vec E(T)}\h(\kappa^{x_{s(\vec e)}},\kappa^{x_{t(\vec e)}})\Big)\times\Big(\prod_{v\in V(T)}\h(\kappa^{x_v},\ov L_v)\Big) $$
consisting of points $((g_{\vec e})_{\vec e},(F_v)_v)\in Y$ such that $F_{t(\vec e)}\circ g_{\vec e}=f_{\vec e} \circ F_{s(\vec e)}$ for all $\vec e \in \vec E(T)$ and that the $F_v$'s are all injective. Note that $X$ parametrizes all ``embeddings" of ${\underline x}$-dimensional subrepresentations of $M_\Gamma$. We have
\[\begin{tikzcd}
&X\ar[dl,"p_1"']\ar[dr,"p_2"]&\\
Y:=\displaystyle\prod_{\vec e\in \vec E(T)}\h(\kappa^{x_{s(\vec e)}},\kappa^{x_{t(\vec e)}})&&\Gr({\underline x}, M_\Gamma)
\end{tikzcd}\]
where $p_1$ is the forgetful map and $p_2$ sends $((g_{\vec e})_{\vec e},(F_v)_v)\in X$ to $((F_v(\kappa^{x_v})_{v\in V(T)})$.

Moreover, denote $\GL_{{\underline x}}=\prod_{v\in V(T)}\GL_{x_v}(\kappa)$, there exists a $\GL_{{\underline x}}$-action on $Y$ 
and a free $\GL_{{\underline x}}$-action on $X$ turning $p_1$ into a $\GL_{{\underline x}}$-equivariant morphism: let $(\phi_v)_v$ be an element in $\GL_{{\underline x}}$, it sends $(g_{\vec e})_{\vec e}\in Y$ to $(\phi_{t(\vec e)}\circ g_{\vec e}\circ\phi^{-1}_{s(\vec e)})_{\vec e}$, and $((g_{\vec e})_{\vec e},(F_v)_v)\in X$ to $((\phi_{t(\vec e)}\circ g_{\vec e}\circ\phi^{-1}_{s(\vec e)})_{\vec e},(F_v\circ \phi^{-1}_v)_v)$ respectively. We have: (i) 
the $\GL_{{\underline x}}$-orbits in $Y$ precisely correspond to isomorphism classes of representations of $Q(\Gamma)$ of dimension ${\underline x}$, and the orbit $\mathcal O_M$ in $Y$ has dimension $\dim\GL_{{\underline x}}-\dim\aut(M)=\dim\GL_{{\underline x}}-\dim\End(M)$; (ii) the fibers of $p_2$ are the (free) $\Gr_{{\underline x}}$-orbits of $X$, and $p^{-1}_2(\mathcal S_M)=p^{-1}_1(\mathcal O_M)$; and (iii) the fiber of $p_1$ over any point of $\mathcal O_M$ is the set of injections $M\hookrightarrow M_\Gamma$. Thus, one can conclude that $\mathcal{S}_M$ has dimension 
\[\dim\GL_{{\underline x}}-\dim\End(M)+\dim\h(M,M_\Gamma)-\dim\GL_{{\underline x}}=\dim\h(M,M_\Gamma)-\dim\End(M).\]
\end{proof}

By Proposition \ref{prop:indecomp of loc linear indep} and Proposition \ref{prop:quiver stratification}, the computation of $\dim \mathcal S_M$ reduces to the computation of  the dimensions of the Hom spaces between the $P_v$'s and $R_{\vec e}$'s.

\begin{lem}\label{lem:hom_dim}
We have: 
\begin{enumerate}
    \item $\dim\h(P_v,P_{v'})=1$. 
    \item $\dim \h(R_{\vec e},P_v)=\begin{cases}0\text{ if }v \in A_{\vec e}\\
    1\text{ otherwise. }
    \end{cases}$
     \item $\dim \h(P_{v},R_{\vec e})=\begin{cases}0\text{ if }v\in A_{\cev e}\\
    1\text{ otherwise. }
    \end{cases}$
    \item $\dim \h(R_{\vec e_1},R_{\vec e_2})=\begin{cases}0\text{ if }A_{\vec e_1}\subset A_{\vec e_2}.\\
    1\text{ otherwise. }
    \end{cases}$
\end{enumerate}
\end{lem}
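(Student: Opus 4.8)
The plan is to deduce all four formulas from two observations valid for an \emph{arbitrary} representation $M$ of $(Q(\Gamma),J_\Gamma)$, namely
\[
\h(P_v,M)\cong M_v\qquad\text{and}\qquad \h(R_{\vec e},M)\cong\ker\!\big(f^M_{\vec e}\colon M_{s(\vec e)}\to M_{t(\vec e)}\big).
\]
The first is just the universal property of the indecomposable projective $P_v=A_\Gamma\cdot\epsilon_v$ of Lemma~\ref{lem:indecomp_proj}: a morphism $P_v\to M$ is determined freely by the image of the generator at $v$. For the second I would use the short exact sequence
\[
0\longrightarrow R_{\cev e}\longrightarrow P_{s(\vec e)}\longrightarrow R_{\vec e}\longrightarrow 0:
\]
the subrepresentation of $P_{s(\vec e)}$ supported on $A_{\cev e}$ is (a copy of) $R_{\cev e}$, and the quotient is $R_{\vec e}$, because on the subtree $A_{\vec e}$ the representation $R_{\vec e}$ agrees with the restriction of $P_{s(\vec e)}$, which by Lemma~\ref{lem:locally linearly independent underlying graph}(4) is the indecomposable projective at $s(\vec e)$ of the still locally linearly independent configuration $\Gamma|_{A_{\vec e}}$. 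Applying $\h(-,M)$ and the first observation to $P_{s(\vec e)}$, a morphism corresponds to a vector $m\in M_{s(\vec e)}$, and it factors through $R_{\vec e}$ exactly when it annihilates $R_{\cev e}$; since $R_{\cev e}$ is generated by its component at $t(\vec e)$, this is precisely the condition $f^M_{\vec e}(m)=0$. (Any "backward" commutativity conditions one might worry about are automatic: a path leaving a tree-vertex along an edge and returning has positive $\Gamma$-weight, hence lies in $J_\Gamma$ and acts by zero on $M$.)

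Granting these master formulas, parts (1) and (3) are immediate: $\h(P_v,P_{v'})\cong (P_{v'})_v=\kappa$, and $\h(P_v,R_{\vec e})\cong (R_{\vec e})_v$, which is $\kappa$ when $v\in A_{\vec e}$ and $0$ when $v\in A_{\cev e}$. For (2) I would evaluate the second formula at $M=P_v$: the map $f^{P_v}_{\vec e}\colon\kappa\to\kappa$ is the identity when $\vec e$ points away from $v$ — equivalently $v\in A_{\vec e}$ — and is zero when $v\in A_{\cev e}$, so the kernel has dimension $0$ or $1$ accordingly. For (4) I would evaluate at $M=R_{\vec e_2}$ and run a short case analysis on how the two edges $e_1,e_2$ sit in $T$ (deleting both cuts $T$ into at most three subtrees): $(R_{\vec e_2})_{s(\vec e_1)}\neq 0$ iff $s(\vec e_1)\in A_{\vec e_2}$, and when both endpoints of $e_1$ lie in $A_{\vec e_2}$ the map $f^{R_{\vec e_2}}_{\vec e_1}$ is the identity exactly when $\vec e_1$ points away from the root $s(\vec e_2)$ inside $A_{\vec e_2}$; tracking these two facts against the position of $e_1$ relative to $e_2$ yields the claimed dichotomy.

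The main obstacle will be the second master formula: correctly identifying $P_{s(\vec e)}$ as a projective cover of $R_{\vec e}$ with kernel $R_{\cev e}$, which relies on the explicit description of $J_\Gamma$ and on the restriction behaviour of projectives under passing to a subconfiguration (Lemma~\ref{lem:locally linearly independent underlying graph}). Once that is in hand the four computations are essentially bookkeeping — every vector space occurring is $\kappa$ or $0$ — but one has to be scrupulous about the orientation conventions ($\vec e$ versus $\cev e$, $A_{\vec e}$ versus $A_{\cev e}$, "pointing away from $v$") so that the case analysis in (4) comes out consistently.
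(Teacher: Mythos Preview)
Your approach is sound and considerably more systematic than what the paper does. The paper only writes out part~(1), by the direct observation that a morphism out of $P_w$ is determined by the image of its generator at~$w$; parts (2)--(4) are declared ``similar and not used'' and left to the reader. Your two master formulas
\[
\h(P_v,M)\cong M_v,\qquad \h(R_{\vec e},M)\cong\ker\big(f^M_{\vec e}\colon M_{s(\vec e)}\to M_{t(\vec e)}\big)
\]
reduce all four parts to uniform one-line computations. The first is standard; for the second, your identification of the short exact sequence $0\to R_{\cev e}\to P_{s(\vec e)}\to R_{\vec e}\to 0$ is correct, and the argument that a morphism $P_{s(\vec e)}\to M$ factors through $R_{\vec e}$ exactly when $f^M_{\vec e}(m)=0$ goes through because $R_{\cev e}\subset P_{s(\vec e)}$ is generated at $t(\vec e)$. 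Your treatments of (1)--(3) are then immediate and match the stated answers.

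One warning about part~(4): when you actually carry out the case analysis your method dictates, you will find
\[
\dim\h(R_{\vec e_1},R_{\vec e_2})=\begin{cases}1 & \text{if }A_{\vec e_1}\subseteq A_{\vec e_2},\\ 0 & \text{otherwise,}\end{cases}
\]
which is the \emph{opposite} of the dichotomy as printed. Two sanity checks confirm this: $\End(R_{\vec e})$ contains the identity and is one-dimensional, while $\h(R_{\vec e},R_{\cev e})=0$ since the supports $A_{\vec e}$ and $A_{\cev e}$ are disjoint; neither agrees with the stated formula under any reading of~$\subset$. This is evidently a typo in the statement rather than a flaw in your method, and since the paper never uses~(4), it has no downstream effect. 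Just be sure to record the corrected version rather than force your case analysis to match the printed one.
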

\begin{proof}
We only prove part (1), part (2,3,4) is similar and not used in the rest of the paper, so we leave the details to the reader. Denote $P_w=(\langle a_v\rangle)_{v\in V(T)}$, where $0\neq a_v\in \kappa$. Up to scaling, we assume $a_v=f_{w,v}(a_w)$ for all $v$. Hence for any morphism $F$ from $P_w$ to $P_{w'}:=(\langle a'_v\rangle)_v$ we must have $F(a_v)=F(f(w,v)(a_w))=f_{w,v}(F(a_w))$. In other words, $F$ is determined by $F(a_w)$. On the other hand, each choice of $F(a_w)\in\langle a'_w\rangle$ gives rise to a morphism 
$F$ from $P_w$ to $P_{w'}$. Hence we have $\dim \h(P_w,P_{w'})=1$.
\end{proof}

We now give an alternate description of the preorder on $\Gr(\mathbf r,M_\Gamma)$, namely, it is induced by $\Phi_{\underline x}$ and the order on $\mathbb Z^{\vec E(T)}_{\geq 0}$.

\begin{prop}\label{prop:linked degeneration stratification}
Let $M,M'\in \Gr({\underline x},M_\Gamma)$ be two subrepresentations. Write $M:=(U_v)_v\cong \big( \bigoplus_v P^{r_v}_v\big)\op \big( \bigoplus_{\vec e} R_{\vec e}^{r_{\vec e}}\big)$ as in Proposition \ref{prop:indecomp of loc linear indep}.

\begin{enumerate}
    \item If there is an $\vec \iota\in\vec E(T)$ such that $r_{\vec \iota}>0$ and $r_{\cev \iota}>0$,  
then there is a representation $N\in \Gr({\underline x},M_\Gamma)$ such that $\mathcal S_M\subset \mathcal S^c_N$, and 
$$N\cong \Big( \bigoplus_{v\in V(T)} P^{r_v}_v\Big)\op \Big( \bigoplus_{\vec e\in \vec E(T)\backslash \{\vec \iota,\ \cev \iota\}} R_{\vec e}^{r_{\vec e}}\Big)\oplus R_{\vec \iota}^{r_{\vec \iota}-1}\oplus R_{\cev \iota}^{r_{\cev \iota}-1}\oplus P_{s(\vec \iota)}.$$

\item $M\prec M'$ if and only if $\Phi_{\underline x}(M)\leq \Phi_{\underline x}(M')$.
\end{enumerate}

\end{prop}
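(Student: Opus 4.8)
The proposition has two parts; I would prove (1) by exhibiting an explicit degeneration and then use it as the sole ``elementary move'' to deduce (2).

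\smallskip
\noindent\emph{Part (1).} Write $u=s(\vec\iota)$ and $u'=t(\vec\iota)$. First I would fix, via Proposition~\ref{prop:indecomp of loc linear indep}(2), an internal decomposition $M=R'_{\vec\iota}\oplus R'_{\cev\iota}\oplus M^\flat$ inside $M_\Gamma$ in which $R'_{\vec\iota}\cong R_{\vec\iota}$ is the cyclic subrepresentation generated by some $0\neq a\in\ker f_{\vec\iota}\subseteq\overline L_u$ and $R'_{\cev\iota}\cong R_{\cev\iota}$ is generated by some $0\neq b\in\ker f_{\cev\iota}\subseteq\overline L_{u'}$; that such $a,b$ really generate copies of $R_{\vec\iota},R_{\cev\iota}$ (in particular that all intermediate images are nonzero) follows from Lemma~\ref{lem:locally linearly independent underlying graph}(3) together with Lemma~\ref{lem:convex hull of two points}(2),(3). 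Since $\ker f_{\cev\iota}=\operatorname{Im}f_{\vec\iota}$, write $b=f_{\vec\iota}(\tilde b)$ with $\tilde b\in\overline L_u$, put $c_t=a+t\tilde b$, and let $M_t\subseteq M_\Gamma$ have component $\langle f_{u,v}(c_t)\rangle$ at each $v\in A_{\vec\iota}$ and the \emph{$t$-independent} component $\langle f_{u',v}(b)\rangle$ at each $v\in A_{\cev\iota}$. Because the minimal path from $u$ to a vertex of $A_{\cev\iota}$ runs through $u'$ and $f_{\vec\iota}(a)=0$, one has $f_{u,v}(c_t)=t\,f_{u',v}(b)$ there, so $M_t$ is a subrepresentation for every $t$; away from a finite set of $t$'s not containing $0$ it has dimension vector $\mathbf 1$, coincides with the cyclic subrepresentation of $c_t$ for $t\neq 0$ (hence $\cong P_u$), and equals $R'_{\vec\iota}\oplus R'_{\cev\iota}$ at $t=0$. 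Then $\widetilde M_t:=M^\flat+M_t$ is a direct sum for $t$ near $0$ (an open condition, valid at $t=0$), with $\widetilde M_0=M$ and $\widetilde M_t\cong M^\flat\oplus P_u=N$ for generic $t$; passing to closures gives $M\in\mathcal S_N^c$, and since the construction used only the isomorphism class of $M$, $\mathcal S_M\subseteq\mathcal S_N^c$. I would also record here, using additivity of $\Phi_{\underline x}$ under direct sums and the elementary identity $\Phi_{\underline x}(P_u)=\Phi_{\underline x}(R_{\vec\iota}\oplus R_{\cev\iota})+e_{\vec\iota}$ (the two agree in every coordinate except $\vec\iota$, by Proposition~\ref{prop:ambient representation}(3)), that $\Phi_{\underline x}(N)=\Phi_{\underline x}(M)+e_{\vec\iota}$.

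\smallskip
\noindent\emph{Part (2), easy direction and set-up.} As noted before the statement, $\Phi_{\underline x}$ is constant on each stratum (Lemma~\ref{lem:stratification map}) and, since $\dim f_{\vec e}(U_{s(\vec e)})$ is lower semicontinuous in the varying subrepresentation, can only decrease under specialization; hence $M\prec M'$ forces $\Phi_{\underline x}(M)\leq\Phi_{\underline x}(M')$. Conversely, assume $\Phi_{\underline x}(M)\leq\Phi_{\underline x}(M')$ and induct on $\delta:=\sum_{\vec e}\bigl(\Phi_{\underline x}(M')_{\vec e}-\Phi_{\underline x}(M)_{\vec e}\bigr)\geq 0$. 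If $\delta=0$ then $M,M'$ lie in the same fibre of $\Phi_{\underline x}$, i.e.\ the same stratum, so $M\cong M'$ and $M\in\mathcal S_{M'}\subseteq\mathcal S_{M'}^c$.

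\smallskip
\noindent\emph{Part (2), the inductive step.} Suppose $\delta>0$. Writing $r_v,r'_v$ for the multiplicities of $P_v$ in $M,M'$, the formula of Lemma~\ref{lem:stratification map} gives $r'_v-r_v=\sum_{\vec e\in\vec E_v}\bigl(\Phi_{\underline x}(M')_{\vec e}-\Phi_{\underline x}(M)_{\vec e}\bigr)\geq 0$, and summing over $v$ (using $\vec E(T)=\bigsqcup_v\vec E_v$) yields $\sum_v(r'_v-r_v)=\delta>0$. Hence there exist a vertex $u$ with $r'_u\geq 1$ and an edge $\vec\iota\in\vec E_u$ with $\Phi_{\underline x}(M')_{\vec\iota}>\Phi_{\underline x}(M)_{\vec\iota}$: otherwise every $v$ with $r'_v\geq 1$ would satisfy $\Phi_{\underline x}(M')=\Phi_{\underline x}(M)$ on all of $\vec E_v$, hence $r_v=r'_v$, while every $v$ with $r'_v=0$ satisfies $0\leq r_v\leq r'_v=0$, hence again $r_v=r'_v$, forcing $\delta=0$. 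Now let $M''$ be $M'$ with one $P_u$-summand replaced by $R_{\vec\iota}\oplus R_{\cev\iota}$. The multiplicities of $R_{\vec\iota}$ and $R_{\cev\iota}$ in $M''$ are positive, so Part~(1) applies to $M''$ with this $\vec\iota$, and the representation it produces is exactly $M'$; therefore $\mathcal S_{M''}\subseteq\mathcal S_{M'}^c$, whence $\mathcal S_{M''}^c\subseteq\mathcal S_{M'}^c$. By Part~(1) also $\Phi_{\underline x}(M'')=\Phi_{\underline x}(M')-e_{\vec\iota}$, so $\Phi_{\underline x}(M)\leq\Phi_{\underline x}(M'')$ and the corresponding sum is $\delta-1$. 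By the inductive hypothesis $M\in\mathcal S_{M''}^c\subseteq\mathcal S_{M'}^c$, i.e.\ $M\prec M'$.

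\smallskip
\noindent\emph{Where the difficulty lies.} The crux is the combinatorial claim in the inductive step: whenever $\Phi_{\underline x}(M)\lneq\Phi_{\underline x}(M')$, some $P$-summand of $M'$ can be split along an incident edge so as to move $\Phi_{\underline x}$ strictly toward $\Phi_{\underline x}(M)$. The identity $\sum_v(r'_v-r_v)=\delta$ is exactly what makes this work, and implicit in the whole scheme is that ``splitting a $P$'' (Part~(1)) is the only elementary degeneration available, so that no other move is needed. A secondary subtlety is in Part~(1): the one-parameter family must be arranged to specialize to $R_{\vec\iota}\oplus R_{\cev\iota}$, not merely to $R_{\vec\iota}$, which is why its $A_{\cev\iota}$-component is kept constant in $t$.
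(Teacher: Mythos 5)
Your part (1) is correct and is essentially the paper's own construction: both arguments pick generators $a$ of the $R_{\vec\iota}$-summand and $b$ of the $R_{\cev\iota}$-summand, lift $b$ through $f_{\vec\iota}$ using Lemma \ref{lem:convex hull of two points}(2), deform $a$ to $a+t\tilde b$ while the $A_{\cev\iota}$-components stay fixed, and check directness with the complement by openness in $t$. The forward implication in (2) and the combinatorial identity $\sum_v(r'_v-r_v)=\delta$ are also fine.

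The gap is in the inductive step of the converse of (2). You run the induction \emph{downward} from $M'$, defining $M''$ to be ``$M'$ with one $P_u$-summand replaced by $R_{\vec\iota}\oplus R_{\cev\iota}$,'' and then apply both part (1) and the inductive hypothesis to $M''$. Both of these require $M''\in\Gr({\underline x},M_\Gamma)$, i.e.\ that this abstract isomorphism class is actually realized by a subrepresentation of $M_\Gamma$; otherwise $\mathcal S_{M''}=\emptyset$ and the chain $M\in\mathcal S^c_{M''}\subseteq\mathcal S^c_{M'}$ collapses. You never establish this, and it is not formal: if $x$ generates the $P_u$-summand one can write $x=a+\tilde b$ with $0\neq a\in\ker f_{\vec\iota}$ and degenerate $\langle x\rangle$ to a copy of $R_{\vec\iota}\oplus R_{\cev\iota}$, but there is no a priori reason the limit at $t=0$ remains in direct sum with the complement $M^\flat$ (directness is an open condition known at $t=1$, not at $t=0$). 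For ${\underline x}=\mathbf r$ realizability of $\Phi_{\underline x}(M')$ minus one in the $\vec\iota$-coordinate can be extracted from Theorem \ref{thm:linked degeneration stratification}(1), but that is a later, independent result, and the proposition is stated for arbitrary ${\underline x}$. The paper avoids the issue by inducting \emph{upward} from $M$: choosing $\vec\iota$ with $\Phi_{\underline x}(M)_{\vec\iota}<\Phi_{\underline x}(M')_{\vec\iota}$, the formulas of Lemma \ref{lem:stratification map} together with $r'_{\vec\iota},r'_{\cev\iota}\geq 0$ give $r_{\vec\iota}\geq r'_{\vec\iota}+1>0$ and $r_{\cev\iota}>0$ in the \emph{current} representation, so part (1) applies to it directly and explicitly produces the next link of the chain as a genuine subrepresentation; realizability is then automatic at every step. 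Reversing your induction in this way repairs the argument.
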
 
\begin{proof}
(1) We may write 
$$M'=(U'_v)_v:= \big( \bigoplus_{v\in V(T)} P^{r_v}_v\big)\op \big( \bigoplus_{\vec e\in \vec E(T)\backslash \{\vec \iota,\ \cev \iota\}} R_{\vec e}^{r_{\vec e}}\big)\oplus R_{\vec \iota}^{r_{\vec \iota}-1}\oplus R_{\cev \iota}^{r_{\cev \iota}-1}.$$
Let $u=s(\vec \iota)$ and $u'=t(\vec \iota)$. Then $M= M'\oplus R_{\vec\iota}\op R_{\cev \iota}$ and we can pick vectors $a\in U_{u}$ and $a'\in U_{u'}$ that generate $R_{\vec \iota}$ and $R_{\cev \iota}$, respectively. It follows that $f_{\cev \iota}(a')=0$ and $f_{\vec \iota}(a)=0$, hence we can find $b\in \ov L_u$ such that $f_{\vec \iota}(b)=a'$ by Lemma \ref{lem:convex hull of two points} (2). Now for a general $t\in \kappa$, we have $f_{\vec \iota}(a+tb)=ta'\neq 0$, and, for $\vec e\in \vec E_u\backslash \vec \iota$, we have $f_{\vec e}(a+tb)=f_{\vec e}(a)+tf_{\vec e}(b)\neq 0$ since $f_{\vec e}(a)\neq 0$. Hence, by local linear independence, $a+tb$ generates a subrepresentation $(U_{t,v})_v$ of $M_\Gamma$ which is isomorphic to $P_u$. Moreover, for $w\in A_{\vec \iota}$, we know that $U_{t,w}$ is independent with $U'_w$ since $U_{0,w}=\kappa\cdot f_{u,w}(a)$ is so; for $w\in A_{\cev \iota}$, we have $U_{t,w}=\kappa\cdot  f_{u',w}(a')$ is also linearly independent with $U'_w$. As a result, let $N_t:=M'\oplus (U_{t,v})_v$, then $N_t\cong N$ and $N_t\xrightarrow{t\rightarrow 0} M$, hence $\mathcal S_M\subset \mathcal S^c_N$.

(2) Since $\Phi_{\underline x}$ is order preserving, it remains to show that $\Phi_{\underline x}(M)\leq \Phi_{\underline x}(M')$ implies $M\prec M'$. We may assume $\Phi_{\underline x}(M)\neq\Phi_{\underline x}(M')$, otherwise $\mathcal S_M=\mathcal S_{M'}$ by Lemma \ref{lem:stratification map} and we are done. Pick a tuple $D\in\mathbb Z^{\vec E(T)}_{\geq 0}$ such that      $\Phi_{\underline x}(M)\leq D\leq \Phi_{\underline x}(M')$, and $|D-\Phi_{\underline x}(M)|=1$.  
We may assume that  
\[D-\Phi(M)=(0\hh 0,1_{\vec \iota},0\hh 0).\]  
Then again, by Lemma \ref{lem:stratification map}, we have $\Phi^{-1}_{\underline x}(D)=\mathcal S_N$, where $N$ is the same as  part (1). Note that the existence of $N$, in other words the positivity of $r_{\vec\iota}$ and $r_{\cev\iota}$ in $M$, is actually ensured by the fact $\Phi_{\underline x}(M')\geq D$. Hence $M\prec N$ by part (1). Now replace $M$ with $N$ and proceed inductively, we have $M\prec N\prec\cdots\prec  M'$.
\end{proof}

\subsection{The geometry of linked Grassmannians.} In this subsection we investigate the geometry of linked Grassmannians through the tools developed in the preceding two subsections. We start from computing all possible strata of the special fiber of $LG_r(\Gamma)$, namely $\Gr(\mathbf r,M_\Gamma)$. This amounts to computing the image of $\Phi:=\Phi_\mathbf r$.

\begin{thm}\label{thm:linked degeneration stratification}
Let $\Gamma$ be a locally linearly independent configuration in $\mathfrak B^0_d$. Suppose $M_\Gamma=\oplus P^{d_v}_v$ where $\sum d_v=d$. Given a tuple $D:=(d_{\vec e})_{\vec E(T)}\in\mathbb Z^{|\vec E(T)|}_{\geq 0}$,
\begin{enumerate}
\item
$D$ is contained in the image of $\Phi$ if and only if
\begin{equation}\label{eq:strata 0}
\begin{cases}
0\leq d_{\vec e}\leq r-d_{\cev e}\leq \sum_{v\in A_{\vec e}}d_v\ \mathrm{for\ all\ }\vec e;\\
r-\sum_{\vec e\in \vec E_v}(r-d_{\vec e})\geq 0 \mathrm{\ for\ all\ } v\in V(T).
\end{cases}
\end{equation}

\item
The irreducible components of $\Gr(\mathbf r,M_{\Gamma})$ are of the form $\mathcal S^c_N$, where $N\cong \oplus P^{r_v}_v$ runs over all isomorphic classes of projective subrepresentations of $M_\Gamma$, which are classified by the conditions $\sum r_v=r$ and
\begin{equation}\label{eq:strata 1}\Phi(N)=\Big(\sum_{v\in A_{\vec e}}r_v\Big)_{\vec e\in \vec E(T)}\leq \Big(\sum_{v\in A_{\vec e}}d_v\Big)_{\vec e\in\vec E(T)}.\end{equation}
As a result, $\mathcal S^c_N$ is the set of subrepresentations $M=(U_v)_v$ such that $\dim f_{\vec e }(U_{s(\vec e)})\leq  \sum_{v\in A_{\vec e}}r_v$ for all $\vec e$, and $\Gr(\mathbf r,M_\Gamma)$ has pure dimension $r(d-r)$.
\end{enumerate}
\end{thm}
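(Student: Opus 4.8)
The plan is to combine the stratification machinery of Section~\ref{subsec:stratification} with a direct dimension count for the projective strata. For part (1), I would first analyze when a tuple $D=(d_{\vec e})_{\vec e}$ lies in the image of $\Phi$. By Lemma~\ref{lem:stratification map}, the fiber $\Phi^{-1}(D)$ is nonempty exactly when the formulas $r_v=r-\sum_{\vec e\in\vec E_v}(r-d_{\vec e})$ and $r_{\vec e}=r-d_{\vec e}-d_{\cev e}$ produce the multiplicities of an actual $\mathbf r$-dimensional subrepresentation $M\cong(\bigoplus P_v^{r_v})\oplus(\bigoplus R_{\vec e}^{r_{\vec e}})$ of $M_\Gamma$. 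So the conditions are: all $r_v\ge 0$, all $r_{\vec e}\ge 0$, and — crucially — that such a subrepresentation \emph{embeds} into $M_\Gamma$, i.e. respects the ambient dimensions $d_v$. The inequality $d_{\vec e}\le\sum_{v\in A_{\vec e}}d_v$ is exactly the condition that $\dim f_{\vec e}(U_{s(\vec e)})$ cannot exceed the total ``mass'' of $M_\Gamma$ on the $A_{\vec e}$-side; and $r-d_{\cev e}=d_{\vec e}+r_{\vec e}\ge d_{\vec e}\ge 0$ records $r_{\vec e}\ge 0$. For the converse I would build $M$ explicitly, peeling off a leaf as in the proof of Lemma~\ref{lem:decomp}: the local linear independence guarantees enough room to place the required copies of $P_v$ and $R_{\vec e}$ inside each $\ov L_v$ without collision, provided the displayed inequalities hold.

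For part (2), the key input is Proposition~\ref{prop:linked degeneration stratification}(1)--(2): whenever a stratum $\mathcal S_M$ has some $r_{\vec\iota}>0$ and $r_{\cev\iota}>0$, it lies in the closure of a strictly larger stratum (trading $R_{\vec\iota}\oplus R_{\cev\iota}$ for $P_{s(\vec\iota)}$ and an extra $R$). Iterating, every stratum specializes into one with all $r_{\vec e}=0$, i.e. a \emph{projective} stratum $\mathcal S_N$ with $N\cong\bigoplus P_v^{r_v}$, $\sum r_v=r$. So the irreducible components of $\Gr(\mathbf r,M_\Gamma)$ are among the closures $\mathcal S_N^c$ of projective strata. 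That these are genuinely maximal — no two projective strata are comparable — follows because $\Phi(N)=(\sum_{v\in A_{\vec e}}r_v)_{\vec e}$ and different partitions $(r_v)$ of $r$ into the allowed pieces give incomparable tuples under the componentwise order (if $\Phi(N)\le\Phi(N')$ componentwise with both projective and $\sum r_v=\sum r_v'=r$, a telescoping/averaging argument over the tree forces $r_v=r_v'$). The admissibility constraint \eqref{eq:strata 1} is just \eqref{eq:strata 0} specialized to $r_{\vec e}=0$, i.e. $d_{\vec e}=r-d_{\cev e}=\sum_{v\in A_{\vec e}}r_v$ must satisfy $\sum_{v\in A_{\vec e}}r_v\le\sum_{v\in A_{\vec e}}d_v$. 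The description of $\mathcal S_N^c$ as $\{M=(U_v):\dim f_{\vec e}(U_{s(\vec e)})\le\sum_{v\in A_{\vec e}}r_v\ \forall\vec e\}$ follows from Proposition~\ref{prop:linked degeneration stratification}(2): $M\in\mathcal S_N^c$ iff $\Phi(M)\le\Phi(N)$, which is precisely that inequality; one should check this locus is closed (it is, being cut out by rank conditions) and irreducible (it is the closure of a single stratum).

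Finally, the purity statement: I would compute $\dim\mathcal S_N^c=\dim\mathcal S_N=\dim\h(N,M_\Gamma)-\dim\End(N)$ via Proposition~\ref{prop:quiver stratification}, using Lemma~\ref{lem:hom_dim} and $N\cong\bigoplus P_v^{r_v}$, $M_\Gamma\cong\bigoplus P_w^{d_w}$. Then $\dim\h(N,M_\Gamma)=\sum_{v,w}r_v d_w\dim\h(P_v,P_w)=\sum_{v,w}r_v d_w=(\sum r_v)(\sum d_w)=rd$, and $\dim\End(N)=\sum_{v,v'}r_v r_{v'}\dim\h(P_v,P_{v'})=(\sum r_v)^2=r^2$, so every projective stratum has dimension $rd-r^2=r(d-r)$, independent of $N$; hence $\Gr(\mathbf r,M_\Gamma)$ is pure of dimension $r(d-r)$. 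The main obstacle I expect is part~(2)'s incomparability claim — showing that the closures $\mathcal S_N^c$ of distinct projective strata are not nested, which amounts to a combinatorial lemma on the tree $T$ about when two partitions of $r$ indexed by $V(T)$ induce comparable ``cut functions'' $\vec e\mapsto\sum_{v\in A_{\vec e}}r_v$; everything else is a fairly mechanical assembly of the preceding results.
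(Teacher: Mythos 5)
Your part (2) is essentially sound and close in spirit to the paper's argument: the specialization step via Proposition \ref{prop:linked degeneration stratification} is exactly what the paper uses, and your incomparability argument for distinct projective strata (using that $A_{\cev e}=V(T)\setminus A_{\vec e}$ forces $\sum_{v\in A_{\vec e}}r_v=\sum_{v\in A_{\vec e}}r'_v$ for every edge, hence $r_v=r'_v$ by peeling leaves) is a clean substitute for the paper's observation that a non-maximal stratum must contain some $R_{\vec e}$ in its decomposition. The dimension count via Proposition \ref{prop:quiver stratification} and Lemma \ref{lem:hom_dim} is identical to the paper's.

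The genuine gap is in part (1), the realizability direction: given a tuple $D$ satisfying \eqref{eq:strata 0}, you must actually produce an $\mathbf r$-dimensional subrepresentation $M$ with $\Phi(M)=D$, and this is the technical heart of the theorem — it occupies most of the paper's proof. Your proposal reduces it to the assertion that ``local linear independence guarantees enough room to place the required copies of $P_v$ and $R_{\vec e}$ inside each $\ov L_v$ without collision, provided the displayed inequalities hold,'' but that assertion is precisely the statement to be proved, and it does not follow from a leaf-peeling induction in any obvious way: the summands $R_{\vec e}$ and $P_u$ interact globally across the tree, and the inequalities \eqref{eq:strata 0} enter combinatorially, not just locally at each vertex. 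The paper's construction fixes global basis vectors $\zeta^i_v$ adapted to the decomposition $M_\Gamma\cong\oplus P_v^{d_v}$, writes down candidate generators $\xi^j_{\vec e}$ and $\xi^j_u$ with indeterminate coefficients, and shows that for a generic choice the resulting minimal subrepresentation has dimension $\mathbf r$ and realizes $D$. The nontrivial point is exhibiting a nonvanishing $r\times r$ minor of the coefficient matrix, done by inductively assigning to each candidate generator a distinct global basis vector supported on the correct subtree; it is exactly there that both inequalities of \eqref{eq:strata 0} are used (once per edge and once per vertex) to guarantee that unassigned basis vectors remain at each inductive step. Without this construction, or a genuine substitute, part (1) is unproved; and since the classification \eqref{eq:strata 1} in part (2) is the specialization of \eqref{eq:strata 0} to projective $N$ — note that \eqref{eq:strata 1} is a condition on cuts of the tree and does \emph{not} reduce to the naive vertex-wise condition $r_v\le d_v$ — part (2) inherits the gap: you still need to know that every admissible projective $N$ actually embeds in $M_\Gamma$.
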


To sum up, the stratification $(\mathcal S_M)_{[M]}$ of $\Gr(\mathbf r,M_\Gamma)$ is naturally induced by the tuples $D$ satisfying (\ref{eq:strata 0}). Moreover, the strata contained in an irreducible component $\mathcal S^c_N$ correspond to all $D$ such that, in addition to (\ref{eq:strata 0}), $D\leq \Phi(N)$.

\begin{proof}[Proof of Theorem \ref{thm:linked degeneration stratification}.]
(1) \textit{Step 1. Let $M=(U_v)_v$ be an $\mathbf r$-dimension subrepresentation of $M_\Gamma$.} We have $$0\leq \dim f_{\vec e}(U_{s(\vec e)})\leq \dim (\ker f_{\cev e}\cap {U_{s(\cev e)}})= r-\dim f_{\cev e}(U_{s(\cev e)})\leq \dim \ker f_{\cev e}=\sum_{v\in A_{\vec e}}d_v.$$ This proves the first inequality in (\ref{eq:strata 0}). On the other hand, since $\Gamma$ is locally linearly independent,  $$r\geq \dim\Big(\sum_{\vec e\in \vec E_v}(\ker f_{\vec e}\cap U_v)\Big)=\sum_{\vec e\in \vec E_v}\dim(\ker f_{\vec e}\cap U_v)=\sum_{\vec e\in \vec E_v}(r-\dim f_{\vec e}(U_v)).$$ This gives the second inequality in (\ref{eq:strata 0}).

\textit{Step 2: Suppose we have a tuple $D$ satisfying (\ref{eq:strata 0}).} 
Recall that the projective direct summand $P^{d_v}_v$ of $M_\Gamma$ is defined by a $d_v$-dimensional subspace of $\ov L_v$ whose image in $\ov L_{v'}$ under $f_{v,v'}$ still has dimension $d_v$. Pick a basis $\{\zeta^i_v\}_{i\in [d_v]}$ of this subspace of $\ov L_v$. 
Then for each $u\in V(T)$, the vectors $\{f_{v,u}(\zeta^i_v)\}_{v\in V(T),i\in [d_v]}$ form a basis of $\ov{L}_u$.  Let us call $\zeta^i_v$ the \textit{$i$-th global basis vector on $v$}.

To show the realizability of $D$, by Lemma \ref{lem:stratification map}, we should look for subrepresentations $M:=(U_v)_v$ isomorphic to $(\bigoplus_v P^{s_v}_v)\op (\bigoplus_{\vec e} R^{s_{\vec e}}_{\vec e})$, where 
\[s_v=r-\sum_{\vec e\in \vec E_v}(r-d_{\vec e})\geq 0\mathrm{\ and\ }s_{\vec e}=r-d_{\vec e}-d_{\cev e}=s_{\cev e}\geq 0.
\]
For simplicity, we set $s_e=s_{\vec e}=s_{\cev e}$. Then, looking at the dimension of any $U_u$, we have $\sum_{v\in V(T)}s_v+\sum_{e\in E(T)}s_e=r.$


For each $u\in V(T)$, consider the item $P_u^{s_u}\op(\bigoplus_{\vec e\in \vec E_u}R^{s_e}_{\vec e})$; we see that for each $\vec e\in \vec E_u$, there naively should be $s_e$ basis vectors of $U_u$ that lie in $\ker f_{\vec e}$ (which generate the term $R^{s_e}_{\vec e}$); in other words, these basis vectors ``come from the global basis vectors on $A_{\cev e}$". Similarly, there should be another $s_u$ basis vectors of $U_u$ coming from all global basis vectors, and these vectors should generate the term $P^{r_u}_u$. To make this precise, $U_u$ must contain a subspace $W_u$ generated by, for each $\vec e\in \cev E_u$ (here we are switching to the directed edges towards $u$), $s_e$ vectors ``from $A_{\vec e}$":
$$\xi^j_{\vec e}:=\sum_{v\in A_{\vec e}}\sum_{i\in [d_v]} a^{i,j}_{v,\vec e}f_{v,u}(\zeta^i_v),\mathrm{\ where\ }  j\in [s_e],$$
and $s_u$ vectors ``from all global basis vectors": $$\xi^j_u:=\sum_{v\in V(T)}\sum_{i\in [d_v]} a^{i,j}_{v,u}f_{v,u}(\zeta^i_v), \mathrm{\ where\ } j\in [s_u].$$ Here we consider all $a^\bullet_\bullet$'s as coefficients in $\kappa$ of the basis vectors of $\ov L_u$ which are not determined for now. We now let $M=(U_v)_v$ be the minimal subrepresentation such that $W_u\subset U_u$ for all $u$. We claim that for a general choice of coefficients $a^\bullet_\bullet$, $M$ will have dimension $\mathbf r$ and satisfies that $\dim f_{\vec e}(U_{s(\vec e)})=d_{\vec e}$ for all $\vec e$.

\textit{Step. 2.1. We first show that $M$ has dimension $\mathbf r$.} 

\textit{Step. 2.1.1. A simple example.} 
We start this step with illustrating the idea by an example. Let $T$ be the tree with vertices labeled by 1, 2, 3, and 4 as in the left part of Figure \ref{fig:graphproof}.  Let us use the ordered pair $(i,j)$ to denote the oriented edge of $T$ with source $i$ and target $j$. Consider the case $d=4$ and $r=2$. Assume $d_i=1$ for all $i\in [4]$. Let $D$ be the tuple such that $d_{(1,2)}=1$, $d_{(1,3)}=d_{(1,4)}=2$ and $d_{(2,1)}=d_{(3,1)}=d_{(4,1)}=0$. Straightforward calculation shows that $s_1=1$ and $s_2=s_3=s_4=0$, and $s_{(1,2)}=s_{(2,1)}=1$ and all other $s_{(i,j)}$'s vanish. Hence we are looking for a subrepresentation that is isomorphic to $P_1\op R_{(1,2)}\op R_{(2,1)}$. 

By assumption, there is exactly one global basis vector $\zeta_i$ on each vertex $i$. We have $W_3=0$ and $W_4=0$. Moreover, $W_1$ is generated by $\xi_1=\sum_{i=1}^4a_{i,1}f_{i,1}(\zeta_i)$ and $\xi_{(2,1)}=a_{2,(2,1)}f_{2,1}(\zeta_2)$, and $W_2$ is generated by $\xi_{(1,2)}=\sum_{i\neq 2}a_{i,(1,2)}f_{i,2}(\zeta_i)$.

For each $i$, by construction, $U_i$ is generated by $f_{1,i}(W_1)$ and $f_{2,i}(W_2)$. Accordingly, $U_1$ is generated by $\xi_1,\xi_{(2,1)}$ and $f_{2,1}(\xi_{(1,2)})=0$. Since $\xi_1$ and $\xi_{(2,1)}$ are not proportional, for a general choice of coefficients $a_{\bullet}$, $U_1$ has dimension $r=2$. Similarly, $U_2$ is generated by $\xi_{(1,2)}$, $f_{1,2}(\xi_1)=\sum_{i\neq 2}a_{i,1}f_{i,2}(\zeta_i)$ and $f_{1,2}(\xi_{(2,1)})=0$, hence it has dimension $2$. For $j=3,4$, we have $U_j$ generated by $f_{1,j}(\xi_1)=\sum_{i\neq j}a_{i,j}\zeta_i$ and  $f_{1,j}(\xi_{(2,1)})=a_{2,(2,1)}f_{2,j}(\zeta_2)$, and $f_{2,j}(\xi_{(1,2)})=0$, hence also has dimension $2$.

\textit{Step 2.1.2. The proof.} 
Fix $w\in V(T)$; then $U_w$ is generated by $(f_{u,w}(W_u))_{u\in V(\Gamma)}$. Let $\vec I_w\subset \vec E(T)$ be the the set of all directed edges pointing towards $w$. For each $u\neq w$, let $\vec e_u\in \vec E_u$ be the unique edge that lies in $\vec I_w$. We denote $A_u=A_{\vec e_u}$ and $A_w=V(T)$ for convenience. See the right part of Figure \ref{fig:graphproof}.

\tikzset{every picture/.style={line width=0.75pt}}
\begin{figure}[ht]	
\begin{tikzpicture}[x=0.5pt,y=0.5pt,yscale=-0.9,xscale=0.9]
\import{./}{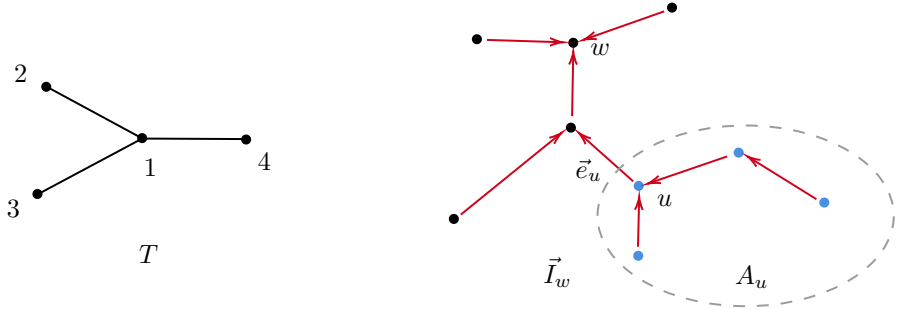}
\end{tikzpicture}		
\caption{The left part is the tree in Step 2.1.1. The right part illustrates the oriented edges (in red) contained in $\vec I_w$ and vertices (in blue) contained in $A_u$.}
\label{fig:graphproof}
\end{figure}

Note that $f_{u,w}\circ f_{v,u}=f_{v,w}$ if $v\in A_u$ and $0$ otherwise,
and if $\vec e\not\in\vec I_w$, then $f_{t(\vec e),w}(\xi^j_{\vec e})=0$ for all $j\in[s_e]$. 
It follows that $U_w$ is actually generated by the following \textit{candidate generators} 
$$\Big\{f_{u,w}(\xi^j_u)=\sum_{v\in A_u}\sum_{i\in [d_v]} a^{i,j}_{v,u}f_{v,w}(\zeta^i_v)\Big\}_{u\in V(T),\ j\in[s_u]}, \mathrm{\ and\ }$$
$$\Big\{f_{t(\vec e),w}(\xi^j_{\vec e})=\sum_{v\in A_{\vec e}}\sum_{i\in [d_v]} a^{i,j}_{v,\vec e}f_{v,w}(\zeta^i_v)\Big\}_{\vec e\in \vec I_w,\ j\in [s_e]}.$$
Since the number of candidate generators above equals $\sum_{u\in V(T)}s_u+\sum_{e\in E(T)}s_e=r$, it suffices to show that these vectors are linearly independent.


Consider the $r\times d$ matrix $\mathfrak C$ of coefficients whose rows are labeled by the set of candidate generators, namely, the set of tuples 
$$\{(u,j)\}_{u\in V(T),\ j\in[s_u]}\mathrm{\ and\ }\{(\vec e,j)\}_{\vec e\in \vec I_w,j\in [s_e]},$$
and columns labeled by the set of global basis vectors, namely the set of tuples $\{(v,i)\}_{v\in V(T),\ i\in [d_v]}$. Let $*$ represent either $u$ or $\vec e$. Then the entry of $\mathfrak C$ on the $(*,j)$-th row and $(v,i)$-th column is $a^{i,j}_{v,*}$ if $v\in A_*$, and 0 otherwise. It is enough to show that $\mathfrak C$ has a non-trivial $r\times r$ minor considered as a polynomial in $a^\bullet_\bullet$.

To see this, we associate each row $(*,j)$ a distinct global basis vector on $A_*$. This is possible inductively: suppose we have picked distinct global basis vectors for all $(u,j)$ where $u$ is in a subset $S\subset V(T)$ and $j\in [s_u]$, and all $(\vec e,j)$  such that $\vec e\in \vec I_w\cap \cev E_u=\cev E_u\backslash \cev e_u$ for some $u\in S$ and $j\in [s_e]$. We can further assume that $w\not\in S$ and the maximal subgraph $T_S$ of $T$ with vertices in $V(T)\backslash S$ is connected. If $T_S=\{w\}$ the inductive step is trivial since there are $d>r$ global basis vectors  on $A_w=V(T)$ to choose. In the following we assume $T_S\neq \{w\}$ and continue the process for a leaf $z\neq w$ of $T_S$ and all edges in $\cev E_z\backslash \cev e_z$ (see Figure \ref{fig:graphproof2}). 

\tikzset{every picture/.style={line width=0.75pt}}
\begin{figure}[ht]	
\begin{tikzpicture}[x=0.5pt,y=0.5pt,yscale=-0.9,xscale=0.9]
\import{./}{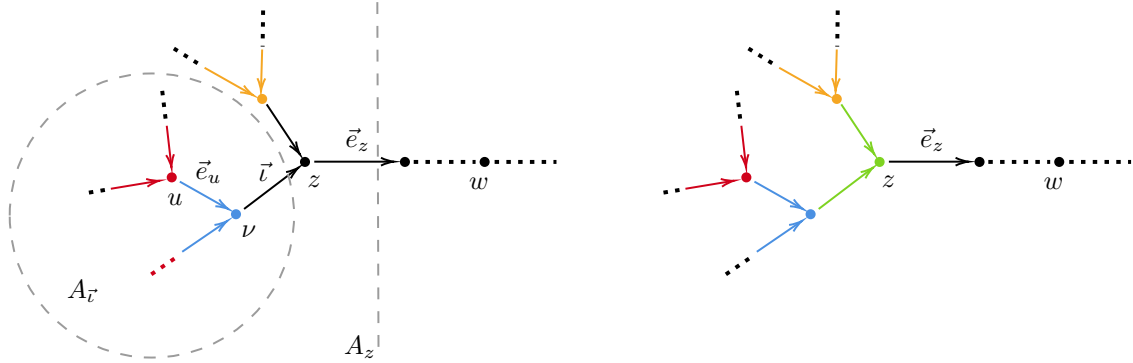}
\end{tikzpicture}		
\caption{The set of vertices and directed edges $*$ (the ones that are not black) such that the distinct global basis vectors are assigned for $(*,j)$ for all $j\in [s_*]$. The left part is before the inductive step and the right part is after the inductive step. Note that the set of edges with a same color (again, not black) is the set $\cev E_v\backslash \cev e_v$ for some vertex $v$.}
\label{fig:graphproof2}
\end{figure}

Given $\vec \iota\in \cev E_z\backslash \cev e_z$, denote $\nu=s(\vec \iota)$. By applying (\ref{eq:strata 0}) for $\vec e=\vec \iota$, we found that the number of global basis vectors on $A_{\vec \iota}$ that are not picked yet is  
$$\sum_{u\in A_{\vec \iota}}d_u-\sum_{u\in A_{\vec \iota}}(s_u+\sum_{\vec e\in \cev E_ u\backslash \cev e_ u}s_{e})=\sum_{u\in A_{\vec \iota}}d_u-\Big(s_\nu+\sum_{\vec e\in \vec E_\nu\backslash \vec\iota}(r-d_{\vec e})\Big)\geq r-d_{\cev \iota}-d_{\vec\iota} = s_{\iota}.$$
Hence we are able to pick distinct global basis vectors for  $(\vec \iota,j)$, where $j\in[s_\iota]$. Suppose now the global basis vectors are picked for all $\vec\iota\in\cev E_z\backslash \cev e_z$. 
Applying (\ref{eq:strata 0}) again for $\vec e=\vec e_z$ and $v=z$, the number of global basis vectors on $A_z$ that are not picked yet is $$\sum_{u\in A_z}d_u-\Big(\sum_{u\in A_z\backslash z}s_u+\sum_{u\in A_z}\sum_{\vec e\in \cev E_u\backslash \cev e_u}s_e\Big)=\sum_{u\in A_z}d_u-\sum_{\vec e\in\vec E_z\backslash \vec e_z}(r-d_{\vec e})\geq r-d_{\cev e_z}-\sum_{\vec e\in\vec E_z\backslash \vec e_z}(r-d_{\vec e}) \geq s_z.$$
Hence we are able to pick the distinct global basis vector for  $(z,j)$, where $j\in[s_z]$. This completes the induction.

Recall that we let $*$ represent either $u\in V(T)$ or $\vec e\in \vec I_w$. Suppose the distinct global basis vector associated to $(*,j)$ is the $i_{*,j}$-th global basis vector on $v_{*,j}$. Let $\mathfrak M$ be the $r\times r$ sub-matrix of $\mathfrak C$ whose columns are labeled by all $(v_{*,j},i_{*,j})$s. Then the determinant of $\mathfrak M$ contains the monomial term 
$$\prod_{u\in V(T),\ j\in [s_u]}a^{i_{u,j},j}_{v_{u,j},u} \prod_{\vec e\in \vec I_w, \  j\in [s_e]}a^{i_{\vec e,j},j}_{v_{\vec e,j},\vec e}.$$  
In particular, we have $\det\mathfrak M\neq 0$, hence $U_w$ has dimension $r$.

\textit{Step 2.2. It remains to show that $\dim f_{\vec \tau}(U_w)=d_{\vec \tau}$ for all $\vec \tau\in \vec E_w$.} Among all candidate generators of $U_w$, the ones with non-trivial image in $U_{t(\vec \tau)}$ are all possible $f_{u,w}(\xi^j_u)$s and $f_{u,w}(\xi^j_{\vec e})$s such that $u\in A_{\vec \tau}$ and $\vec e\in \cev E_u\backslash \cev e_u$ . The number of such candidate generators is
$$\sum_{u\in A_{\vec \tau}}(s_u+\sum_{\vec e\in \cev E_u\backslash \cev e_u}s_{e})=s_w+\sum_{\vec e\in \vec E_w\backslash \vec \tau}(r-d_{\vec e})=d_{\vec \tau}.$$
Moreover, the image of these candidate generators in $U_{t(\vec \tau)}$ gives rise to $d_{\vec \tau}$ candidate generators of $U_{t(\vec \tau)}$. By Step 2.1.2, these candidate generators in $U_{t(\vec \tau)}$ are linearly independent, hence $\dim f_{\vec \tau}(U_w)=d_{\vec \tau}$ and we are done.


(2) The irreducible components of $\Gr(\mathbf r,M_\Gamma)$ are of the form $\mathcal S^c_M$, where $M$ is maximal with respect to the preorder. Suppose $\mathcal S^c_N$ is an irreducible component of $\Gr(\mathbf r,M_\Gamma)$. If there is a direct summand $R_{\vec e}$ in the decomposition of $N$, then by Proposition \ref{prop:indecomp of loc linear indep} (3), there must also be an $R_{\cev e}$. Hence $\mathcal S_N$ is contained in the closure of another stratum in $\Gr(\mathbf r,M_\Gamma)$ by Proposition \ref{prop:linked degeneration stratification}, which provides a contradiction. 
Therefore $N$ must be projective. On the other hand, if $N$ is not maximal, then by the proof of Proposition \ref{prop:linked degeneration stratification} (2), $N$ must contain an $R_{\vec e}$ in its decomposition, hence can not be projective. Therefore, the irreducible components of $\Gr(\mathbf r,M_\Gamma)$ are parametrized by all isomorphic classes of projective subrepresentations of $M_\Gamma$.

We now write $N:=(V_v)_v\cong \oplus P^{r_v}_v$. Then (\ref{eq:strata 1}) is equivalent to (\ref{eq:strata 0}) since $d_{\vec e}:=\dim f_{\vec e}(V_{s(\vec e)})=\sum_{v\in A_{\vec e}}r_v$. 
Moreover, By Lemma \ref{lem:hom_dim} and the dimension formula Proposition \ref{prop:quiver stratification}, we have 
$$\dim\mathcal S_N=\dim\mathrm{Hom}(\oplus P^{r_v}_v,\oplus P^{d_v}_v)-\dim \mathrm{End}(\oplus P^{r_v}_v)=rd-r^2=r(d-r).$$
Thus $\Gr(\mathbf r,M_\Gamma)$ has pure dimension $r(d-r)$.
\end{proof}
\begin{rem}
(1) The idea in Theorem \ref{thm:linked degeneration stratification} can also be used to describe the geometry of $\Gr({\underline x},M_\Gamma)$ for general ${\underline x}$. Although we may not have $r_{\vec e}=r_{\cev e}$ as in Proposition \ref{prop:indecomp of loc linear indep} (3) for decompositions of dimension-${\underline x}$ subrepresentations of $M_\Gamma$ (or, as in the proof of Theorem \ref{thm:linked degeneration stratification}, $s_{\vec e}=s_{\cev e}$), one can still compute all possible strata and hence describe its irreducible components.

(2) We can also describe the intersection of irreducible components of $\Gr(\mathbf r,M_\Gamma)$ (codimensions, strata in the intersection, etc.), as well as count the number of irreducible components with the information in Theorem \ref{thm:linked degeneration stratification}, which will extend the results for $r=1$ in \cite{cartwright2011mustafin}. See also Example \ref{ex:r=1} below. Moreover, as we will see later in Theorem \ref{thm:flatness},  $\Gr(\mathbf r,M_\Gamma)$ is isomorphic to the special fiber of the Mustafin degeneration $M_r(\Gamma)$. Hence it is also possible to classify all primary/secondary components of the special fiber of $M_r(\Gamma)$ (\cite[Definition 3.2]{habich2014mustafin}). 

We leave all the details to the interested readers. 
\end{rem}
\begin{ex}\label{ex:olink_irr}
Let $\Gamma=\{[L_1],[L_2]\}$ be a two-point configuration. The irreducible components of $\Gr(\mathbf r,M_\Gamma)=LG_r(\Gamma)_0$ correspond to non-negative numbers $r_1,r_2$ such that $r_1+r_2=r$ and $r_i\leq d_i$ for $i=1,2$, where $d_1=\mathrm{rank}(f_{1,2})$ and $d_2=\mathrm{rank}(f_{2,1})$ are positive intergers such that $d_1+d_2=d$. This is exactly the description carried out in \cite[Example A.17]{Olls}; recall from Proposition \ref{prop:linked grass N linked chain} (2) that an Osserman's linked Grassmannian over $\kappa$ is isomorphic to $LG_r(\Gamma)_0$ for a convex chain $\Gamma$. Note that we completely described the points in each component of $\Gr(\mathbf r,M_\Gamma)$ while  \cite[Example A.17]{Olls} only identified the irreducible components. Moreover, our conclusion of Theorem \ref{thm:linked degeneration stratification} (2) for convex chains completely answers Question A.19 of \textit{loc.cit.}
\end{ex}

\begin{ex}\label{ex:r=1}
Note that by Proposition \ref{prop:indecomp of loc linear indep} (2), $d_v>0$ if $v$ is a leaf of $T$. Suppose $r=1$, then the set of irreducible components of $\Gr(\mathbf r,M_\Gamma)$ is identified with $V(T)$: for each $v\in V(T)$, $Z_v:=\mathcal S^c_{P_v}$ gives an irreducible component and vice versa. Moreover, for each $v$, the strata contained in $Z_v$ are exactly all $\mathcal S_{R_e}$ where $e$ is an edge containing $v$ and $R_e=R_{\vec e}\op R_{\cev e}$. As a result, $Z_v\cap Z_{v'}$ is non-empty if and only if $v$ is adjacent to $v'$, in which case the intersection is $\mathcal S_{R_e}$ where $e$ is the edge connecting $v$ and $v'$. This agrees with the results in \cite[\S 2]{cartwright2011mustafin}.
\end{ex}

As a consequence of Theorem \ref{thm:linked degeneration stratification}, we now prove the  theorem of the global geometry of a linked  Grassmanian associated to a locally linearly independent configuration $\Gamma$. Note that the case when $\Gamma$ is a convex chain is proved in \cite{helm2008flatness} via a local computation.



\begin{thm}\label{thm:flatness}
Let $\Gamma$ be a locally linearly independent configuration. Then $LG_r(\Gamma)$ is irreducible and flat over $R$. Moreover, both $LG_r(\Gamma)$ and its special fiber $LG_r(\Gamma)_0=\Gr(\mathbf r, M_\Gamma)$ are reduced and Cohen-Macaulay. As a result, $LG_r(\Gamma)=M_r(\Gamma)$ as a scheme.
\end{thm}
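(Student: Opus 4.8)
## Proof proposal for Theorem \ref{thm:flatness}

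The plan is to deduce everything from the structural results on the special fiber $LG_r(\Gamma)_0 = \Gr(\mathbf r, M_\Gamma)$ established in Theorem \ref{thm:linked degeneration stratification}, together with a dimension count and standard commutative-algebra facts. First I would recall that $LG_r(\Gamma)$ is projective over $R$ (Proposition \ref{prop:mustafin degeneration functor representable}) with generic fiber the Grassmannian $\Gr(r, V)$, which is smooth and irreducible of dimension $r(d-r)$. The key input from Theorem \ref{thm:linked degeneration stratification} is that the special fiber $\Gr(\mathbf r, M_\Gamma)$ has pure dimension $r(d-r)$, equal to the dimension of the generic fiber. Since $R$ is a DVR and $LG_r(\Gamma)$ is $R$-flat if and only if it has no embedded or isolated associated points supported on the special fiber, I would argue as follows: the special fiber being equidimensional of the expected dimension $r(d-r)$, and every irreducible component of $LG_r(\Gamma)$ dominating $\spec R$ meeting the special fiber in something of dimension $\geq r(d-r)$ (by upper semicontinuity of fiber dimension and the fact that $LG_r(\Gamma)$ is projective hence proper over $R$), one sees there can be no component of $LG_r(\Gamma)$ contained in the special fiber (such a component would force some irreducible component of $LG_r(\Gamma)_0$ to have dimension $> r(d-r)$, contradicting purity, or it would be a component of dimension exactly $r(d-r)$ lying over the closed point, which can be excluded because the scheme-theoretic closure of the generic fiber already accounts for a reduced component of that dimension in the special fiber — I will need to be slightly careful here and I expect this to be the main technical point).

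Next, for irreducibility: the generic fiber $\Gr(r,V)$ is irreducible, so its scheme-theoretic closure $\overline{\Gr(r,V)}$ inside $LG_r(\Gamma)$ is an irreducible closed subscheme flat over $R$, and this closure is precisely the Mustafin degeneration $M_r(\Gamma)$ by definition. By the flatness argument above, $LG_r(\Gamma)$ has no components other than those dominating $\spec R$; but any component dominating $\spec R$ must contain the generic fiber's closure and, since the generic fiber is irreducible, there is exactly one such component, namely $M_r(\Gamma)$. Hence $LG_r(\Gamma) = M_r(\Gamma)$ at the level of underlying topological spaces, and $LG_r(\Gamma)$ is irreducible. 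To upgrade this to an equality of schemes, I would use reducedness: $M_r(\Gamma)$ is reduced by construction (it is a scheme-theoretic image of a reduced scheme — the Grassmannian over $K$ — but one must check reducedness is preserved, which holds because the closure of a reduced subscheme is reduced), and $LG_r(\Gamma)$ is reduced once we know its special fiber is reduced and it is $R$-flat (flatness plus reduced fibers over a reduced base gives a reduced total space, via the standard criterion that a flat scheme over a DVR with reduced special and generic fibers is reduced).

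For reducedness of the special fiber $\Gr(\mathbf r, M_\Gamma)$: here I would invoke the stratification from Theorem \ref{thm:linked degeneration stratification} and Proposition \ref{prop:quiver stratification}. Each stratum $\mathcal S_M$ is locally closed and irreducible, and the open dense strata are the $\mathcal S_N$ for $N$ projective, which are smooth (being orbits, or at least one can check smoothness of the generic point of each irreducible component directly from the quiver-Grassmannian orbit description). Since the irreducible components $\mathcal S_N^c$ all have the same dimension $r(d-r)$ and the scheme is generically reduced along each component, reducedness of $\Gr(\mathbf r, M_\Gamma)$ follows provided it satisfies Serre's condition $S_1$ — which will come from Cohen-Macaulayness. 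For Cohen-Macaulayness: I would follow the strategy indicated in the paper, namely a similar argument to the case of Osserman's linked Grassmannian in \cite{helm2008flatness}, reducing via the double-tree structure and Lemma \ref{lem:locally linearly independent underlying graph} (4) to the convex-chain case, where Cohen-Macaulayness is known; alternatively one can exhibit $LG_r(\Gamma)$ locally as a complete intersection of the expected codimension inside a product of flag schemes, using the equational description from Proposition \ref{prop:mustafin degeneration functor representable} and the dimension count. Once Cohen-Macaulay is in hand, $S_1$ and hence reducedness of both the special fiber and the total space follow, completing the proof that $LG_r(\Gamma) = M_r(\Gamma)$ as schemes. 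The main obstacle I anticipate is the Cohen-Macaulayness: the dimension bookkeeping for the complete-intersection presentation must be done carefully, and the inductive reduction to convex chains requires verifying that deleting a leaf of the tree $T$ behaves well with respect to the linked-Grassmannian structure, which is plausible given Lemma \ref{lem:locally linearly independent underlying graph} but needs care.
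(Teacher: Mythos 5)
Your overall architecture agrees with the paper's: purity of the special fiber from Theorem \ref{thm:linked degeneration stratification}, Cohen--Macaulayness by induction on a leaf of the tree (with the convex-chain case of \cite{helm2008flatness} as base case, realizing $\Gr(\mathbf r,M_\Gamma)$ as a local complete intersection in $\Gr(\mathbf r,M_{\Gamma'})\times\Gr(\mathbf r,M_{\Gamma''})$), generic reducedness plus Cohen--Macaulayness giving reducedness, and then flatness and the identification with $M_r(\Gamma)$. However, there is a genuine gap at the step you yourself flag as the main technical point, and it is not fixable by the route you suggest. Your claim that a component of $LG_r(\Gamma)$ contained in the special fiber ``would force some irreducible component of $LG_r(\Gamma)_0$ to have dimension $>r(d-r)$'' is false: such a vertical component would simply be one of the irreducible components $\mathcal S^c_N$ of $LG_r(\Gamma)_0$, of dimension exactly $r(d-r)$, and Theorem \ref{thm:linked degeneration stratification}(2) shows there are in general many such components (one per isomorphism class of projective subrepresentation $N$ of dimension $\mathbf r$). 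Equidimensionality cannot distinguish the components of $LG_r(\Gamma)_0$ lying in the closure of the generic fiber from those that do not; your fallback that the closure of the generic fiber ``already accounts for a reduced component of that dimension'' only guarantees it contains \emph{at least one} of them. What is actually needed is that \emph{every} $\mathcal S^c_N$ is dominated by the generic fiber, and this is precisely what the paper's irreducibility argument supplies: by Proposition \ref{prop:indecomp of loc linear indep}(3) every $\mathbf r$-dimensional subrepresentation of $M_\Gamma$ splits as a direct sum of dimension-$\mathbf 1$ subrepresentations, so the direct-sum map $\prod_{i=1}^r LG_1(\Gamma)\dashrightarrow LG_r(\Gamma)$ is a dominant rational map from an irreducible scheme (irreducibility of $LG_1(\Gamma)$ being Faltings' Theorem \ref{thm:Faltings}); irreducibility of $LG_r(\Gamma)$ follows at once, and then flatness and reducedness of the total space are deduced from reducedness of the special fiber via \cite[Lemma 6.13]{Olls} rather than from a fiber-dimension argument.

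A second, smaller gap: your justification of generic reducedness of $\Gr(\mathbf r,M_\Gamma)$ by saying the open strata are smooth ``being orbits'' does not work as stated. The strata $\mathcal S_N$ are locally closed subsets of the underlying topological space; they are not group orbits inside the quiver Grassmannian, and Proposition \ref{prop:quiver stratification} gives no control over the scheme structure of $\Gr(\mathbf r,M_\Gamma)$ (which is cut out by linearity conditions in a product of Grassmannians) at their points. The paper's route is to note that the points of the dense open strata are exactly the simple points of $\Gr(\mathbf r,M_\Gamma)$ viewed as a pre-linked Grassmannian (Remark \ref{rem:prelinked 2}), and these are smooth points by \cite[Proposition A.2.2]{Ohrk}; that is the scheme-theoretic input you need before combining generic reducedness with Cohen--Macaulayness.
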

\begin{proof}
By Proposition \ref{prop:indecomp of loc linear indep} (3), we have a rational and dominant morphism  $$\prod_{1\leq i\leq r}LG_1(\Gamma)\dashrightarrow LG_r(\Gamma)$$ induced by taking the direct sum of the dimension-$1$ subspaces, where the product on the left is over $R$. Since $LG_1(\Gamma)$ is irreducible by Theorem \ref{thm:Faltings}, so is the product. Hence $LG_r(\Gamma)$ is irreducible.

The rest of the proof is similar to \cite[Theorem 4.1]{helm2008flatness}. We first show the Cohen-Macaulayness of $\Gr(\mathbf r,M_\Gamma)$ by induction on $|\Gamma|$. The base case $|\Gamma|=1$ trivial since $LG_r(\Gamma)$ is a Grassmannian, and the case $|\Gamma|=2$ is covered in \cite[Theorem 4.1]{helm2008flatness}. We now assume $|\Gamma|\geq 3$ and let $[L]$ be a leaf of $T$ and $\Gamma'=\Gamma\backslash\{[L]\}$. Let $[L']$ be the lattice class adjacent to $[L]$ in $\Gamma$ and $\Gamma''=\{[L],[L']\}$. According to Theorem \ref{thm:linked degeneration stratification} (2), $\Gr(\mathbf r,M_\Gamma)$ has pure dimension $r(d-r)$, and so are $\Gr(\mathbf r,M_{\Gamma'})$ and $\Gr(\mathbf r, M_{\Gamma''})$. It follows that $\Gr(\mathbf r, M_\Gamma)$ is a local complete intersection in $\Gr(\mathbf r,M_{\Gamma'})\times \Gr(\mathbf r,M_{\Gamma''})$. By the inductive hypothesis and \cite[\href{https://stacks.math.columbia.edu/tag/045Q}{Tag 045Q}]{stacks-project},  $\Gr(\mathbf r,M_{\Gamma'})\times \Gr(\mathbf r, M_{\Gamma''})$ is Cohen-Macaulay, hence so is $\Gr(\mathbf r,M_{\Gamma})$. 

By Remark \ref{rem:prelinked 2} and Theorem \ref{thm:linked degeneration stratification} (2), the simple points of $\Gr(\mathbf r,M_\Gamma)$ as a prelinked Grassmannian are dense. Thus, according to \cite[Proposition A.2.2]{Ohrk}, $\Gr(\mathbf r,M_\Gamma)$ is generically smooth and hence 
generically reduced. Therefore, $\Gr(\mathbf r,M_\Gamma)$ is reduced by Cohen-Macaulayness. The reducedness and flatness of $LG_r(\Gamma)$ now follows from the irreducibility and \cite[Lemma 6.13]{Olls}. The Cohen-Macaulayness of $LG_r(\Gamma)$ is a consequence of \cite[Cor., page 181]{matsumura_1987}.
\end{proof}

Note that we proved in addition to the main theorem of \cite{habich2014mustafin} that the Mustafin degeneration of a locally linearly independent configuration is Cohen-Macaulay with reduced special fiber.

\section{Application to Limit Linear Series}\label{sec:connection to lls}
In this section we investigate the connection between linked  Grassmannians and moduli spaces of limit linear series on nodal curves. As we shall see, the moduli space of limit linear series admits a natural map from the space of \textit{linked linear series} (Definition \ref{defn:linked linear series}), which, up to twisting by an sufficient ample line bundle, can be written as an union of linked  Grassmannians. Consequently, we derive a criterion for the smoothing of limit linear series.  

Throughout this section we assume that $R$ is a complete discrete valuation ring with fraction field $K$ and algebraically closed residue field $\kappa$. All curves we consider are assumed proper, (geometrically) reduced and connected, and at worst nodal. Furthermore, all irreducible components of a curve are smooth.

\addtocontents{toc}{\protect\setcounter{tocdepth}{2}}
\subsection{Definition of limit linear series}\label{subsec: limit linear series}
We recall the notion of limit linear series on nodal curves. We will use Osserman's notion and focus on curves with \textit{trivial chain structure}, which is much easier to phrase than the non-trivial case. See the precise definition of the later in \cite{osserman2019limit}. 
Meanwhile, we would like to mention that, when dealing with degeneration of linear series, it is possible to replace limit linear series of non-trivial chain structures (when they appear) with the ones of trivial chain structures; however, the underlying curve will be more complicated: it is obtained from the curve of the former by inserting chains of rational curves.

Unless otherwise stated, all definitions in this subsection are from \cite{osserman2019limit}. Let $X_0$ be a nodal curve over $\kappa$. Let $G$ be the dual graph of $X_0$ and $Z_v$ the irreducible component of $X_0$ corresponding to $v\in V(G)$. Let $Z^c_v$ be the closure of $X_0\backslash Z_v$.

The set of multidegrees on $X_0$ is in one-to-one correspondence with the set of divisors on $G$ in a natural way. We say that a multidegree $w$ is obtained from $w'$ by a \textit{twist} at $v\in V(G)$ if the divisor associated to $w$ is obtained from $w'$ as follows: if $v'$ is adjacent to $v$, we increase the degree of $w'$ at $v'$ by one; we decrease the degree of $w'$ at $v$ by the number of vertices adjacent to $v$. In this case we also say that $w'$ is obtained from $w$ by a \textit{negative twist} at $v$.

\begin{defn}\label{defn:concentrated multidegree}
A multidegree $w$ is \textbf{concentrated} on $v$ if there is an ordering on $V(G)$ starting at $v$, and such that for each subsequent vertex $v'$, we have that $w$ becomes negative in vertex $v'$ after taking the composition of the negative twists at all previous vertices. 
\end{defn}

We relate the combinatorial notions to algebraic operations, starting from enriched structures.

\begin{defn}\label{enriched structure}
An \textbf{enriched structure} on a nodal curve $X_0$ consists of the data, for each $v\in V(G)$, of a line bundle $\mathscr O_v$ on $X_0$ and a section $s_v\in \Gamma (X_0,\mathscr O_v)$, satisfying: 

(1) for any $v\in V(G)$ we have $\mathscr O_v|_{Z_v}\cong \mathscr O_{Z_v}(-(Z_v^c\cap Z_v))$ and $ \mathscr O_v|_{Z_v^c}\cong \mathscr O_{Z_v^c}(Z_v^c\cap Z_v)$; 

(2) $\bigotimes_{v\in V(G)}\mathscr O_v\cong \mathscr O_{X_0}$.

(3) $s_v$ vanishes precisely along $Z_v$.
\end{defn}

Now let $(\mathscr O_v,s_v)_{v\in V(G)}$ be an enriched structure on $X_0$. 

\begin{nt}\label{nt:twisting graph}
Fix a multidegree $w_0$ on $X_0$. Let $G(w_0)$ be the directed graph with vertex set 
$V(G(w_0))\subset \mathbb Z^{V(G)}$
 consisting of all multidegrees obtained from $w_0$ by a sequence of twists, and an edge from $w$ to $w'$ if $w'$ is obtained from $w$ by twisting at any vertex of $G$.
Given $w,w'\in V(G(w_0))$, let $P=(w;v_1,...,v_m)$ be a minimal path from $w$ to $w'$ in $G(w_0)$, where the vertex $v_i$ indicates the edge in $G(w_0)$ corresponding to twisting at $v_i$, we set $$\mathscr O_{w,w'}=\bigotimes_{i=1}^m\mathscr O_{v_i}\mathrm{\ and\ } s_{w,w'}=\bigotimes_{i=1}^m s_{v_i}. $$
\end{nt}  

The following proposition ensures that the notations $\mathscr O_{w,w'}$ and $s_{w,w'}$ are well-defined.

\begin{prop}\label{prop:twisting minimal path}\cite[Proposition 2.12]{osserman2019limit}
In the minimal path $P(w;v_1,...,v_m)$ from $w$ to $w'$, the number $m$ and vertices $v_i$ are uniquely determined up to reordering. More generally, paths $P(w,v'_1,...,v'_{m'})$ and $P(w,v''_1,...,v''_{m''})$ starting from $w$ have the same endpoint if and only if the multisets of the $v'_i$ and $v''_i$ differ by a multiple of $V(G)$.
\end{prop}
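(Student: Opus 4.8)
The plan is to encode twisting as an operation on $\ZZ^{V(G)}$ given by the graph Laplacian, and then reduce both assertions to the one elementary fact that a connected graph has a one–dimensional Laplacian kernel. Let $L = L_G$ be the Laplacian of $G$, normalized to match the definition of twist given above: the column $Le_v$ has $v$-entry equal to the number of vertices adjacent to $v$ and, for each vertex $v'$ adjacent to $v$, $v'$-entry $-1$. Reading a multidegree as an element of $\ZZ^{V(G)}$, a single twist at $v$ is then exactly $w \mapsto w - Le_v$. Consequently a path $P(w;v_1,\dots,v_m)$ in $G(w_0)$ carries $w$ to $w - L\mathbf n$, where $\mathbf n = \sum_{i=1}^m e_{v_i} \in \ZZ_{\geq 0}^{V(G)}$ is the multiplicity vector of the multiset $\{v_1,\dots,v_m\}$; in particular the endpoint depends only on this multiset. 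Since every row of $L$ sums to $0$, the all-ones vector $\mathbf 1 = \mathbf 1_{V(G)}$ lies in $\ker L$, so adding $c$ copies of $V(G)$ to the multiset changes $\mathbf n$ by $c\mathbf 1$ and does not move the endpoint — this is the "if" direction of the second assertion. It also shows $G(w_0)$ is strongly connected (twisting once at every vertex is the identity, so one may add copies of $\mathbf 1$ to make any integral $\mathbf n$ nonnegative), so any two vertices $w,w'$ of $G(w_0)$ are joined by a directed path, and $w - w' \in L\cdot \ZZ_{\geq 0}^{V(G)}$.

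For the converse, suppose $P(w;v_1',\dots,v_{m'}')$ and $P(w;v_1'',\dots,v_{m''}'')$ have the same endpoint; with $\mathbf n',\mathbf n''$ the corresponding multiplicity vectors this says $L(\mathbf n' - \mathbf n'') = 0$. Here I would invoke the standard computation that $x^{\top} L x \geq 0$ for all $x$, with equality if and only if $x$ is constant on each connected component of $G$; since $G$ is connected this gives $\ker L = \RR\cdot\mathbf 1$, and, both vectors being integral, $\mathbf n' - \mathbf n'' = c\,\mathbf 1$ for some $c \in \ZZ$. That is precisely the statement that the two multisets differ by $c$ copies of $V(G)$, which settles the second assertion.

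Finally, the first assertion follows by a minimality argument. A minimal path from $w$ to $w'$ corresponds to a multiplicity vector $\mathbf n \geq 0$ with $L\mathbf n = w - w'$ and $|\mathbf n| := \sum_v n_v$ minimal (such exist by the previous paragraph, and $|\mathbf n|$ is bounded below by $0$). If $\mathbf n$ and $\tilde{\mathbf n}$ are two such minimizers, then $\mathbf n - \tilde{\mathbf n} = c\,\mathbf 1$; if $c \neq 0$, say $c \geq 1$ after swapping, then $\tilde{\mathbf n} = \mathbf n - c\,\mathbf 1 \geq 0$ is still a nonnegative solution but with $|\tilde{\mathbf n}| = |\mathbf n| - c\,|V(G)| < |\mathbf n|$, contradicting minimality. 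Hence $\mathbf n = \tilde{\mathbf n}$: the multiset $\{v_1,\dots,v_m\}$, and in particular $m = |\mathbf n|$, is uniquely determined, and conversely any ordering of this multiset yields a path of length $m$ in $G(w_0)$ with the same endpoint, hence again minimal — which is the meaning of "uniquely determined up to reordering." As an immediate corollary, $\mathscr O_{w,w'} = \bigotimes_i \mathscr O_{v_i}$ and $s_{w,w'} = \bigotimes_i s_{v_i}$ are well defined, since a tensor product is insensitive to the order of its factors. The only step requiring genuine care is the linear algebra of $\ker L$ on a connected (possibly multi-)graph; everything else is bookkeeping, though one must make sure the edge/degree conventions entering $L$ agree with those in the definition of twist so that "a twist is subtraction of a Laplacian column" holds on the nose.
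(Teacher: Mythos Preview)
Your argument is correct: encoding a twist as subtraction of a Laplacian column and invoking $\ker L_G = \RR\cdot\mathbf 1$ for connected $G$ is exactly the right tool, and the minimality step is clean. Note that the paper itself gives no proof of this proposition---it merely cites \cite[Proposition 2.12]{osserman2019limit}---so there is nothing to compare your approach against here; your write-up would serve perfectly well as a self-contained proof, with the caveat (which you already flag) that for a multigraph $G$ the entries of $L$ must count edge multiplicities rather than adjacent vertices, matching the line-bundle interpretation of twisting.
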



\begin{nt}\label{nt:twisting map}
Suppose $\mathscr L$ is a line bundle on $ X_0$ of multidegree $w_0$. For any $w\in V(G(w_0))$ set $\mathscr L_w=\mathscr L\otimes \mathscr O_{w_0,w}$. Take also $w'\in V(G(w_0))$. Let $P=(v_1,...,v_m)$ be a minimal path from $w$ to $w'$ as in Notation \ref{nt:twisting graph}. We have a natural map $f_{w,w'}\colon \mathscr L_{w}\rightarrow \mathscr L_{w'}$ induced by multiplying with $s_{w,w'}$. 
\end{nt}  
 
We now have all the ingredients to define limit linear series.

\begin{defn}\label{defn:limit linear series}
Let $X_0$ and $G$ be as above. Fix a multidegree $w_0$ with total degree $d$, and fix a number $r<d$. Choose an enriched structure $(\mathscr O_v,s_v)_v$ on $X_0$, and a tuple $(w_v)_{v\in V(G)}\subset V(G(w_0))$ of multidegrees on $X_0$ such that $w_v$ is concentrated on $v$. Let $\ov G(w_0)$ be the subgraph of $G(w_0)$ consisting of multidegrees $w$ in $V(G(w_0))$ such that, for all $v\in V(G)$, $w_v$ can be obtained from $w$ by twisting vertices other than $v$. A \textit{limit linear series} on $X_0$ consists of a line bundle $\mathscr L$ of multidegree $w_0$ on $X_0$ together with subspaces $V_v\subset \Gamma(X_0,\mathscr L_{w_v})$ of dimension $(r+1)$  such that for all $w\in V(\overline G(w_0))$, the kernel of the linear map 
\begin{equation}\label{eq:limit linear series}  \Gamma(X_0,\mathscr L_w)\to\displaystyle\bigoplus_{v\in V(G)}\Gamma (X_0,\mathscr L_{w_v})/V_v 
\end{equation}
induced by $\op_vf_{w,w_v}$  has dimension at least $r+1$.
\end{defn}

According to \cite[Corollary 2.23]{osserman2017limit} and \cite[Proposition 3.8]{osserman2019limit}, the definition of limit linear series above is equivalent to the one defined in \cite{osserman2019limit}, which is independent of the choice of multidegrees $(w_v)_v$. 

We next introduce the notion of linked linear series, which is closely related to limit linear series.

\begin{defn}{\cite{Ohrk}}\label{defn:linked linear series}
Use the same notation as in Definition \ref{defn:limit linear series}. A \textit{linked linear series} on $X_0$ consists of a line bundle $\mathscr L$ on $X_0$ of multidegree $w_0$ together with subspaces $V_w\subset H^0(X_0,\mathscr L_w)$ of dimension $(r+1)$ for all $w\in V(\overline G(w_0))$ such that 
\begin{equation}\label{eq:linked linear series}
   f_{w,w'}(V_w)\subset V_{w'}\mathrm{\ for\ all\ } w,w'\in V(\overline G(w_0)). 
\end{equation}
\end{defn}

\begin{rem}\label{rem:linked to limit}
Suppose $w_v\in V(\ov G(w_0))$ for all $v$, which is possible according to Remark \ref{rem:equally tropical convex} later. Given a linked linear series $(V_w)_{w\in V(\overline G(w_0))}$, we get immediately a limit linear series by setting $(V_v)_{v\in V(G)}=(V_{w_v})_{v\in V(G)}$. This actually gives a forgetful map from the moduli space of linked linear series to the moduli space of limit linear series.
\end{rem}

By convention, we also denote a limit/linked linear series by a limit/linked $\mathfrak g^r_d$ when the degree and rank are specified.

\subsection{Tropical convexity of the set of multidegrees of limit linear series}\label{subsec:tropical complexity of vgw0}
Let us label the vertex of $G$ as $v_0,v_1,...,v_n$. Recall from \cite{develin2004tropical} that we have the tropical projective space $\mathbf{TP}^n:=\mathbb R^{n+1}/\mathbb R\cdot\mathbf 1$, and a subset $S$ of $\mathbf{TP}^n$ is \textit{tropically convex} if for any $(x_0,...,x_n)$ and $(x'_0,...,x'_n)$ in $S$, we have $$(\min(a+x_0,b+x'_0),...,\min(a+x_n,b+x'_n)\in S
\mathrm{\ for\ all\ }a,b\in\mathbb R.$$

We can identify $V(G(w_0))$ with the integral points in $\mathbf{TP}^n$ as follows.  If $w\in V(G(w_0))$ is obtained from $w_0$ by subsequently twisting $x_{w,j}$ times at $v_j$, then $w$ is identified with $(x_{w,0},...,x_{w,n})$. This is well-defined by Proposition \ref{prop:twisting minimal path}.

\begin{defn}\label{defn:integral tropical convexity}
We call a set $S$ of lattice points in $\mathbf{TP}^n$ \textit{integrally tropically convex} if it is the set of all lattice points in a tropically convex set. The \textit{integral tropical convex hull} of a lattice set $S$ is the smallest integrally tropically convex set that contains $S$.
\end{defn}

It is straightforward to verify that integral tropical convexity of a subset of $V(G(w_0))$ is independent of the choice of $w_0$. 

\begin{nt}\label{nt:twist coefficients}
For $0\leq i\leq n$, suppose  $w_{v_i}$ is obtained from $w_0$ by twisting $a_{i,j}\geq 0$ times at $v_j$ successively for $0\leq j\leq n$. Then $w_{v_i}=(a_{i,0},...,a_{i,n})$. Let $V(G)^{\mathrm{conv}}\subset \mathbf{TP}^n$ be the integral tropical convex hull of all $w_v$.
\end{nt}

\begin{prop}\label{prop:tropical convexity of vgw0}
$V(\overline G(w_0))$ is integrally tropically convex. Moreover, we have $V(\overline G(w_0))\subset V(G)^{\mathrm{conv}}$, and $V(\overline G(w_0))= V(G)^{\mathrm{conv}}$ if and only if
\begin{equation} \label{eq:moduli of lls and linked flags 1}
a_{k,i}-a_{k,j}\geq a_{i,i}-a_{i,j} \mathrm{\ for\ all\ } 0\leq i,j,k\leq n
.\end{equation}
\end{prop}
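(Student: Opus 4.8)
The plan is to unwind both sides of the claimed equality into explicit combinatorial conditions on the twisting data, and then to match them up. Recall that a multidegree $w\in V(G(w_0))$, identified with $(x_{w,0},\dots,x_{w,n})\in\mathbf{TP}^n$, lies in $V(\overline G(w_0))$ precisely when, for every $i$, the multidegree $w_{v_i}$ can be obtained from $w$ by twisting only at vertices other than $v_i$; equivalently, $x_{w,j}\le a_{i,j}$ for all $j\neq i$ (with the normalization on $\mathbf{TP}^n$ fixed, say, by setting the $i$-th coordinate appropriately). First I would record this inequality description carefully, using Proposition \ref{prop:twisting minimal path} to guarantee well-definedness of the coordinates, and translate ``$w_v$ obtained from $w$ by twisting at vertices other than $v$'' into the coordinatewise bound above; this is the bookkeeping heart of the argument.

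Next I would prove integral tropical convexity of $V(\overline G(w_0))$ directly from this description: if $w,w'\in V(\overline G(w_0))$ and $a,b\in\RR$, one checks that the tropical combination $(\min(a+x_{w,j},b+x_{w',j}))_j$ still satisfies the defining inequalities, because the minimum of two quantities each bounded above by $a_{i,j}$ (after the appropriate shift) is again so bounded — here one has to be slightly careful about the projective normalization, but the set of lattice points of the corresponding tropically convex polyhedron is exactly $V(\overline G(w_0))$. For the inclusion $V(\overline G(w_0))\subset V(G)^{\mathrm{conv}}$, I would note that each generator $w_{v_i}$ lies in $V(\overline G(w_0))$ (it is concentrated on $v_i$, hence in particular can be reached from itself by twisting at vertices other than $v_i$, and the concentration hypothesis together with the definition of $\overline G(w_0)$ puts it in the subgraph), and then use that $V(\overline G(w_0))$ is integrally tropically convex and contains all the $w_v$, so it contains their integral tropical convex hull $V(G)^{\mathrm{conv}}$ — wait, that gives the reverse inclusion; so instead for $V(\overline G(w_0))\subset V(G)^{\mathrm{conv}}$ I would show every $w\in V(\overline G(w_0))$ is an explicit tropical combination of the $w_{v_i}$, namely $w=\bigoplus_i (c_i\odot w_{v_i})$ for suitable constants $c_i$ read off from the coordinates of $w$, using that $x_{w,j}\le a_{i,j}$ for $j\ne i$ forces the tropical min over $i$ to reproduce $x_{w,j}$.

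Finally, for the equality criterion, I would compute $V(G)^{\mathrm{conv}}$ explicitly as the lattice points of the tropical convex hull of the $w_{v_i}$: a lattice point $x$ lies there iff $x=\bigoplus_i(c_i\odot w_{v_i})$ for some $c_i\in\RR$, and one reduces to the combinatorial question of which coordinatewise-minimum patterns are achievable. Comparing with the inequality description of $V(\overline G(w_0))$, the two sets coincide for every choice of $w_0$ (equivalently for every choice of the $w_{v_i}$ consistent with the concentration hypothesis, but here the $w_{v_i}$ are fixed) exactly when the ``diagonal'' generator dominates appropriately — precisely the condition $a_{k,i}-a_{k,j}\ge a_{i,i}-a_{i,j}$ for all $i,j,k$, which says that in the tropical combination the term coming from $w_{v_k}$ never spuriously beats the term from $w_{v_i}$ in coordinate $i$. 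I expect the main obstacle to be the direction showing that \eqref{eq:moduli of lls and linked flags 1} is \emph{necessary}: one must exhibit, when the inequality fails for some triple $(i,j,k)$, a lattice point in $V(G)^{\mathrm{conv}}$ that violates one of the defining inequalities of $V(\overline G(w_0))$, which requires a careful choice of the constants $c_i$ to push a single coordinate out of range while keeping the rest admissible; getting the projective normalization and the integrality of this witness point right is the delicate part. The sufficiency direction and the convexity statements should be comparatively routine once the inequality dictionary is set up.
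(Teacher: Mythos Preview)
Your overall strategy matches the paper's: characterize $V(\overline G(w_0))$ by inequalities, read off tropical convexity, exhibit each element as an explicit tropical combination of the $w_{v_i}$ to get the inclusion into $V(G)^{\mathrm{conv}}$, and then handle the equality criterion. Two points are worth correcting.

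First, write the membership condition in the invariant form $x_{w,i}-x_{w,j}\ge a_{i,i}-a_{i,j}$ for all $i,j$ (this is what comes out of Proposition~\ref{prop:twisting minimal path}); your version ``$x_{w,j}\le a_{i,j}$ for $j\ne i$ after normalizing the $i$-th coordinate'' forces you to juggle a different normalization for each $i$ and is the source of the awkwardness you flag. In the invariant form each inequality visibly cuts out a tropical half-space, so integral tropical convexity is immediate, and the explicit tropical combination you want is simply $(y_0,\dots,y_n)=\min_i\big((a_{i,0},\dots,a_{i,n})+(y_i-a_{i,i})\cdot\mathbf 1\big)$.

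Second, your assessment of difficulty is inverted. The necessity of \eqref{eq:moduli of lls and linked flags 1} is the \emph{easy} direction: the generators $w_{v_k}$ always lie in $V(G)^{\mathrm{conv}}$, so if equality holds they lie in $V(\overline G(w_0))$, and plugging $x_w=(a_{k,0},\dots,a_{k,n})$ into the membership inequalities gives exactly $a_{k,i}-a_{k,j}\ge a_{i,i}-a_{i,j}$. No clever witness is needed. Conversely, the argument you first wrote down and then discarded---``$w_{v_i}\in V(\overline G(w_0))$, so by convexity $V(G)^{\mathrm{conv}}\subset V(\overline G(w_0))$''---is precisely the proof of sufficiency; it is not valid unconditionally (concentration of $w_{v_i}$ on $v_i$ does \emph{not} by itself put $w_{v_i}$ in $\overline G(w_0)$), but it becomes valid exactly when \eqref{eq:moduli of lls and linked flags 1} holds. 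So rather than abandoning that line, you should have recognized it as the content of the ``if'' direction.
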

\begin{proof}
For each $w\in V(G(w_0))$, fix a path in $V(G(w_0))$ from $w_0$ to $w$ and let $x_{w,j}\geq 0$ be the number of twists of $v_j$ in the path.  
Then for each $0\leq i\leq n$, $w_{v_i}$ is obtained from $w$ by twisting $a_{i,j}-x_{w,j}$ times at $v_j$ for each $0\leq j\leq n$. We have $w\in V(\overline G(w_0))$ if and only if $a_{i,i}-x_{w,i}\leq a_{i,j}-x_{w,j}$ for all $0\leq i\leq n$ and $0\leq j\leq n$ by Proposition \ref{prop:twisting minimal path}. In other words, we must have 
\begin{equation}\label{eq:moduli of lls and linked flags 2}
x_{w,i}-x_{w,j}\geq a_{i,i} -a_{i,j}\mathrm{\ for\ all\ }0\leq i,j\leq n. \end{equation} 

Note that $w$ is identified with $(x_{w,0},...,x_{w,n})$ in $\mathbf{TP}^n$. It is easy to see that $V(\overline G(w_0))$ is integrally tropically convex, since each single inequality in (\ref{eq:moduli of lls and linked flags 2}) defines a tropically convex set, and so is their intersection. On the other hand, for $(y_0,y_1,...,y_n)\in V(\overline G(w_0))$, by (\ref{eq:moduli of lls and linked flags 2}) we have 
$$(y_0,y_1,...,y_n)=\min_{0\leq i\leq n}\Big( (a_{i,0},a_{i,1},...,a_{i,n})+(y_i-a_{i,i})\cdot(1,1,...,1)\Big),$$ where by minimum we mean taking the coordinate-wise minimum.
Hence $V(\overline G(w_0))\subset V(G)^{\mathrm{conv}}$. 

If (\ref{eq:moduli of lls and linked flags 1}) is satisfied then we immediately have $w_{v_i}\in V(\overline G(w_0))$. Hence $V(\overline G(w_0))= V(G)^{\mathrm{conv}}$.
\end{proof}

\begin{rem}\label{rem:equally tropical convex}
Condition (\ref{eq:moduli of lls and linked flags 1}) can be satisfied if we choose $w_v$ ``sufficiently concentrated" on $v$. More precisely, given a tuple $(w_v)_v$ of concentrated multidegrees, replace each $w_v$ with $w'_{v}$ obtained from $w_v$ by negatively twist sufficiently many times at $v$; then we get a tuple $(w'_v)_v$ of concentrated multidegrees that satisfies Condition (\ref{eq:moduli of lls and linked flags 1}). In particular, the new $w'_v$ is contained in the new $V(\ov G(w_0))$, which is the integral tropical convex hull of all $w'_v$s.
\end{rem}

\subsection{The moduli space of limit linear series and smoothing property}\label{subsec:smoothing of limit linear series}We first recall the notion of a regular smoothing family as in \cite{osserman2019limit}.

\begin{defn}\label{defn:smoothing family} 
We say that a flat and proper family $\pi\colon X\rightarrow B=\spec(R)$ of curves is a \textit{regular smoothing family} if (1) $X$ is regular and the generic fiber $X_\eta$ is smooth; (2) the special fiber $X_0$ of $\pi$ is a (split) nodal curve; and (3) $\pi$ admits sections through every component of $X_0$.
\end{defn}

Since $R$ is complete, the reduction map from $X_\eta(K)$ to the smooth locus of $X_0$ is surjective according to \cite[Proposition 10.1.40(a)]{liu2002algebraic} (This is used in Proposition \ref{prop:moduli of lls and linked flags} and Theorem \ref{thm:smoothing of lls}). Fix $w_0,d$ and $r$ as in Definition \ref{defn:limit linear series}, and recall that we denoted the dual graph of $X_0$ by $G$ and components by $(Z_v)_{v\in V(G)}$. Fix also concentrated multidegrees $(w_v)_v$ that satisfy (\ref{eq:moduli of lls and linked flags 1}), namely that $w_v\in V(\ov G(w_0))$ for all $v\in V(G)$, which is possible by Remark \ref{rem:equally tropical convex}.  We recall the construction of the moduli space of (limit) $\mathfrak g^r_d$s on $X/B$. The enriched structure on $X_0$ is naturally chosen to be $\mathscr O_v=\mathscr O_X(Z_v)|_{X_0}$ and $s_v=1|_{X_0}$. For a multidegree $w$ on $G$ of total degree $d$ denote by $\mathrm{Pic}^w(X/B)$ the moduli scheme of line bundles of relative degree $d$ over $B$ which have multidegree $w$ on $X_0$.

Let $\widetilde{\mathscr L}_w$ be a universal bundle over $\mathrm{Pic}^w(X/B)\times_B X$.
Take an effective divisor $D=\sum_{v\in V(G)}D_v$ on $X$ such that $D_v$ is a union of sections of $X/B$ that pass through $Z_v$ and avoid the nodes of $X_0$.
Assume $D$ is ``\textit{sufficiently ample}," in other words, $d_v=\deg D_v$ is big enough relative to all $w$ in $V(\overline G(w_0))$ and the genus $g_v$ of $Z_v$. In fact, we will see later that it is enough for us if for all $w\in V(\overline G(w_0))$ and $\mathscr L$ a line bundle on $X_0$ with multidegree $w$ (resp. for all $\mathscr L$ a line bundle on $X_\eta$ with degree $d$), we have $h^1(X_0,\mathscr L(D_0))=0$ (resp. $h^1(X_\eta,\mathscr L(D_\eta))$=0), where $D_0$ (resp. $D_\eta$) is the special fiber (resp. generic fiber) of $D$. Denote  $\tilde d=\sum_v d_v$. 
  
Let $\mathcal P:=\mathrm{Pic}^{w_0}(X/B)$. Consider the diagram:
$$
\begin{tikzcd}
\mathcal P\times_B X\rar\dar&\mathrm{Pic}^{w}(X/B)\times_B X\dar{p_w}\rar{\pi_w}&X\\
\mathcal P\rar{q_w} & \mathrm{Pic}^{w}(X/B).
\end{tikzcd}
$$
Here $q_w$ is induced by tensoring with $\mathscr O_X(Z_v)$ each time $v$ appears in the minimal path in $G(w_0)$ from $w_0$ to $w$ (when restricted to $X_0$, this is just tensoring with $\mathscr O_{w_0,w}$ in Notation \ref{nt:twisting graph}). For simplicity we denote the pullbacks of the denoted maps above by themselves, if there's no confusion. 
Let $\mathcal L_w=q_w^*(\widetilde{\mathscr L}_{w}\otimes\pi_w^*\mathcal O_X(D))$ and $\mathcal E_w=p_{w*}\mathcal L_w$. For $v\in V(G)$ let $\mathcal L_v=q_{w_v}^*(\widetilde{\mathscr L}_{w_v}\otimes \pi_{w_v}^*\mathcal O_X(D)|_{D_v})$ and $\mathcal E_v=p_{w_v*}\mathcal L_v$ over $\mathcal P$. Then $\mathcal E_w$ (resp. $\mathcal E_v$) is a rank-$(d+\tilde d-g+1)$ (resp. rank-$ d_v$) vector bundle by the choice of $d_v$ and \cite[\S 0.5]{mumford1994geometric}. Let $\Gr(r+1,\mathcal E_w)$ be the relative Grassmannian over $\mathcal P$, and $\mathcal G^1$ be the product of all $\Gr(r+1,\mathcal E_{w_v})$s over $\mathcal P$, where $v$ runs over $V(G)$. Similarly, let $\widetilde{\mathcal G}^1$ be the product of all $\Gr(r+1,\mathcal E_w)$s over $\mathcal P$, where $w$ runs over $V(\overline G(w_0))$. Then $\mathcal G^1$ (resp. $\widetilde{\mathcal G}^1$) is the ambient space inside which we will define the moduli space of limit (resp. linked) linear series. To reduce the notation, in the rest of construction, for the pullback of a vector bundle, we will not mention the morphism of the pullback but only specify the scheme that the vector bundle lies on.

Let $\mathcal V_w$ be the universal subbundle on $\Gr(r+1,\mathcal E_w)$ and $\mathcal G^2$ be the locus in $\mathcal G^1$ where 
\begin{equation}\label{eq:determinantal condition of limit linear series}
\mathcal E_w\rightarrow\bigoplus_{v\in V(G)} \mathcal E_{w_v}/\mathcal V_{w_v}
\end{equation}
has rank at most $d+\tilde d-g-r$ for any $w\in V(\overline G(w_0))$, where the map $\mathcal E_w\rightarrow\mathcal E_{w_v}$ is induced by multiplying with $1\in \mathscr O_X(Z_u)$ each time $u$ appears in the minimal path in $G(w_0)$ from $w$ to $w_v$. (Again, when restricted to $X_0$, this is just $f_{w,w_v}$ in Notation \ref{nt:twisting map} up to tensoring with the special fiber of $D$.) 
Accordingly, let $\widetilde{\mathcal G}^2$ be the locus in $\widetilde{\mathcal G}^1$ over which the composition of the morphisms 
\begin{equation}\label{eq:determinantal condition of linked linear series}
\mathcal V_w\hookrightarrow\mathcal E_w\rightarrow \mathcal E_{w'}/\mathcal V_{w'}
\end{equation}
vanishes for all $w,w'\in V(\overline G(w_0))$. Now conditions (\ref{eq:determinantal condition of limit linear series}) and (\ref{eq:determinantal condition of linked linear series}) match with conditions (\ref{eq:limit linear series}) and (\ref{eq:linked linear series}) respectively. The only issue now is that we are tensoring everything with $\mathscr O_X(D)$ in the beginning. Hence,
let $\mathcal G$ (resp. $\widetilde{\mathcal G}$) be the locus in $\mathcal G^2$ (resp. $\widetilde {\mathcal G}^2$) where the map $\mathcal V_{w_v}\rightarrow \mathcal E_v$ vanishes identically for each $v\in V(G)$. Then $\mathcal G$ and $\widetilde{\mathcal G}$ are the desired moduli spaces. Namely, the generic fiber $\mathcal G_\eta$ (resp. $\widetilde {\mathcal G}_\eta$) is the moduli space of $\mathfrak g^r_d$s on $X_\eta$ and the special fiber $\mathcal G_0$ (resp. $\widetilde {\mathcal G}_0$) parametrizes limit (resp. linked) $\mathfrak g^r_d$s on $X_0$ of multidegree $w_0$.

\begin{rem}\label{rem:different scheme structure of moduli of limit linear series}
It is unclear whether the scheme structure of $\mathcal G$ agrees with the moduli space constructed in \cite{osserman2019limit}, although they are the same as topological spaces. The main subtlety is that the determinantal condition in (\ref{eq:determinantal condition of limit linear series}) for $\mathcal G$ is imposed for all $w\in V(\overline G(w_0))$, whereas in \cite{osserman2019limit} it is imposed for all $w\in V(G(w_0))$. See for example the proof of \cite[Proposition 3.2.7]{lieblich2019universal}. Nevertheless, the proof of our smoothing theorem will only involve dimension estimation, hence  the scheme-structure of the moduli space is irrelevant. 
\end{rem}

We next prove a smoothing property of limit linear series on $X_0$ under certain technical assumptions. This is essentially a consequence of dimension estimation of $\mathcal G$. Since it is an intersection of determinantal loci in $\mathcal G^2$, we need to first examine the dimension of $\mathcal G^2$. To do this, note that there is a natural forgetful map $\tilde \pi\colon \widetilde {\mathcal G}^2\rightarrow \mathcal G^2$ as explained in Remark \ref{rem:linked to limit}, and recall that the notion of limit linear series is independent of the choice of concentrated multidegrees.

\begin{prop}\label{prop:moduli of lls and linked flags}
Let $(w_v)_v$ be a set of concentrated multidegrees that satisfy (\ref{eq:moduli of lls and linked flags 1}). Then $\widetilde{\mathcal G}^2$ is covered by linked  Grassmannians. More precisely, let $s\colon B\rightarrow \mathcal P$ be any section of $\mathcal P\rightarrow B$, then the fiber product $\widetilde{\mathcal G}^2\times_\mathcal PB$ is isomorphic to the linked Grassmannian $LG_{r+1}(\Gamma_s)$ associated to a convex configuration $\Gamma_s$  of lattice (classes) in $\Gamma (X_\eta, L(D_\eta))$, where $ L$ is the line bundle on $X_\eta$ corresponding to the generic point of $s$. 
Moreover, for $w\in V(\overline G(w_0))$ let $  L'_w$ be the extension of $L$ to $X$ with multidegree $w$ on $X_0$ and $  L_w= L'_w(D)$, then $\Gamma_s$ is the convex hull of $\{\Gamma(X, L_{w_v})\}_{v\in V(G)}$.
\end{prop}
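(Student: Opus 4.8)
## Proof proposal for Proposition \ref{prop:moduli of lls and linked flags}

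The plan is to unwind the construction of $\widetilde{\mathcal G}^2$ fiber-by-fiber over $\mathcal P$ and match it against Definition \ref{defn:mustafin degeneration functor}. Fix a section $s\colon B\to \mathcal P$; its generic point gives a line bundle $L$ on $X_\eta$ of degree $d$, and its closure gives, for each $w\in V(\overline G(w_0))$, an extension $L'_w$ to $X$ with multidegree $w$ on $X_0$. Twisting by the ample divisor $D$, set $L_w = L'_w(D)$. The key point is that pulling the bundles $\mathcal E_w$ back along $s$ yields $\Gamma(X, L_w) \cong \Gamma(X, L'_w(D))$, which — because $D$ was chosen sufficiently ample so that $h^1$ vanishes on every fiber — is a \emph{free} $R$-module whose formation commutes with base change; its generic fiber is $\Gamma(X_\eta, L(D_\eta))$. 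So each $\mathcal E_w\times_{\mathcal P} B$ is a lattice $L_w^{\mathrm{lat}} := \Gamma(X, L_w)$ inside the fixed $K$-vector space $V := \Gamma(X_\eta, L(D_\eta))$, and the condition defining $\widetilde{\mathcal G}^2$ — that $\mathcal V_w \hookrightarrow \mathcal E_w \to \mathcal E_{w'}/\mathcal V_{w'}$ vanishes for all $w, w'$ — becomes exactly the compatibility condition in Definition \ref{defn:mustafin degeneration functor} for the collection of lattices $\{L_w^{\mathrm{lat}}\}_{w\in V(\overline G(w_0))}$, once we check that the transition maps $\mathcal E_w \to \mathcal E_{w'}$ pulled back along $s$ are the maps $V\to V$ given by multiplication by the appropriate power of $\pi$ (equivalently, the inclusions of lattices induced by $s_{w,w'}$). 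This last identification is where Notation \ref{nt:twisting map} and the fact that $s_v$ cuts out $Z_v$ (so that on the total space $s_v$ becomes a uniformizer-type section) are used.

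The next step is to verify convexity of the resulting configuration $\Gamma_s := \{[L_w^{\mathrm{lat}}]\}_{w}$ in $\mathfrak B^0_{\dim V}$, and to identify it as the convex hull of the finite subcollection $\{\Gamma(X, L_{w_v})\}_{v\in V(G)}$. Here I would invoke Proposition \ref{prop:tropical convexity of vgw0}: under hypothesis (\ref{eq:moduli of lls and linked flags 1}) we have $V(\overline G(w_0)) = V(G)^{\mathrm{conv}}$, the integral tropical convex hull of the $w_v$. The translation between tropical convexity of multidegrees and lattice-theoretic convexity of the $L_w^{\mathrm{lat}}$ is the crux: a twist at a vertex $v$ corresponds to intersecting/enlarging a lattice by a rank-one-on-each-component operation, and the coordinates $(x_{w,0},\dots,x_{w,n})$ identifying $w$ with a point of $\mathbf{TP}^n$ are precisely (up to the global $\mathbb R\cdot\mathbf 1$ ambiguity, which matches homothety of lattices) the exponents recording how $L_w^{\mathrm{lat}}$ sits relative to $L_{w_0}^{\mathrm{lat}}$. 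So $L_{w}^{\mathrm{lat}} \cap L_{w'}^{\mathrm{lat}}$ has multidegree given by the coordinatewise minimum, which is again in $V(\overline G(w_0))$; this gives convexity, and the same computation (as in the displayed minimum formula in the proof of Proposition \ref{prop:tropical convexity of vgw0}) shows every $L_w^{\mathrm{lat}}$ is a lattice-theoretic convex combination of the $L_{w_v}^{\mathrm{lat}}$, hence $\Gamma_s$ is their convex hull.

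Finally, with $\Gamma_s$ identified, Proposition \ref{prop:mustafin degeneration functor representable} and Definition \ref{defn:linked flag schemes} give that the functor $\mathcal{LG}_{r+1}(\Gamma_s)$ is represented by $LG_{r+1}(\Gamma_s)$, and the functorial identification above upgrades to an isomorphism of schemes $\widetilde{\mathcal G}^2\times_{\mathcal P} B \cong LG_{r+1}(\Gamma_s)$, since both represent the same functor on $R$-schemes (the Grassmannian-bundle-plus-incidence-conditions description of $\widetilde{\mathcal G}^2$ over $\mathcal P$ restricts, over the section $s$, to the closed subscheme of $\prod_w \mathrm{Gr}(r+1, L_w^{\mathrm{lat}})$ cut out by exactly the vanishing loci in Proposition \ref{prop:mustafin degeneration functor representable}). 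I expect the main obstacle to be the second step: carefully matching the combinatorics of twists (negative twists, the role of the enriched structure, and the $\mathbb R\cdot\mathbf 1$ quotient) with the geometry of lattices in $V$, and in particular checking that the transition maps really are honest multiplication-by-$\pi^k$ maps and not merely injections with the right cokernel length — this requires being careful about choices of bases / trivializations of $\mathscr O_X(Z_v)$ near the nodes. The first and third steps are essentially bookkeeping once the cohomology-and-base-change input (guaranteed by sufficient ampleness of $D$) is in place.
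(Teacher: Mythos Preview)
Your proposal is correct and follows essentially the same approach as the paper's proof. The paper makes the tropical-to-lattice translation crisp via the two identities
\[
\Gamma\bigl(X, L_{w_0}(\textstyle\sum a_vZ_v)\bigr)\cap \Gamma\bigl(X, L_{w_0}(\textstyle\sum b_vZ_v)\bigr)=\Gamma\bigl(X, L_{w_0}(\textstyle\sum \min\{a_v,b_v\}Z_v)\bigr)
\]
and $\pi\cdot \Gamma\bigl(X, L_{w_0}(\sum a_vZ_v)\bigr)=\Gamma\bigl(X, L_{w_0}(\sum (a_v+1)Z_v)\bigr)$, which you allude to but do not state explicitly; these immediately reduce the convex-hull claim to the integral tropical convexity of $V(\overline G(w_0))$ from Proposition~\ref{prop:tropical convexity of vgw0}, making your anticipated ``main obstacle'' disappear.
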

\begin{proof}
 By construction, $\widetilde{\mathcal G}^1\times_\mathcal PB$ is the product over $B$ of the Grassmannians $\Gr(r+1,\Gamma(X, L_w))$ for $w\in  V(\ov G(w_0))$. For $w'\in V(\overline G(w_0))$, suppose the minimal path in $V(G(w_0))$ from $w$ to $w'$ contains $a_v$ twists at $v$, then the twisting map from $ L_w$ to $L_{w'}$ is just the inclusion $  L_w\hookrightarrow    L_w(\sum a_vZ_v)\simeq  L_{w'}$. Hence, by the definition of $\widetilde {\mathcal G}^2$, it remains to show that the configuration $\{\Gamma(X, L_w)\}_{w\in V(\overline G(w_0))}$ is the convex hull of $\{\Gamma(X, L_{w_v})\}_{v\in V(G)}$.

Note that the intersection of global sections 
$$\Gamma(X, L_{w_0}(\sum a_vZ_v))\cap \Gamma(X, L_{w_0}(\sum b_vZ_v))=\Gamma(X, L_{w_0}(\sum \min\{a_v,b_v\}Z_v))$$
is compatible with taking the minimum of the coefficients of each $Z_v$. Also, we have 
$$\pi\cdot \Gamma(X, L_{w_0}(\sum a_vZ_v))=\Gamma(X, L_{w_0}(\sum (a_v+1)Z_v)).$$
Thus the conclusion reduces to the integral tropical convexity of $V(\ov G(w_0))$ as a set in $\mathbf{TP}^{|V(G)|-1}$, which follows from Proposition \ref{prop:tropical convexity of vgw0} and the choice of $(w_v)_v$.
\end{proof}

\begin{rem}\label{rem:multidegrees to lattices}
We warn the reader that in the proof of Proposition \ref{prop:moduli of lls and linked flags} different multidegrees $w$ may give homothetic lattices $\Gamma(X,L_w)$. For instance, see Proposition \ref{prop:cycle curve example} (2). However, this won't affect the proof. 
\end{rem}


\begin{thm}\label{thm:smoothing of lls}
Let $X/B$ be a smoothing family with special fiber $X_0$. 
Let $w_0$ be a multidegree on $G$ of total degree $d$, and choose concentrated multidegrees $(w_v)_v$ satisfying (\ref{eq:moduli of lls and linked flags 1}). Suppose 
\begin{enumerate}
    \item the map $\tilde \pi\colon \widetilde {\mathcal G}^2\rightarrow \mathcal G^2$ is surjective;
    \item the linked Grassmannians $LG_{r+1}(\Gamma_s)$ in Proposition \ref{prop:moduli of lls and linked flags} are irreducible for all sections $s$.
\end{enumerate}
If the moduli space $\mathcal G_0$ of limit $\mathfrak g^r_d$s of multidegree $w_0$ on $X_0$ has dimension $\rho=g-(r+1)(g-d+r)$ at a given point, then the corresponding limit linear series arises as the limit of a linear series on the geometric generic fiber of $X$. More precisely, $\mathcal G$ has universal relative dimension at least $\rho$ over $B$; and if $\mathcal G_0$ has dimension exactly $\rho $ at a point, then $\mathcal G$ is universal open at that point; if furthermore $\mathcal G_0$ is reduced at a point, then $\mathcal G$ is flat at that point.

Moreover, if $\Gamma_s$ is locally linearly independent for all sections $s$, then (1) and (2) are satisfied.
\end{thm}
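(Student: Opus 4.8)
The plan is to reduce the entire statement to one dimension estimate for the determinantal locus $\mathcal G^2$, extracted from the linked--Grassmannian description of $\widetilde{\mathcal G}^2$ together with hypotheses (1) and (2); the ``Moreover'' is then immediate from Theorem~\ref{thm:flatness} and Lemma~\ref{lem:lifting limit linear series}. Set $N=(r+1)(d+\tilde d-g-r)$, so that $\Gr(r+1,\mathcal E_w)$ has relative dimension $N$ over $\mathcal P=\mathrm{Pic}^{w_0}(X/B)$, and record the bookkeeping identity $g+N-(r+1)\tilde d=\rho$. The scheme $\mathcal P$ is smooth over $B$ of relative dimension $g$ with irreducible fibres; since $R$ is complete, every $\kappa$-point of $\mathcal P_0$ lifts to a section $s\colon B\to\mathcal P$, and these sections cover $\mathcal P_0$. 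Over the generic point all twisting maps $f_{w,w'}$ are isomorphisms (they are multiplication by units on $X_\eta$), so $\mathcal G^2_\eta=\widetilde{\mathcal G}^2_\eta$ is the small diagonal in $\prod_w\Gr(r+1,\mathcal E_w)|_\eta$, that is, the relative Grassmannian $\Gr(r+1,\mathcal E)$ over $\mathcal P_\eta$, which is irreducible of dimension $g+N$; and $\mathcal G_\eta=G^r_d(X_\eta)$.

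The crux is to show that $\widetilde{\mathcal G}^2$, and then $\mathcal G^2$, is irreducible. For a section $s$, Proposition~\ref{prop:moduli of lls and linked flags} identifies $\widetilde{\mathcal G}^2\times_{\mathcal P,s}B$ with $LG_{r+1}(\Gamma_s)$, which is irreducible by hypothesis~(2); as it is projective and dominant over $B$, its reduction is integral, hence flat over $R$, so its special fibre is contained in the closure of its generic fibre. Since the sections $s$ cover $\mathcal P_0$ and closed points of $\widetilde{\mathcal G}^2_0$ map to closed points of $\mathcal P_0$, this yields $\widetilde{\mathcal G}^2_0\subseteq\overline{\widetilde{\mathcal G}^2_\eta}$, whence $\widetilde{\mathcal G}^2=\overline{\widetilde{\mathcal G}^2_\eta}$ is irreducible and flat over $B$. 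As $\tilde\pi$ is proper (both sides being proper over $\mathcal P$) and surjective by hypothesis~(1), $\mathcal G^2=\tilde\pi(\widetilde{\mathcal G}^2)=\overline{\mathcal G^2_\eta}$ is likewise irreducible and flat over $B$, of dimension $g+N+1$. I expect this to be the main obstacle: controlling the components of the special fibre so that it adds nothing to the closure of the generic fibre is exactly where hypothesis~(2) enters, via ``irreducible and flat over a discrete valuation ring $\Rightarrow$ special fibre lies in the closure of the generic fibre,'' and it forces the somewhat delicate passage to sections through every point of $\mathcal P_0$.

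The dimension count is then routine. The scheme $\mathcal G$ is cut out in $\mathcal G^2$ by the $(r+1)\tilde d$ local equations expressing the vanishing of $\mathcal V_{w_v}\to\mathcal E_v$, $v\in V(G)$; so by Krull's Hauptidealsatz (and catenarity of finite-type $R$-schemes) every component of $\mathcal G$ has dimension at least $g+N+1-(r+1)\tilde d=\rho+1$. Hence every component of $\mathcal G_0$ has dimension at least $\rho$, and with the classical Brill--Noether existence bound $\dim_x G^r_d(X_{\bar\eta})\ge\rho$ this shows $\mathcal G\to B$ has universal relative dimension at least $\rho$. If moreover $\dim_x\mathcal G_0=\rho$ at a point $x$, then $\dim_x\mathcal G=\rho+1$ and no component of $\mathcal G$ through $x$ can lie in $\mathcal G_0$, so every component through $x$ dominates $B$; standard facts about morphisms to a discrete valuation ring (cf.\ \cite{Olls}) then give that $\mathcal G\to B$ is universally open at $x$, and, if $\mathcal G_0$ is in addition reduced at $x$, flat at $x$. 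Universal openness already forces $x\in\overline{\mathcal G_\eta}$, so the limit linear series at $x$ is a specialization of a $\mathfrak g^r_d$ on a finite base change of the generic fibre, hence of one on the geometric generic fibre.

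Finally, for the ``Moreover'': if every $\Gamma_s$ is locally linearly independent, then (2) is precisely Theorem~\ref{thm:flatness}. For (1) one lifts each point of $\mathcal G^2$ to $\widetilde{\mathcal G}^2$; over $\eta$ this is trivial since $\tilde\pi_\eta$ is an isomorphism, so it suffices to treat a point over a $\kappa$-point $s_0$ of $\mathcal P_0$. Choosing a lift $s$, the special fibre of $LG_{r+1}(\Gamma_s)$ is the quiver Grassmannian of $M_{\Gamma_s}$ with dimension vector $(r+1\hh r+1)$, whose vector spaces are the $\Gamma(X_0,\mathscr L_w(D_0))$ (using the ampleness of $D$ to identify these with $\overline{\Gamma(X,L_w)}$), and the determinantal condition~(\ref{eq:determinantal condition of limit linear series}) defining $\mathcal G^2$ says precisely that, for a point $(\mathscr L,(V_v)_v)$, each $W_w:=\bigcap_v f_{w,w_v}^{-1}(V_v)$ has dimension at least $r+1$ --- which is exactly the hypothesis of Lemma~\ref{lem:lifting limit linear series} for the configuration $\Gamma_s$ and the subset $\{w_v\}_v$. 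That lemma produces a subrepresentation of $M_{\Gamma_s}$ restricting to $(V_v)_v$, i.e.\ a linked linear series mapping to our point; since the sections cover $\mathcal P_0$ and $\tilde\pi$ is proper, surjectivity of $\tilde\pi$ follows.
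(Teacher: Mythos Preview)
Your argument is correct and follows essentially the same route as the paper: reduce everything to the single estimate $\dim\mathcal G^2=g+N+1$, obtained by showing the special fibre of $\mathcal G^2$ is swallowed by the closure of its generic fibre via the section-by-section irreducibility coming from (2), and then conclude by the determinantal codimension count. The only cosmetic difference is that you first prove $\widetilde{\mathcal G}^2$ is irreducible and push forward through $\tilde\pi$, whereas the paper applies the surjectivity of $\tilde\pi$ section-by-section to get $\mathcal G^2\times_{\mathcal P}B$ irreducible directly; and you argue openness/flatness by hand, whereas the paper packages this into citations of \cite[Proposition~3.7, Corollary~5.1]{osserman2015relative}.
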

\begin{proof}
The fact that local linear independence implies (1) and (2) follows from Theorem \ref{thm:flatness} and Lemma \ref{lem:lifting limit linear series}. We now mimic the proof given in \cite[\S 6]{osserman2019limit}. By \cite[Proposition 3.7]{osserman2015relative} it is enough to show that the map $\mathcal G \rightarrow B$ has universal relative dimension at least $\rho$ over $B$. By \cite[Corollary 5.1]{osserman2015relative} it remains to check that each component of $\mathcal G$, as a closed subscheme of $\mathcal G^1$, has dimension at least $\rho+1$.
 
 Since $\tilde \pi$ 
 is surjective, the fiber product $\mathcal G^2\times_\mathcal P B$ is irreducible for any section $s\colon B\rightarrow \mathcal P$ of $\mathcal P/B$ by condition (2). Hence its special fiber $\mathcal G^2\times_\mathcal P\kappa$ is contained in the closure of its generic fiber $\mathcal G^2\times_\mathcal PK$. Since $K$ is complete, the reduction map from $X_\eta(K)$ to the smooth locus of $X_0$ is surjective. Therefore,
 each point of the special fiber of $\mathcal P= \mathrm{Pic}^{w_0}(X/B)$ is contained in the closure of a $K$-point of the generic fiber $\mathcal P_\eta=\mathrm{Pic}^d(X_\eta)$. 
 It follows that the special fiber $\mathcal G^2_0$ of $\mathcal G^2$ is contained in the union of special fibers of the product $\mathcal G^2\times_\mathcal PB$ with respect to all sections of $\mathcal P/B$, hence contained in the closure of the generic fiber $\mathcal G^2_\eta$. Obviously $\mathcal G^2_\eta$ is a relative Grassmannian over $\mathcal P_\eta$, hence it is irreducible of dimension $d'=g+(r+1)(d+\tilde d-g-r)$. Thus $\mathcal G^2$, as an irreducible closed subscheme of $\mathcal G^1$, has dimension $d'+1$ by \cite[Proposition 6.6]{osserman2015relative}.
 It follows that $\mathcal G$, as an intersection of  determinantal loci in $\mathcal G^2$, has component-wise dimension at least 
$$d'+1-\sum_{v\in V(G)}(r+1)d_v=\rho+1.$$
 \end{proof}

\subsection{Examples of smoothing limit linear series.}\label{subsec:example}
In this subsection we give two examples of a  reducible curve for which the two conditions in Theorem \ref{thm:smoothing of lls} are satisfied. Hence we get the smoothing theorem provided that the moduli space of limit linear series has the expected dimension. Moreover, in the first case the configuration $\Gamma_s$, as in Proposition \ref{prop:moduli of lls and linked flags}, of the induced linked Grassmannian is a convex chain, while in the second case $\Gamma_s$ is star-shaped (see Example \ref{ex:local linear indep} for definitions). As before, let $X/B$ be a regular smoothing family with special fiber $X_0$.

\subsubsection{The two-component case}
\label{subsubsec:two component case}
Suppose $X_0$ only has two components $Z_u$ and $Z_v$. Then all $\Gamma_s$'s in Proposition \ref{prop:moduli of lls and linked flags} are the convex hull of $\Gamma(X,L_{w_u})$ and $\Gamma(X,L_{w_v})$. Alternatively, one can write the convex hull as $\{\Gamma(X,L_{w_u}), \Gamma(X,L_{w_u}(Z_u)),\dotsc,\Gamma(X,L_{w_u}(aZ_u))=\Gamma(X,L_{w_v})\}$ for some integer $a\geq 0$. This is a convex chain, hence locally linearly independent.
In this case the conditions of Theorem \ref{thm:smoothing of lls} are satisfied, and we get the smoothing theorem for limit linear series. Moreover, one can derive from this case a smoothing theorem for limit linear series on curves of pseudo-compact type. The idea is to realize the space of limit linear series on a pseudo-compact curve as a closed subscheme of a product of spaces of limit linear series on curves with two components, where each two-component curve corresponds to a pair of adjacent vertices of the dual graph of the original curve, hence the dimension estimation of the former follows from the estimation of the later, which leads to the smoothing property. See the proof of \cite[Theorem 6.1]{osserman2019limit} for further details.

Note that it is unclear to us whether the lattice configuration associated to a (pseudo)compact type curve is locally linearly independent in general. Nevertheless, there are evidences showing that at least when the multidegree of the limit linear series is relatively small, the corresponding lattice configuration is locally linearly independent, and potentially one can also realize the associated tree of this configuration as a subdivision of the tree induced by the dual graph of the original curve. See Proposition \ref{prop:cycle curve example} (4) for an example.

\subsubsection{The three-rational-component case}\label{subsubsec:cyclic curve}
Let $X_0$ be a curve consisting of three (smooth) rational components $Z_1$, $Z_2$ and $Z_3$. For $i\neq j$, suppose $Z_i$ intersect $Z_j$ at $n_{i,j}$ points $P^k_{i,j}$, where $1\leq k\leq n_{i,j}$. Then $X_0$ is not of pseudo-compact type if $n_{i,j}>0$ for all $i,j$. Let $v_i$ be the vertex in the dual graph of $X_0$ corresponding to $Z_i$.

\tikzset{every picture/.style={line width=0.75pt}}
\begin{figure}[ht]	
\begin{tikzpicture}[x=0.5pt,y=0.5pt,yscale=-0.9,xscale=0.9]
\import{./}{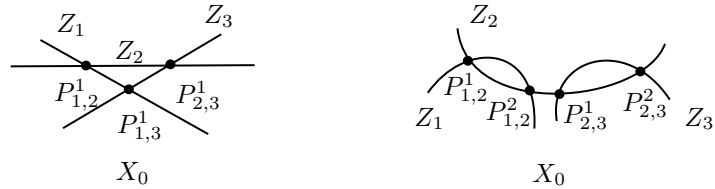}
\end{tikzpicture}	
\caption{Two curves with three rational components: on the left each pair of the components intersect at $n_{i,j}=1$ point; on the right we have $n_{1,3}=0$ while $n_{1,2}=n_{2,3}=2$, and the curve is of pseudo-compact type.}\label{fig: cyclic curve}
\end{figure}

Fix a multidegree $w_0=(a_1,a_2,a_3)$ such that $a_i< 2\min\limits_{1\leq j\leq 3, j\neq i}n_{i,j}$. Let $e_i$ be the multidegree that vanishes at $(v_j)_{j\neq i}$ and has degree 1 on $v_i$. Set 
$$w_{v_i}=w_0+\sum_{j\neq i}n_{i,j}(e_i-e_j)\mathrm{\ and\ }w_i=w_0-\sum_{j\neq i}n_{i,j}(e_i-e_j).$$
Then $w_{v_i}$ (resp. $w_i$) is obtained from $w_0$ by negative twisting (resp. twisting) at $v_i$. 
It is easy to check that $w_{v_i}$ is concentrated on $v_i$, and that $$V(\overline G(w_0))=\{w_0,w_{v_1},w_{v_2},w_{v_3},w_1,w_2,w_3\}$$
is the integral tropical convex hull of $w_{v_1},w_{v_2},w_{v_3}$. See $\overline G(w_0)$ on the left of Figure \ref{fig: twist graph} for an example. 
In the sequel we denote $V(\partial\overline G(w_0))=\{w_1,w_2,w_3\}$.

We choose $D\subset X$ with relative multidegree  $\sum_i(\sum_{j\neq i}n_{i,j}-a_i-1)e_i$ in the construction of the moduli space $\mathcal G$ of limit linear series. Let $L$ be a line bundle on $X_\eta$ induced by a section $s\colon B\rightarrow \mathcal P$ and $L_w$ its extension to $X$ as in Proposition \ref{prop:moduli of lls and linked flags}. Let $\overline L_w$ be the restriction of $L_w$ on $X_0$ and
recall that we have map $f_{w,w'}\colon \ov L_w\rightarrow \ov L_{w'}$ defined similarly as in Notation \ref{nt:twisting map} up to tensoring with the special fiber of $D$. By the first part of the following proposition, the divisor $D\subset X$ is ``sufficiently ample," hence is an appropriate choice for the construction of moduli of limit linear series on $X_0$ with multidegree $w_0$.


\begin{prop}\label{prop:cycle curve example}
Let $L$, $L_w$ and $\overline L_w$ be as above. 
\begin{enumerate}
\item For all $w\in V(\overline G(w_0))$, we have $h^1(X_0,\overline L_w)=0$ and $h^0(X_0,\overline L_w)=\sum_{i<j}n_{i,j}$.

\item For $w\in V(\partial\overline G(w_0))$ we have that $f_{w_0,w}$ induces an isomorphism  $\Gamma(X_0,\overline L_{w_0})\simeq \Gamma(X_0,\overline L_w)$, and $\dim f_{w_0,w_{v_i}}(\Gamma(X_0,\overline L_{w_0}))=\sum_{j\neq i}n_{i,j}$ for $1\leq i\leq 3$. In particular, 
$\Gamma(X, L_{w_0})$ is homothetic to $\Gamma(X, L_w)$ as lattices in $\Gamma(X_\eta,L)$ for $w\in V(\partial\overline G(w_0))$.  

\item The subspaces $f_{w_{v_i},w_0}(\Gamma(X_0,\overline L_{w_{v_i}}))$, which has dimension $n_{i_1,i_2}$ where $\{i_1,i_2\}=\{1,2,3\}\backslash \{i\}$, are linearly independent, hence generates $\Gamma(X_0,\overline L_{w_0})$.

\item The convex configuration $\Gamma_s$ is star-shaped, in particular, locally linearly independent. Moreover, if one of the $n_{i,j}$'s is zero (namely, $X_0$ is of pseudo-compact type), then $\Gamma_s$ is also a convex chain.
\end{enumerate}
\end{prop}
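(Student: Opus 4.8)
The plan is to compute everything explicitly on the three rational components, since $X_0$ is a tree-like union of $\PP^1$'s (even though its dual graph is a triangle) and line bundles on $X_0$ are determined by their restrictions to the $Z_i$'s together with gluing at the nodes. First I would handle (1): for each $w\in V(\ov G(w_0))$, the restriction $\ov L_w|_{Z_i}$ is a line bundle of some degree $d_{w,i}$ on $\PP^1$, and the normalization sequence
\[
0\to \ov L_w\to \bigoplus_{i=1}^3 \ov L_w|_{Z_i}\to \bigoplus_{i<j}\bigoplus_{k=1}^{n_{i,j}}\kappa_{P^k_{i,j}}\to 0
\]
computes $h^0$ and $h^1$. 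The point of the hypothesis $a_i<2\min_{j\neq i}n_{i,j}$ and the specific choices of $w_{v_i},w_i$ is to guarantee $d_{w,i}\geq -1$ for every $w\in V(\ov G(w_0))$ and every $i$, so that $h^1(Z_i,\ov L_w|_{Z_i})=0$; then $h^1(X_0,\ov L_w)=0$ follows from the long exact sequence provided the gluing map on $H^0$ is surjective, which is again a degree count (the total number of nodes $\sum_{i<j}n_{i,j}$ must not exceed $\sum_i(d_{w,i}+1)$, and one checks the evaluation maps are independent). Then $h^0(X_0,\ov L_w)=\sum_i(d_{w,i}+1)-\sum_{i<j}n_{i,j}$, and plugging in the multidegrees gives $\sum_{i<j}n_{i,j}$ in each case — this is a bookkeeping computation once the sign conditions are in place.

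For (2), note that $f_{w_0,w_i}$ corresponds to multiplication by the section cutting out the appropriate combination of components; since $w_i$ is obtained from $w_0$ by a genuine (non-negative) twist at $v_i$, this map is injective on global sections, and by part (1) both spaces have the same dimension $\sum_{i<j}n_{i,j}$, so it is an isomorphism. The claim $\dim f_{w_0,w_{v_i}}(\Gamma(X_0,\ov L_{w_0}))=\sum_{j\neq i}n_{i,j}$ is computed by identifying the image: sections in the image are exactly those whose restriction to $Z_j$ ($j\neq i$) vanishes to the order forced by the twist, and a dimension count on $Z_i$ and the nodes incident to $Z_i$ gives the stated number. The homothety statement for $\Gamma(X,L_{w_0})$ and $\Gamma(X,L_w)$, $w\in V(\partial\ov G(w_0))$, is then immediate: $f_{w_0,w}$ being an isomorphism on $\kappa$-points of the special fibers means the inclusion $\Gamma(X,L_{w_0})\hra \Gamma(X,L_w)$ has cokernel of length zero after inverting $\pi$ and is $\pi^a$ times a saturated sublattice, hence the two lattices are homothetic in $\Gamma(X_\eta,L)$.

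For (3), I would again work on the normalization: $f_{w_{v_i},w_0}$ is multiplication by the section vanishing on $Z_i$ to order $\sum_{j\neq i}n_{i,j}$ and with a pole cancelled on the $Z_j$'s, so its image inside $\Gamma(X_0,\ov L_{w_0})$ consists of sections supported (in the appropriate twisted sense) away from $Z_i$; concretely the image is the space of sections of $\ov L_{w_0}$ that vanish on $Z_i$, which has dimension $n_{i_1,i_2}$ by the computation in (1) applied to the subcurve $Z_{i_1}\cup Z_{i_2}$. These three subspaces pairwise intersect trivially (a section vanishing on $Z_i$ and on $Z_j$ vanishes on $Z_k$ too for a line bundle of this multidegree, forcing it to be zero), and $\sum_i n_{i_1,i_2}=\sum_{i<j}n_{i,j}=h^0(X_0,\ov L_{w_0})$, so they are linearly independent and span. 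Finally (4) is essentially a translation: by Proposition \ref{prop:moduli of lls and linked flags}, $\Gamma_s$ is the convex hull of the lattices $\Gamma(X,L_{w_{v_i}})$; part (2) shows $\Gamma(X,L_{w_0})$ is the unique interior vertex and is adjacent to each $\Gamma(X,L_{w_{v_i}})$, while (3) identifies $f_{w_{v_i},w_0}(\ov L_{w_{v_i}})=\ker f_{w_0,w_{v_i}}$ with the coordinate subspaces spanned by disjoint index sets $J_i$ of size $n_{i_1,i_2}$ — precisely the defining condition of a star-shaped configuration in Example \ref{ex:local linear indep}(2), which is locally linearly independent. If some $n_{i,j}=0$, the corresponding leaf collapses and the star degenerates to a path, i.e.\ a convex chain.

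The main obstacle I expect is part (1): getting the sign conditions $d_{w,i}\geq -1$ for \emph{all} seven multidegrees in $V(\ov G(w_0))$ simultaneously, and verifying that the evaluation-at-nodes map on $\bigoplus_i H^0(Z_i,\ov L_w|_{Z_i})$ has the right rank so that $h^1$ vanishes and $h^0$ comes out to exactly $\sum_{i<j}n_{i,j}$. Once (1) is nailed down, (2)–(4) are comparatively formal consequences of restriction/vanishing arguments on $\PP^1$ and the dictionary with lattice configurations already set up in Section \ref{sec:mus}.
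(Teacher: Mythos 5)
Your overall strategy is the same as the paper's: compute everything on the three rational components via the normalization sequence and degree counts, then translate to the lattice picture with Nakayama. Parts (1), (2) and (4) essentially reproduce the paper's argument, modulo two imprecisions. First, the degrees $\deg\ov L_w|_{Z_i}$ for $w\in V(\ov G(w_0))$ do not depend on the $a_i$ at all (e.g.\ $\deg\ov L_{w_0}|_{Z_i}=\sum_{j\neq i}n_{i,j}-1$, since the $a_i$ in $w$ cancels against the $-a_i$ in the multidegree of $D$), so the hypothesis $a_i<2\min_j n_{i,j}$ is not what makes (1) work; it is used beforehand to ensure the $w_{v_i}$ are concentrated and that $V(\ov G(w_0))$ is the stated seven-element set. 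Second, in (2), ``$w_i$ is obtained from $w_0$ by a genuine twist'' is not by itself a reason for injectivity of $f_{w_0,w_i}$ on global sections: $f_{w_0,w_{v_i}}$ is likewise induced by twisting and is \emph{not} injective. The actual reason is that the kernel consists of sections supported on $Z_i$ and vanishing at its $\sum_{j\neq i}n_{i,j}$ nodes, which is zero because $\deg\ov L_{w_0}|_{Z_i}=\sum_{j\neq i}n_{i,j}-1$; this is the degree count you do correctly elsewhere, so the fix is immediate.

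The genuine gap is in part (3). Pairwise trivial intersection of three subspaces, even together with the fact that their dimensions add up to the ambient dimension, implies neither linear independence nor spanning (three coplanar lines through the origin in $\kappa^3$ pairwise meet only at $0$ and have dimensions summing to $3$). The paper closes this with a direct argument that you should adopt: suppose $\sigma_1+\sigma_2+\sigma_3=0$ with $\sigma_i\in f_{w_{v_i},w_0}(\Gamma(X_0,\ov L_{w_{v_i}}))$, i.e.\ $\sigma_i$ vanishes on $Z_i$. Restricting to $Z_3$ gives $\sigma_1|_{Z_3}+\sigma_2|_{Z_3}=0$; since $\sigma_1|_{Z_3}$ vanishes at the $n_{1,3}$ nodes with $Z_1$ and $\sigma_2|_{Z_3}$ at the $n_{2,3}$ nodes with $Z_2$, their common value vanishes at $n_{1,3}+n_{2,3}$ points of a degree-$(n_{1,3}+n_{2,3}-1)$ line bundle on $Z_3\cong\PP^1$, hence is zero; repeating on each component forces $\sigma_1=\sigma_2=\sigma_3=0$. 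With independence established, spanning follows from your dimension count, and part (4) goes through as you describe.
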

\begin{proof}
(1) The conclusion follows since $\overline L_w$ has multidegree $w+\sum_i(\sum_{j\neq i}n_{i,j}-a_i-1)e_i$.

(2) The first claim follows from the fact that $f_{w_0,w}$ is zero along one component and injective along the other two, hence has trivial kernel on $\Gamma(X_0,\overline L_{w_0})$. Similarly, $f_{w_0,w_{v_i}}$ is injective along $Z_i$ and zero along the other two components, hence its kernel along $\Gamma(X_0,\overline L_{w_0})$ has dimension $n_{i_1,i_2}$ where $\{i_1,i_2\}=\{1,2,3\}\backslash \{i\}$.

(3) By (1) and (2) we have 
$$\dim f_{w_{v_i},w_0}(\Gamma(X_0,\overline L_{w_{v_i}}))=\dim\ker f_{w_0,w_{v_i}}|_{\Gamma(X_0,\overline L_{w_0})}
=n_{i_1,i_2}.$$

For the second claim,
take three vectors $(0,g_1,h_1),(f_2,0,h_2),(f_3,g_3,0)$ in the image of $\Gamma(X_0,\overline L_{w_{v_1}})$, $\Gamma(X_0,\overline L_{w_{v_2}})$ and $\Gamma(X_0,\overline L_{w_{v_3}})$ respectively, where the $i$-th component denote the restriction of the vector to $Z_i$, and suppose they sum to zero. Then $h_1+h_2=0$. As $h_1$ vanishes at $P^k_{1,3}$ and $h_2$ vanishes at $P^k_{2,3}$, both of them vanish at $P^k_{1,3}$ and $P^k_{2,3}$. Hence $h_1=h_2=0$ as they are both of degree $n_{1,3}+n_{2,3}-1$ and there are $n_{1,3}+n_{2,3}$ zero conditions. Similarly $f_1=f_2=0$ and $g_1=g_2=0$. 

(4) Take $\bar e_{i,j}\in \Gamma(X_0,\overline L_{w_{v_i}})$, where $1\leq j\leq n_{i_1,i_2}$, such that $f_{w_{v_i},w_0}(\overline e_{i,j})$ generates $f_{w_{v_i},w_0}(\Gamma(X_0,\overline L_{w_{v_i}}))$.
Lift $\overline e_{i,j}$ to $$e_{i,j}\in \Gamma(X,L_{w_{v_i}})=\Gamma(X,L_{w_0}(-Z_i))\subset \Gamma(X,L_{w_0})\subset \Gamma(X_\eta,L).$$ By (3) and Nakayama's Lemma, $\Gamma(X,L_{w_0})$ is generated by $\{ e_{i,j}\}_{i,j}$. Since $f_{w_{v_i,w_0}}(\overline e_{i,j})$ generates the kernel of $f_{w_0,w_{v_i}}$ on $\Gamma(X_0,\overline L_{w_0})$,  $f_{w_0,w_{v_i}}(\Gamma(X_0,\overline L_{w_0}))$ is generated by the images of all $\overline e_{k,j}$'s, where $k\neq i$. Hence $\Gamma(X_0,\overline L_{w_{v_i}})$ is generated by $\{\bar e_{i,j}\}_j$ and $\{f_{w_k,w_i}(\bar e_{k,j})\}_{k\neq i;j}$. As a result $\Gamma(X,L_{w_0}(-Z_i))$ is generated by $\{e_{i,j}\}_j$ and $\{\pi e_{k.j}\}_{k\neq i;j}$. This proves the first part. Moreover, if $n_{i,j}=0$, we can take $l\in \{1,2,3\}\backslash \{i,j\}$, then $\Gamma(X,L_{w_0}(-Z_l))=\pi\Gamma(X,L_{w_0})$, i.e., the two lattices are homothetic. Therefore, $\Gamma_s$ is a convex chain which consists of three lattice classes.
\end{proof}

The configuration $\Gamma_s$
and the associated quiver $Q(\Gamma_s)$ are illustrated in the right part of Figure \ref{fig: twist graph}. As a direct consequence of Proposition \ref{prop:cycle curve example} (4) and Theorem \ref{thm:smoothing of lls}, we have:

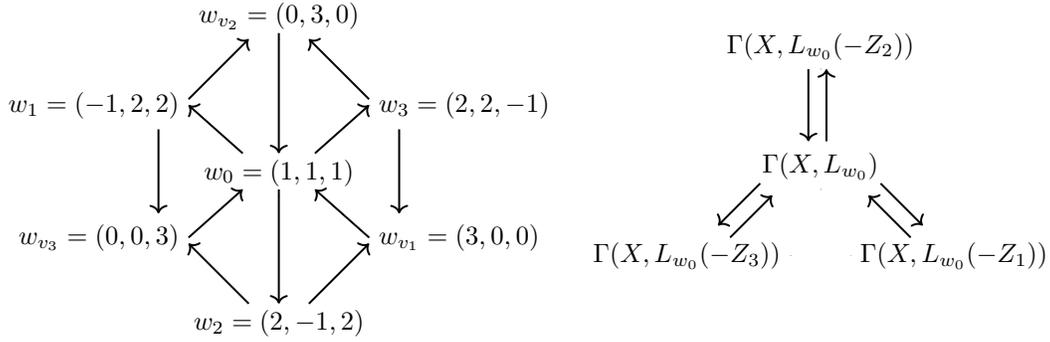
\begin{figure}[H]
\begin{tikzpicture}[scale=0.8]

   \draw[->] (1.1,1.2)--(2,2);
   \draw (0.5,0.5) node[circle, fill=black, scale=0, label=above:{$w_0=(1,1,1)$}]{};
   \draw[->] (-0.1,1.2)--(-1,2);
   \draw[->] (-1,-0.2)--(-0.1,0.6);
   \draw[->] (2,-0.2)--(1.1,0.6);
   \draw[->] (0.5,3.2)--(0.5,1.2);
   \draw[->] (0.5,0.6)--(0.5,-1.3);
   \draw (0.5,3.1) node[circle, fill=black, scale=0, label=above:{$w_{v_2}=(0,3,0)$}]{};
   \draw (0.5,-2) node[circle, fill=black, scale=0, label=above:{$w_2=(2,-1,2)$}]{};
   \draw (-1,2) node[circle, fill=black, scale=0, label=left:{$w_1=(-1,2,2)$}]{};
   \draw (-1,-0.2) node[circle, fill=black, scale=0, label=left:{$w_{v_3}=(0,0,3)$}]{};
   \draw (2,2) node[circle, fill=black, scale=0, label=right:{$w_3=(2,2,-1)$}]{};
   \draw (2,-0.2) node[circle, fill=black, scale=0, label=right:{$w_{v_1}=(3,0,0)$}]{};
   \draw[->] (-1,2.1)--(0,3.1);
   \draw[->] (-1.5,1.6)--(-1.5,0.2);
   \draw[->] (0,-1.3)--(-1,-0.3);
   \draw[->] (2,2.1)--(1,3.1);
   \draw[->] (2.5,1.6)--(2.5,0.2);
   \draw[->] (1,-1.3)--(2,-0.3);
   
   \draw[shift={(9,0)}] (0.5,0.6) node[circle, fill=black, scale=0, label=above:{$\Gamma( X,L_{w_0})$}]{};

   \draw[shift={(9,0)}][->] (-1,-0.2)--(-0.3,0.5);
   \draw[shift={(9,0)}][<-] (-1.2,0)--(-0.5,0.7);
   \draw[shift={(9,0)}][->] (2,-0.2)--(1.3,0.5);
    \draw[shift={(9,0)}][<-] (2.2,0)--(1.5,0.7);
   \draw[shift={(9,0)}][->] (0.3,2.6)--(0.3,1.4);
   \draw[shift={(9,0)}][<-] (0.6,2.6)--(0.6,1.4);
   \draw[shift={(9,0)}] (0.5,2.6) node[circle, fill=black, scale=0, label=above:{$\Gamma( X,L_{w_0}(-Z_2))$}]{};
   \draw[shift={(9,0)}] (0,-0.5) node[circle, fill=black, scale=0, label=left:{$\Gamma(X,L_{w_0}(-Z_3))$}]{};
   \draw[shift={(9,0)}] (1,-0.5) node[circle, fill=black, scale=0, label=right:{$\Gamma(X,L_{w_0}(-Z_1))$}]{};
\end{tikzpicture}
\caption{The left is $\ov G(w_0)$ in the case $n_{i,j}=1$ and $w_0=(1,1,1)$, the arrows represent twisting a vertex of the dual graph of $X_0$. The right is the quiver of $\Gamma_s$.}\label{fig: twist graph}
\end{figure}



\begin{cor}\label{cor:smoothing for cyclic curves}
Let $X_0$ and $w_0$ be as above. Let $X/B$ be a smoothing family with special fiber $X_0$.  Then 
any limit linear series on $X_0$ with multidegree $w_0$ arises as the limit of a linear series on the geometric generic fiber of $X$, if the moduli space of limit linear series is of expected dimension.
\end{cor}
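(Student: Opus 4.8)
The plan is to deduce the statement directly from Theorem \ref{thm:smoothing of lls}, so essentially all the work has already been front-loaded into Proposition \ref{prop:cycle curve example}. First I would verify that the hypotheses of Theorem \ref{thm:smoothing of lls} are met for the curve $X_0$ and the chosen multidegrees $w_0$ and $(w_{v_i})_i$. The concentrated multidegrees $w_{v_i} = w_0 + \sum_{j\neq i} n_{i,j}(e_i - e_j)$ were shown to lie in $V(\overline G(w_0))$ — in fact $V(\overline G(w_0))$ is precisely their integral tropical convex hull — so condition (\ref{eq:moduli of lls and linked flags 1}) holds (equivalently, $w_{v_i}\in V(\overline G(w_0))$, by the discussion following Remark \ref{rem:equally tropical convex}). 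Next, the divisor $D\subset X$ with relative multidegree $\sum_i(\sum_{j\neq i} n_{i,j} - a_i - 1)e_i$ is shown in Proposition \ref{prop:cycle curve example} (1) to be sufficiently ample: $h^1(X_0,\overline L_w) = 0$ for all $w\in V(\overline G(w_0))$, which is the cohomological vanishing required for the construction of $\mathcal G$, $\widetilde{\mathcal G}$, $\mathcal G^2$ etc. in Section \ref{subsec:smoothing of limit linear series} to go through.

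The crux is then that the configuration $\Gamma_s$ attached (via Proposition \ref{prop:moduli of lls and linked flags}) to each section $s\colon B\to\mathcal P$ is locally linearly independent. This is exactly the content of Proposition \ref{prop:cycle curve example} (4): $\Gamma_s$ is star-shaped with center $\Gamma(X,L_{w_0})$ and leaves the classes of $\Gamma(X,L_{w_0}(-Z_i))$, and every star-shaped configuration is locally linearly independent by Example \ref{ex:local linear indep} (2). (When some $n_{i,j}=0$ the curve is of pseudo-compact type and $\Gamma_s$ degenerates to a convex chain, which is still locally linearly independent by Example \ref{ex:local linear indep} (1); this case is already covered by Osserman's earlier work, but it falls out of the same argument.) Given local linear independence of all the $\Gamma_s$, the last sentence of Theorem \ref{thm:smoothing of lls} asserts that conditions (1) (surjectivity of $\tilde\pi$) and (2) (irreducibility of the $LG_{r+1}(\Gamma_s)$) are automatically satisfied — the surjectivity coming from Lemma \ref{lem:lifting limit linear series} and the irreducibility from Theorem \ref{thm:flatness}.

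With all hypotheses of Theorem \ref{thm:smoothing of lls} in place, the conclusion is immediate: if the moduli space $\mathcal G_0$ of limit $\mathfrak g^r_d$'s of multidegree $w_0$ on $X_0$ has the expected dimension $\rho = g - (r+1)(g-d+r)$ at a point, then $\mathcal G\to B$ has universal relative dimension at least $\rho$, $\mathcal G$ is universally open at that point, and the given limit linear series is realized as the limit of a linear series on the geometric generic fiber of $X$. I do not expect any genuine obstacle here — the corollary is a clean specialization of Theorem \ref{thm:smoothing of lls} — the only thing to be careful about is bookkeeping: checking that the specific $D$, $w_0$, and $(w_{v_i})_i$ chosen in \textsection\ref{subsubsec:cyclic curve} really do satisfy the standing hypotheses of the construction (in particular the bound $a_i < 2\min_{j\neq i} n_{i,j}$ on the multidegree $w_0$ is what makes Proposition \ref{prop:cycle curve example} (1)--(3) work), and invoking Remark \ref{rem:multidegrees to lattices} to note that distinct multidegrees giving homothetic lattices causes no harm.
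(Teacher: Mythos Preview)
Your proposal is correct and matches the paper's approach exactly: the paper presents this corollary with no separate proof, stating it ``as a direct consequence of Proposition \ref{prop:cycle curve example} (4) and Theorem \ref{thm:smoothing of lls}.'' Your write-up simply unpacks this dependence with appropriate care (checking that $(w_{v_i})_i$ satisfies (\ref{eq:moduli of lls and linked flags 1}), that $D$ is sufficiently ample via part (1), and that local linear independence of $\Gamma_s$ triggers the final clause of Theorem \ref{thm:smoothing of lls}), which is precisely the intended argument.
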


\appendix

\section{A counter-example by G\"ortz}\label{app:counter example}
In \cite{habich2014mustafin}, H\"abich stated an equational description of the Mustafin degeneration $ M_{\underline d} (\Gamma)$ as a subscheme of a product of projective spaces without proof. This was the key ingredient in the proof of the main theorem in the first version of the present paper, namely that linked flag schemes always agree with Mustafin degenerations as schemes. However, this claim is not true in general due to a counter-example communicated to us by Ulrich G\"ortz, which results in a gap in our previous proof. We now illustrate the (simplified) counter-example by G\"ortz.

We first recall the description of $ M_{\underline d} (\Gamma)$ by equations in \cite{habich2014mustafin}, where $\Gamma=\{[L_i]\}_{i\in I}\subset\mathfrak B^0_d$ is a convex collection of homothety classes of lattices in a $d$ dimensional $K$-vector space $V$, and $\underline d=(d_1,...,d_m)$ where $0<d_m<\cdots<d_1<d$ are positive integers. For each lattice $L$, the flag scheme $\mathrm{Flag}_{\underline d}(L)$ is embedded into the product of projective spaces $P=\prod_{j=1}^m\mathbb P^{{d\choose d_j}-1}_R$ by the Pl\"ucker embedding. For each $i$ pick a basis $e^i_1,...,e^i_d$ of $L_i$. Then the respective multihomogeneous coordinates on $P$ are 
$$\{p^{(i)}_{l_1,...,l_{d_j}}=e^i_{l_1}\wedge\cdots\wedge e^i_{l_{d_j}}:1\leq l_1<\cdots <l_{d_j}\leq d\}_{1\leq j\leq m}.$$
Fix a reference lattice $L$ and basis $e_1,...,e_d$. We define $p_{l_1,...,l_{d_j}}$ similarly as above and let $A^i_j$ be the matrix such that $A^i_j\mathbf p_{l_1,...,l_{d_j}}=\mathbf p^{(i)}_{l_1,...,l_{d_j}}$. Then $M_{\underline d} (\Gamma)$ is cut out in $\prod_{[L]\in \Gamma}P$ by the ideal $I_M=\alpha \cap R[...,p^{(i)}_{l_1,...,l_{d_j}},...]$, where $\alpha $ is is the ideal generated over $K$ by all $2\times 2$-minors of the matrices 
$$\begin{bmatrix}
&\vdots&\\\cdots &A^i_jp^{(i)}_{l_1,...,l_{d_j}} &\cdots\\&\vdots&
\end{bmatrix}$$
whose rows are parametrized by $I$ and columns are parametrized by the coordinates of $\mathbb P^{{d\choose d_j}-1}_R$ for all $1\leq j\leq m$, and the ideal generated by the equations cutting out the product of flag schemes $\prod_{[L]\in \Gamma}\mathrm{Flag}_{\underline d}(\Gamma)$.

The gap in this description is that, intuitively, $M_{\underline d}(\Gamma)$ should be by definition a scheme over $R$ that is the closure of its generic fiber; however, the description above only guarantees that $M_{\underline d}(\Gamma)$ is a closed subscheme of such a scheme (cut out by the equations of those flag schemes). In other words, instead of looking at the ideal generated by $I_M$ and the equations cutting out $\prod_{[L]\in \Gamma}\mathrm{Flag}_{\underline d}(\Gamma)$, as above, one should look at the ideal $\alpha'\cap R[...,p^{(i)}_{l_1,...,l_{d_j}},...]$, where $\alpha'$ is the ideal generated by $\alpha$ and the equations (over $K$) cutting out $\prod_{[L]\in \Gamma}\mathrm{Flag}_{\underline d}(V)$.
We illustrate by an example that these two ideals do not always cut out the same subscheme of $\prod_{[L]\in \Gamma}P$.

\begin{ex}\label{ex:counter-example of grotz} (The same example is also discussed in \cite[Remark 2.25]{gora2019local}.)

Consider $d=4$ and  $I=\{1,2\}$. Take a basis $e_1,e_2,e_3,e_4$ of $V$ and let 
$$L_1=\bigoplus_{i=1}^4Re_i\mathrm{\ and\ }L_2=\pi^{-1}Re_1\oplus\bigoplus_{i=2}^4Re_i.$$
Set $m=1$ and $d_1=2$. We have (see Notation \ref{nt:morphisms in a configuration})
$$F_{1,2}=\begin{bmatrix}
\pi&&&\\ &1 &&\\&&1&\\&&&1
\end{bmatrix}\mathrm{\ and\ } 
F_{2,1}=\begin{bmatrix}
1&&&\\ &\pi &&\\&&\pi&\\&&&\pi
\end{bmatrix}.$$
Straightforward calculation shows that the point $(x_1,x_2)\in \Gr(2,4)_\kappa\times \Gr(2,4)_\kappa$ where 
$$x_1=\begin{bmatrix}
1&\\ &1 \\&\\&
\end{bmatrix}
\mathrm{\ and\ } 
x_2=
\begin{bmatrix}
&\\ 1 &\\&1\\1&
\end{bmatrix}$$
is not contained in $LG_2(\Gamma)$, let alone $M_2(\Gamma)$.

On the other hand, after passing to the Pl\"ucker embedding we have $$x_1=(1,0,0,0,0,0)\mathrm{\ and\ } x_2=(0,0,0,1,0,1)$$ and, setting $L=L_2$, we have $A^1_1=\mathrm{diag}(\pi,\pi,\pi,1,1,1)$ and $A^2_1=\mathrm{Id}.$ It is now easy to check that $(x_1,x_2)\in \mathbb P^6_\kappa\times \mathbb P^6_\kappa$ is contained in the subscheme defined by $I_M$ since $A^1_1x_1=(0,...,0)$ on $\kappa$, which provides a contradiction to H\"abich's claim.
\end{ex}

\bibliographystyle{amsalpha}
\bibliography{myrefs}

\end{document}